\title[Endomorphisms of Artin groups of type D]{Endomorphisms of Artin groups of type D}
\author[F Castel]{Fabrice Castel}
\address{Fabrice Castel, IMB, UMR 5584, CNRS, Universit\'e de Bourgogne, 21000 Dijon, France
\vskip 3 pt
Luis Paris, IMB, UMR 5584, CNRS, Universit\'e de Bourgogne, 21000 Dijon, France
}
\email{fabrice.castel@gmail.com}
\author[L Paris]{Luis Paris}
\email{lparis@u-bourgogne.fr}
\newtheorem{thm}{Theorem}[section]
\newtheorem{lem}[thm]{Lemma}
\newtheorem{prop}[thm]{Proposition}
\newtheorem{corl}[thm]{Corollary}
\theoremstyle{definition}
\newtheorem*{rem}{Remark}
\newtheorem*{acknow}{Acknowledgments}
\numberwithin{equation}{section}
\renewcommand{\thefigure}{\ifnum \c@section>\z@ \thesection.\fi
 \@arabic\c@figure}
\begin{document}

\def\N{\mathbb N} \def\Aut{{\rm Aut}} \def\ad{{\rm ad}}
\def\Z{\mathbb Z}\def\id{{\rm id}} \def\Ker{{\rm Ker}}
\def\Inn{{\rm Inn}} \def\Out{{\rm Out}} \def\PP{\mathcal P}
\def\Homeo{{\rm Homeo}} \def\MM{\mathcal M} \def\CC{\mathcal C}
\def\AA{\mathcal A} \def\S{\mathbb S} \def\SS{\mathcal S}
\def\Im{{\rm Im}} \def\D{\mathbb D}


\begin{abstract}
In this paper we determine a classification of the endomorphisms of the Artin group $A [D_n]$ of type $D_n$ for $n\ge 6$.
In particular we determine its automorphism group and its outer automorphism group.
We also determine a classification of the homomorphisms from $A[D_n]$ to the Artin group $A [A_{n-1}]$ of type $A_{n-1}$ and a classification of the homomorphisms from $A[A_{n-1}]$ to $A[D_n]$ for $n\ge 6$.
We show that any endomorphism of the quotient $A [D_n] / Z (A [D_n])$ lifts to an endomorphism of $A [D_n]$ for $n \ge 4$.
We deduce  a classification of the endomorphisms of $A [D_n] / Z (A [D_n])$, we determine the automorphism and outer automorphism groups of $A [D_n] / Z (A [D_n])$, and we show that $A [D_n] / Z (A [D_n])$ is co-Hopfian, for $n \ge 6$.
The results are algebraic in nature but the proofs are based on topological arguments (curves on surfaces and mapping class groups).

\smallskip\noindent
{\bf AMS Subject Classification\ \ } 
Primary: 20F36.
Secondary: 57K20.

\smallskip\noindent
{\bf Keywords\ \ } 
Artin groups of type $D$, endomorphisms, automorphisms, mapping class groups.

\end{abstract}

\maketitle


\section{Introduction}\label{sec1}

Let $S$ be a finite set.
A \emph{Coxeter matrix} over $S$ is a square matrix $M=(m_{s,t})_{s,t\in S}$ indexed by the elements of $S$, with coefficients in $\N\cup\{\infty\}$, such that $m_{s,s}=1$ for all $s\in S$ and $m_{s,t}=m_{t,s}\ge 2$ for all $s,t\in S$, $s\neq t$.
Such a matrix is usually represented by a labeled graph $\Gamma$, called a \emph{Coxeter graph}, defined as follows.
The set of vertices of $\Gamma$ is $S$.
Two vertices $s,t\in S$ are connected by an edge if $m_{s,t}\ge 3$, and this edge is labeled with $m_{s,t}$ if $m_{s,t}\ge4$.

If $a,b$ are two letters and $m$ is an integer $\ge2$, then we denote by $\Pi(a,b,m)$ the word $aba\cdots$ of length $m$.
In other words $\Pi(a,b,m)=(ab)^{\frac{m}{2}}$ if $m$ is even and $\Pi(a,b,m)=(ab)^{\frac{m-1}{2}}a$ if $m$ is odd.
Let $\Gamma$ be a Coxeter graph and let $M=(m_{s,t})_{s,t\in S}$ be its Coxeter matrix.
With $\Gamma$ we associate a group $A[\Gamma]$, called the \emph{Artin group} of $\Gamma$, defined by the following presentation.
\[
A[\Gamma]=\langle S\mid\Pi(s,t,m_{s,t})=\Pi(t,s,m_{s,t})\text{ for }s,t\in S\,,\ s\neq t\,,\ m_{s,t}\neq\infty\rangle\,.
\]
The \emph{Coxeter group} of $\Gamma$, denoted $W[\Gamma]$, is the quotient of $A[\Gamma]$ by the relations $s^2=1$, $s\in S$.

Despite the popularity of Artin groups little is known on their automorphisms and even less on their endomorphisms.
The most emblematic cases are the braid groups and the right-angled Artin groups.
Recall that the \emph{braid group} on $n+1$ strands is the Artin group $A[A_n]$ where $A_n$ is the Coxeter graph depicted in Figure \ref{fig1_1}, and an Artin group $A[\Gamma]$ is called a \emph{right-angled Artin group} if $m_{s,t}\in\{2,\infty\}$ for all $s,t\in S$, $s\neq t$.
The automorphism group of $A[A_n]$ was determined by Dyer--Grossman \cite{DyeGro1} and the set of its endomorphisms by Castel \cite{Caste2} for $n\ge 5$, by Chen--Kordek--Margalit \cite{ChKoMa1} for $n\ge 3$ and by Orevkov \cite{Orevk1} for $n \ge 2$ (see also Bell--Margalit \cite{BelMar1} and Kordek--Margalit \cite{KorMar1}).
On the other hand there are many articles studying automorphism groups of right-angled Artin groups (see Charney--Vogtmann \cite{ChaVog1,ChaVog2}, Day \cite{Day1,Day2}, Laurence \cite{Laure1} and Bregman--Charney--Vogtmann \cite{BrChVo1} for example), but almost nothing is known on endomorphisms of these groups.

\begin{figure}[ht!]
\begin{center}
\includegraphics[width=3.2cm]{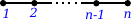}
\caption{Coxeter graph $A_n$}\label{fig1_1}
\end{center}
\end{figure}

Apart from these two families little is known on automorphisms of Artin groups.
The automorphism groups of two generators Artin groups were determined in Gilbert--Howie--Metaftsis--Raptis \cite{GiHoMeRa1}, the automorphism groups of the Artin groups of type $B_n$, $\tilde A_n$ and $\tilde C_n$ were determined in Charney--Crisp \cite{ChaCri1}, 
the automorphisms groups of some $2$-dimensional Artin groups were determined in Crisp \cite{Crisp1}
and in An--Cho \cite{AnCho1}, the automorphism groups of large-type free-of-infinity Artin groups were determined in Vaskou \cite{Vasko1}, and the automorphism group of $A[D_4]$ was determined in Soroko \cite{Sorok1}.
On the other hand, as far as we know the set of endomorphisms of an Artin group is not determined for any Artin group except for those of type $A_n$.

Recall that an Artin group $A[\Gamma]$ is of \emph{spherical type} if $W[\Gamma]$ is finite.
The study of spherical-type Artin groups began in the early 1970s with works by Brieskorn \cite{Bries2,Bries1}, Brieskorn--Saito \cite{BriSai1} and Deligne \cite{Delig1}, works that marked in a way the beginning of the theory of Artin groups.
This family and that of right-angled Artin groups are the two most studied and best understood families of Artin groups and, obviously, any question on Artin groups first arises for Artin groups of spherical type and for right-angled Artin groups.
Here we are interested in Artin groups of spherical type and more particularly in those of type $D_n$.

An Artin group $A[\Gamma]$ is called \emph{irreducible} if $\Gamma$ is connected.
If $\Gamma_1,\dots,\Gamma_\ell$ are the connected components of $\Gamma$, then $A[\Gamma]=A[\Gamma_1]\times\cdots\times A[\Gamma_\ell]$ and $W[\Gamma]=W[\Gamma_1]\times\cdots\times W[ \Gamma_\ell]$.
In particular $A[\Gamma]$ is of spherical type if and only if $A[\Gamma_i]$ is of spherical type for all $i\in\{1,\dots,\ell\}$.
So, to classify Artin groups of spherical type it suffices to classify those which are irreducible.
Finite irreducible Coxeter groups and hence irreducible Artin groups of spherical type were classified by Coxeter \cite{Coxet2,Coxet1}.
There are four infinite families, $A_n$ ($n\ge1$), $B_n$ ($n\ge 2$), $D_n$ ($n\ge 4$) and $I_2(m)$ ($ m\ge 5$), and six ``sporadic'' groups, $E_6$, $E_7$, $E_8$, $F_4$, $H_3$ and $H_4$.
As mentioned above, the automorphism group of $A[\Gamma]$ for $\Gamma$ of type $A_n$ ($n\ge 1$), $B_n$ ($n\ge 2$) and $I_2 (m)$ ($m\ge 5$) is known.
The next step is therefore to understand the automorphism group of $A[D_n]$ for $n\ge 5$ (the case $\Gamma=D_4$ is known by Soroko \cite{Sorok1}).
The Coxeter graph $D_n$ is illustrated in Figure \ref{fig1_2}.

\begin{figure}[ht!]
\begin{center}
\includegraphics[width=4cm]{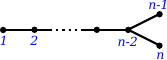}
\caption{Coxeter graph $D_n$}\label{fig1_2}
\end{center}
\end{figure}

In this paper we determine a complete and precise classification of the endomorphisms of $A[D_n]$ for $n\ge 6$ (see Theorem \ref{thm2_3}).
In particular we determine the automorphism group and the outer automorphism group of $A[D_n]$ for $n\ge6$ (see Corollary \ref{corl2_6}).
We also determine a complete and precise classification of the homomorphisms from $A[D_n]$ to $A[A_{n-1}]$ (see Theorem \ref{thm2_1}) and a complete and precise classification of the homomorphisms from $A[A_{n-1}]$ to $A[D_n]$ (see Theorem \ref{thm2_2}).
Note that all these results were announced but not proved in Castel \cite{Caste1}; 
actually the proofs turn out to be much more difficult than the first author thought when he announced them.
Note also that our techniques cannot be used to treat the cases $n=4$ and $n=5$.
In particular we do not know how to determine $\Aut(A[D_5])$.

From our main result we deduce a classification of the endomorphisms of $A [D_n] / Z (A [D_n])$ for $n \ge 6$, where $Z (A [D_n])$ denotes the center of $A [D_n]$ (see Theorem \ref{thmA2}). 
Then we determine the automorphism group and the outer automorphism group of $A [D_n] / Z (A [D_n])$ (see Corollary \ref{corlA3}), and we show that $A [D_n] / Z (A [D_n])$ is co-Hopfian (see Corollary \ref{corlA4}). 
These results follow from Theorem \ref{thm2_3} and Proposition \ref{propA1} which states that any endomorphism of $A [D_n] / Z (A [D_n])$ lifts to an endomorphism of $A [D_n]$. 
Such results were previously known for braid groups, that is, Artin groups of type $A_n$ (see Bell--Margalit \cite{BelMar1}). 
Note that the application of our main result to the study of $A [D_n] / Z (A [D_n])$ was not present in an earlier version of the paper.
It was suggested to us by the referee and we thank her/him warmly for that.

A \emph{geometric representation} of an Artin group is a homomorphism from the group to a  mapping class group (see Section \ref{sec3} for more details).
In order to achieve our goals we make a study of a particular geometric representation of $A[D_n]$ previously introduced by Perron--Vannier \cite{PerVan1} with one boundary component replaced by a puncture.
This geometric representation will be the key tool for many of our proofs.
Overall, although the results of the paper are algebraic in nature, the proofs are mostly based on topological arguments (on curves on surfaces and mapping class groups).

The paper is organized as follows.
In Section \ref{sec2} we give the main definitions and precise statements of the main results.
Section \ref{sec3} is dedicated to the study of some geometric representations of Artin groups of type $A_n$ and type $D_n$.
In Section \ref{sec4} we determine the homomorphisms from $A[D_n]$ to $A[A_{n-1}]$, in Section \ref{sec5} we determine the homomorphisms from $A[A_{n-1}]$ to $ A[D_n]$, and in Section \ref{sec6} we determine the endomorphisms of $A[D_n]$.
In Section \ref{sec7} we determine the endomorphisms of $A [D_n] / Z (A [D_n])$.

\begin{acknow}
The authors would like to thank Bruno Cisneros de la Cruz and Juan Gonz\'alez-Meneses for helpful comments and conversations.
They also want to thank the referee for many helpful remarks.
The second author is partially supported by the French project ``AlMaRe'' (ANR-19-CE40-0001-01) of the ANR.
\end{acknow}


\section{Definitions and statements}\label{sec2}

For $n\ge4$ we denote by $s_1,\dots,s_{n-1}$ the standard generators of $A[A_{n-1}]$ numbered as in Figure \ref{fig1_1} and by $t_1,\dots,t_n$ the standard generators of $A[D_n]$ numbered as in Figure \ref{fig1_2}.

Let $\Gamma$ be a Coxeter graph.
For $X\subset S$ we denote by $A_X=A_X[\Gamma]$ the subgroup of $A=A[\Gamma]$ generated by $X$, by $W_X=W_X[\Gamma]$ the subgroup of $W=W[\Gamma]$ generated by $X$, and by $\Gamma_X$ the full subgraph of $\Gamma$ spanned by $X$.
We know from van der Lek \cite{Lek1} that $A_X$ is the Artin group of $\Gamma_X$ and from Bourbaki \cite{Bourb1} that $W_X$ is the Coxeter group of $\Gamma_X$.
A subgroup of the form $A_X$ is called a \emph{standard parabolic subgroup} of $A$ and a subgroup of the form $W_X$ is called a \emph{standard parabolic subgroup} of $W$.

For $w\in W$ we denote by $\lg(w)$ the word length of $w$ with respect to $S$.
A \emph{reduced expression} for $w$ is an expression $w=s_1s_2\cdots s_\ell$ of minimal length, that is, such that $\ell=\lg(w)$.
Let $\omega:A\to W$ be the natural epimorphism which sends $s$ to $s$ for all $s\in S$.
This epimorphism has a natural set-section $\tau:W\to A$ defined as follows.
Let $w\in W$ and let $w=s_1s_2\cdots s_\ell$ be a reduced expression for $w$.
Then $\tau(w)=s_1s_2\cdots s_\ell\in A$.
We know from Tits \cite{Tits1} that the definition of $\tau(w)$ does not depend on the choice of its reduced expression.

Assume $\Gamma$ is of spherical type.
Then $W$ has a unique element of maximal length, denoted $w_S$, which satisfies $w_S^2=1$ and $w_SSw_S=S$.
The \emph{Garside element} of $A$ is defined to be $\Delta=\Delta[\Gamma]=\tau(w_S)$.
We know that $\Delta S\Delta^{-1}=S$ and, if $\Gamma$ is connected, then the center $Z(A)$ of $A$ is an infinite cyclic group generated by either $\Delta$ or $\Delta^2$ (see Brieskorn--Saito \cite{BriSai1}).
For $X\subset S$ we denote by $w_X$ the element of maximal length in $W_X$ and by $\Delta_X=\Delta_X[\Gamma]=\tau(w_X)$ the Garside element of $A_X$.

If $\Gamma=A_{n-1}$, then
\[
\Delta=(s_{n-1}\cdots s_1)(s_{n-1}\cdots s_2)\cdots(s_{n-1}s_{n-2})s_{n-1}\,,
\]
$\Delta s_i\Delta^{-1}=s_{n-i}$ for all $1\le i\le n-1$ and $Z(A)$ is generated by $\Delta^2$.
If $\Gamma=D_n$, then
\[
\Delta=(t_1\cdots t_{n-2}t_{n-1}t_nt_{n-2}\cdots t_1)(t_2\cdots t_{n-2}t_{n-1}t_nt_{n-2 }\cdots t_2)
\cdots(t_{n-2}t_{n-1}t_nt_{n-2})(t_{n-1}t_n)\,.
\]
If $n$ is even, then $\Delta t_i\Delta^{-1}=t_i$ for all $1\le i\le n$ and $Z(A)$ is generated by $\Delta$.
If $n$ is odd, then $\Delta t_i\Delta^{-1}=t_i$ for all $1\le i\le n-2$, $\Delta t_{n-1}\Delta^{-1}=t_n$, $\Delta t_n\Delta^{-1}=t_{n-1}$, and $Z(A)$ is generated by $\Delta^2$.

If $G$ is a group and $g\in G$, then we denote by $\ad_g:G\to G$, $h\mapsto ghg^{-1}$, the conjugation map by $g$.
We say that two homomorphisms $\varphi_1,\varphi_2:G\to H$ are \emph{conjugate} if there exists $h\in H$ such that $\varphi_2=\ad_h\circ\varphi_1$.

A homomorphism $\varphi:G\to H$ is called \emph{abelian} if its image is an abelian subgroup of $H$.
A homomorphism $\varphi:G\to H$ is called \emph{cyclic} if its image is a cyclic subgroup of $H$.
If $G=A[A_{n-1}]$, then $\varphi:A[A_{n-1}]\to H$ is abelian if and only if it is cyclic, if and only if there exists $h\in H$ such that $\varphi(s_i)=h$ for all $1\le i\le n-1$.
Similarly, if $G=A[D_n]$, then $\varphi:A[D_n]\to H$ is abelian if and only if it is cyclic, if and only if there exists $h\in H$ such that $\varphi(t_i)=h$ for all $1\le i\le n$.

Two automorphisms $\zeta,\chi\in\Aut(A[D_n])$ play a central role in our study.
These are defined by
\begin{gather*}
\zeta(t_i)=t_i\text{ for }1\le i\le n-2\,,\ \zeta(t_{n-1})=t_n\,,\ \zeta(t_n)=t_{n-1}\,,\\
\chi(t_i)=t_i^{-1}\text{ for } 1\le i\le n\,.
\end{gather*}
Both are of order $2$ and commute, hence they generate a subgroup of $\Aut(A[D_n])$ isomorphic to $\Z/2\Z\times\Z/2\Z$.
If $n$ is odd, then $\zeta$ is the conjugation map by $\Delta=\Delta[D_n]$.
On the other hand, if $n$ is even, then $\zeta$ is not an inner automorphism (see Paris \cite{Paris1}).
The automorphism $\chi$ is never inner.

Two other homomorphisms play an important role in our study.
The first, $\pi:A[D_n]\to A[A_{n-1}]$, is defined by
\[
\pi(t_i)=s_i\text{ for }1\le i\le n-2\,,\ \pi(t_{n-1})=\pi(t_n)=s_{n-1}\,.
\]
The second, $\iota:A[A_{n-1}]\to A[D_n]$, is defined by
\[
\iota(s_i)=t_i\text{ for }1\le i\le n-1\,.
\]
Observe that $\pi\circ\iota=\id_{A[A_{n-1}]}$, hence $\pi$ is surjective, $\iota$ is injective, and $A[D_n]\simeq\Ker(\pi)\rtimes A[A_{n-1}]$.
We refer to Crisp--Paris \cite{CriPar1} for a detailed study on this decomposition of $A[D_n]$ as a semi-direct product.

Let $n\ge 4$.
For $p\in\Z$ we define a homomorphism $\alpha_p:A[D_n]\to A[A_{n-1}]$ by
\[
\alpha_p(t_i)=s_i\Delta^{2p}\text{ for }1\le i\le n-2\,,\ \alpha_p(t_{n-1})=\alpha_p(t_n)=s_{n-1}
\Delta^{2p}\,,
\]
where $\Delta=\Delta[A_{n-1}]$ is the Garside element of $A[A_{n-1}]$.
Note that $\alpha_0=\pi$.

Set $Y=\{t_1,\dots,t_{n-1}\}$.
For $p,q\in\Z$ we define a homomorphism $\beta_{p,q}:A[A_{n-1}]\to A[D_n]$ by
\[
\beta_{p,q}(s_i)=t_i\Delta_Y^{2p}\Delta^{\kappa q}\text{ for }1\le i\le n-1\,,
\]
where $\Delta=\Delta[D_n]$ is the Garside element of $A[D_n]$, $\Delta_Y=\Delta_Y[D_n]$, $\kappa=2$ if $n$ is odd, and $\kappa=1$ if $n$ is even.
Note that $\beta_{0,0}=\iota$.
Note also that, by Paris \cite[Theorem 1.1]{Paris1}, the centralizer of $Y$ in $A[D_n]$ is the free abelian group of rank $2$ generated by $\Delta_Y^2$ and $\Delta^\kappa$.

For $p\in\Z$ we define the homomorphism $\gamma_p:A[D_n]\to A[D_n]$ by
\[
\gamma_p(t_i)=t_i\Delta^{\kappa p}\text{ for }1\le i\le n\,,
\]
where $\Delta=\Delta[D_n]$ is the Garside element of $A[D_n]$, $\kappa=2$ if $n$ is odd, and $\kappa=1$ if $n$ is even.
Note that $\gamma_0=\id$.

Concerning $A [A_{n-1}]$, we define an automorphism $\bar\chi:A[A_{n-1}]\to A[A_{n-1}]$ by
\[
\bar\chi(s_i)=s_i^{-1}\text{ for }1\le i\le n-1\,,
\]
and for $p\in\Z$ we define an endomorphism $\bar\gamma_p:A[A_{n-1}]\to A[A_{n-1}]$ by
\[
\bar\gamma_p(s_i)=s_i\Delta^{2p}\text{ for }1\le i\le n-1\,,
\]
where $\Delta$ is the Garside element of $A[A_{n-1}]$.

The main results of this paper are the following.

\begin{thm}\label{thm2_1}
Let $n\ge 5$.
Let $\varphi:A[D_n]\to A[A_{n-1}]$ be a homomorphism.
Then up to conjugation we have one of the following two possibilities.
\begin{itemize}
\item[(1)]
$\varphi$ is cyclic.
\item[(2)]
There exist $\psi\in\langle\chi\rangle$ and $p\in\Z$ such that $\varphi=\alpha_p\circ\psi$.
\end{itemize}
\end{thm}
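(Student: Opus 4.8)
The plan is to reduce the problem to the already-understood endomorphisms of $A[A_{n-1}]$ by restricting $\varphi$ to a suitable parabolic subgroup, and then to pin down the image of the last generator $t_n$ by a centralizer computation carried out inside the geometric (mapping class group) representation. First I would restrict $\varphi$ to the standard parabolic subgroup $A_Y$ with $Y=\{t_1,\dots,t_{n-1}\}$, which is a copy of $A[A_{n-1}]$ via $t_i\mapsto s_i$. The resulting endomorphism $\varphi_Y$ of $A[A_{n-1}]$ is classified by the known description of the endomorphisms of $A[A_{n-1}]$ (Castel, Chen--Kordek--Margalit, Orevkov), valid in our range: up to conjugation $\varphi_Y$ is cyclic, or equals $\bar\gamma_p$, or equals $\bar\chi\circ\bar\gamma_p$ for some $p\in\Z$. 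If $\varphi_Y$ is cyclic, then $\varphi(t_1)=\dots=\varphi(t_{n-1})=h$ for a single element $h$, and the two relations satisfied by $t_n$ (it commutes with $t_{n-1}$ and braids with $t_{n-2}$) give, inside the abelian subgroup generated by $h$ and $\varphi(t_n)$, the identity $\varphi(t_n)^2h=h^2\varphi(t_n)$, which forces $\varphi(t_n)=h$. Hence $\varphi$ is cyclic and we are in case~(1).

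In the non-cyclic case I would use the freedom to conjugate and to replace $\varphi$ by $\varphi\circ\psi$ with $\psi\in\langle\chi\rangle$. Since $\chi$ restricts to $\bar\chi$ on $A_Y$ and $\bar\chi$ conjugates $\bar\chi\circ\bar\gamma_p$ to $\bar\gamma_p$ (using $\bar\chi(\Delta^{2p})=\Delta^{-2p}$), after such a composition I may assume $\varphi_Y=\bar\gamma_p$, that is, $\varphi(t_i)=s_i\Delta^{2p}$ for $1\le i\le n-1$, where $\Delta=\Delta[A_{n-1}]$. It then only remains to determine $u:=\varphi(t_n)$ and to show that $u=s_{n-1}\Delta^{2p}$, which gives exactly $\varphi=\alpha_p$ and, restoring the omitted $\psi$, the form $\varphi=\alpha_p\circ\psi$ claimed in case~(2).

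The element $u$ is constrained by the relations of $D_n$ involving $t_n$: since $\Delta^2$ is central, $u$ must commute with $s_1,\dots,s_{n-3}$ and with $s_{n-1}$, and must satisfy the braid relation with $s_{n-2}$. The main work, and the step I expect to be the real obstacle, is to show that these commutation and braid conditions are rigid enough to force $u$ to be $s_{n-1}$ up to a central factor. Here I would pass to the geometric representation of $A[A_{n-1}]$ in the mapping class group of the punctured disk and analyse the centralizer of the reducible system generated by $s_1,\dots,s_{n-3},s_{n-1}$: commuting with $\langle s_1,\dots,s_{n-3}\rangle$ confines $u$ to the centralizer of the parabolic of type $A_{n-3}$ supported on the first strands (a three-object braid group obtained by collapsing those strands to a single fat puncture, together with the full twist on them), and commuting additionally with $s_{n-1}$ restricts $u$ to the subgroup generated by $s_{n-1}$ and the relevant full twists. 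A short analysis of the braid relation with $s_{n-2}$, the half-twist across the boundary between the fat puncture and the $(n-1)$st strand, then eliminates every candidate except $u=s_{n-1}\Delta^{2q}$: full twists and higher powers of $s_{n-1}$ commute with or fail the braid relation with $s_{n-2}$, whereas the half-twist $s_{n-1}$ (whose arc shares the endpoint $n-1$ with the arc of $s_{n-2}$) satisfies it.

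Finally, imposing the braid relation between $u=s_{n-1}\Delta^{2q}$ and $\varphi(t_{n-2})=s_{n-2}\Delta^{2p}$ and comparing the central exponents on the two sides forces $p+2q=2p+q$, hence $q=p$. Therefore $u=s_{n-1}\Delta^{2p}$ and $\varphi=\alpha_p$, which completes case~(2). The difficulty is concentrated entirely in the third paragraph, where the geometric representation developed earlier is the essential tool, since it converts the purely algebraic commutation and braid conditions on $u$ into tractable statements about the supports and intersection patterns of arcs and curves.
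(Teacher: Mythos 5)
Your overall strategy is the same as the paper's: restricting $\varphi$ to the parabolic $A_Y\cong A[A_{n-1}]$ is exactly precomposing with $\iota$, you then invoke the classification of endomorphisms of $A[A_{n-1}]$ (Theorem \ref{thm3_4}), dispose of the cyclic case, normalize to $\varphi\circ\iota=\bar\gamma_p$, and locate $u=\varphi(t_n)$ in the centralizer of $\langle s_1,\dots,s_{n-3},s_{n-1}\rangle$, which is indeed the rank-three free abelian group generated by $s_{n-1}$, $\Delta_Y^2$ (with $Y=\{s_1,\dots,s_{n-3}\}$) and $\Delta^2$; the paper cites Paris \cite[Theorem 5.1]{Paris2} for this rather than re-deriving it geometrically. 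Your cyclic-case computation (commutation with $\varphi(t_{n-1})$ plus the braid relation with $\varphi(t_{n-2})$ forces $\varphi(t_n)=h$) is a correct variant of the paper's argument, and your final central-exponent comparison giving $q=p$ is also correct once $u=s_{n-1}\Delta^{2q}$ is known.

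The gap is in the step you yourself call the crux: eliminating the candidates $u=s_{n-1}^{k_1}\Delta_Y^{2k_2}\Delta^{2k_3}$ with $k_2\neq 0$ or $k_1\neq 1$. Your justification, that ``full twists and higher powers of $s_{n-1}$ commute with or fail the braid relation with $s_{n-2}$,'' is an assertion rather than an argument, and it is inaccurate as stated: $\Delta_Y^2$ does not commute with $s_{n-2}$, and ``fails the braid relation'' is precisely what must be proved. There are two real difficulties. First, the relation coming from $t_{n-2}t_nt_{n-2}=t_nt_{n-2}t_n$ involves the central factors, so a priori it could hold ``up to a power of $\Delta^2$'' even when the non-central parts do not braid; abelianization alone cannot rule this out (e.g.\ $k_1=1+n(n-1)$ with one compensating factor of $\Delta^2$ passes the abelianized test), and the paper kills this possibility with the nontrivial fact that $A_{\{s_{n-2},s_{n-1}\}}\cap\langle\Delta\rangle=\{1\}$ (Paris \cite[Corollary 2.6]{Paris3}). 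Second, the $\Delta_Y^2$-component is not eliminated in the paper by the braid relation at all, but by conjugacy plus essential reduction systems: since $t_n^2$ is conjugate to $t_1^2$ in $A[D_n]$, $u^2$ is conjugate to $s_1^2\Delta^{4p}$, whose essential reduction system is a single curve enclosing two punctures, whereas by Proposition \ref{prop3_2} the system $\SS(u^2)$ contains the curve $c_2$ enclosing $n-2\ge 3$ punctures whenever $k_2\neq 0$; Theorem \ref{thm3_1} then forces $k_2=0$ and $k_1\neq 0$. (Your braid-relation intuition can be salvaged along the same lines: the relation $uvu=vuv$ with $v=\varphi(t_{n-2})$ implies $u=(vu)^{-1}v(vu)$ is conjugate to $v$, and the identical reduction-system comparison applies; but some argument of this kind, not a ``short analysis,'' is indispensable.) These two ingredients are exactly what your sketch is missing.
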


\begin{thm}\label{thm2_2}
Let $n\ge6$.
Let $\varphi:A[A_{n-1}]\to A[D_n]$ be a homomorphism.
Then up to conjugation we have one of the following two possibilities.
\begin{itemize}
\item[(1)]
$\varphi$ is cyclic.
\item[(2)]
There exist $\psi\in\langle\zeta,\chi\rangle$ and $p,q\in\Z$ such that $\varphi=\psi\circ\beta_{p,q}$.
\end{itemize}
\end{thm}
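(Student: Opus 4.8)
The plan is to reduce the problem to a question about a mapping class group by means of the geometric representation of $A[D_n]$ studied in Section \ref{sec3}, and then to appeal to the classification of homomorphisms from a braid group into a mapping class group. Let $\rho:A[D_n]\to\MM$ be the Perron--Vannier type geometric representation, sending each standard generator $t_i$ to the Dehn twist $T_{a_i}$ about a simple closed curve $a_i$, where the curves $a_1,\dots,a_n$ realize the $D_n$ configuration on the underlying surface $\Sigma$ and where the kernel of $\rho$ is under control from Section \ref{sec3} (it is contained in the center, the central twist having been destroyed by replacing a boundary component with a puncture). Given a homomorphism $\varphi:A[A_{n-1}]\to A[D_n]$, I would consider the composite $\Phi=\rho\circ\varphi:A[A_{n-1}]\to\MM$, a homomorphism from the braid group on $n$ strands into $\MM$.

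Since $n\ge6$, the braid group $A[A_{n-1}]$ has enough generators for the rigidity results of Section \ref{sec3} to apply to $\Phi$, yielding a dichotomy: either $\Phi$ is cyclic, or, up to conjugation in $\MM$ and a transvection (in the sense of Section \ref{sec3}), one has $\Phi(s_i)=T_{c_i}^{\varepsilon}z$ for $1\le i\le n-1$, where $(c_1,\dots,c_{n-1})$ is a chain of curves realizing the $A_{n-1}$ configuration, $\varepsilon\in\{1,-1\}$, and $z$ lies in the centralizer of $\{T_{c_1},\dots,T_{c_{n-1}}\}$ in $\MM$. In the first case, since the kernel of $\rho$ is central, the image $\varphi(A[A_{n-1}])$ is a central extension of a cyclic group, hence abelian; as an abelian homomorphism from $A[A_{n-1}]$ is cyclic, we land in possibility (1).

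The heart of the argument is the second case, where I would identify the chain $(c_1,\dots,c_{n-1})$ geometrically. One must classify the systems of $n-1$ curves realizing an $A_{n-1}$ configuration inside the surface $\Sigma$ carrying the $D_n$ configuration, up to the action of $\rho(A[D_n])$. I expect exactly two orbits, represented by the standard chains $(a_1,\dots,a_{n-1})$ and $(a_1,\dots,a_{n-2},a_n)$, which are exchanged by the diagram symmetry $\zeta$. This is precisely what upgrades the conjugation in $\MM$ furnished by Section \ref{sec3} into a conjugation inside $A[D_n]$, at the cost of possibly composing with $\zeta$, while the sign $\varepsilon$ is absorbed by possibly composing with $\chi$. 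After these normalizations, and using that $\Phi(s_i)=\rho(\varphi(s_i))$ already lies in $\rho(A[D_n])$, so that the transvecting element $z$ is $\rho(\tilde z)$ for some $\tilde z\in A[D_n]$, one obtains (after applying a suitable $\psi\in\langle\zeta,\chi\rangle$ and conjugating) that $\varphi(s_i)=t_i\tilde z$ for all $i$, where $\tilde z$ commutes with $t_1,\dots,t_{n-1}$, that is, $\tilde z$ lies in the centralizer of $Y=\{t_1,\dots,t_{n-1}\}$.

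Finally, by Paris \cite[Theorem 1.1]{Paris1}, recalled in Section \ref{sec2}, this centralizer is the free abelian group of rank $2$ generated by $\Delta_Y^2$ and $\Delta^{\kappa}$, so $\tilde z=\Delta_Y^{2p}\Delta^{\kappa q}$ for some $p,q\in\Z$; this identifies $\varphi$ with $\psi\circ\beta_{p,q}$ and yields possibility (2). The main obstacle is the geometric classification of the $A_{n-1}$-chains inside $\Sigma$ up to the action of $\rho(A[D_n])$: passing from \emph{conjugate in $\MM$} to \emph{conjugate in $A[D_n]$ up to $\zeta$} is exactly the step where the specific geometry of the Perron--Vannier surface for $D_n$, rather than soft group theory, must be exploited, and it is here that the hypothesis $n\ge6$ is genuinely needed.
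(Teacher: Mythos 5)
Your outer skeleton does match the paper's: you use the injectivity of $\rho_D$ (Theorem \ref{thm3_5}) to dispose of the cyclic case, you apply the rigidity result Theorem \ref{thm3_6} to $\rho_D\circ\varphi$ to produce a chain $c_1,\dots,c_{n-1}$ and a transvecting element, and you finish with Paris's centralizer theorem exactly as the paper does. But the step you yourself identify as the heart of the argument contains a genuine gap: the claim that the $A_{n-1}$-chains in $(\Sigma_n,x)$ form exactly two orbits under $\rho_D(A[D_n])$, represented by $(d_1,\dots,d_{n-1})$ and $(d_1,\dots,d_{n-2},d_n)$, is not only unproven but false. Indeed, $\rho_D(A[D_n])$ contains $\Ker(\theta)$ and $\theta(\rho_D(A[D_n]))=\rho_A(A[A_{n-1}])$, and both standard chains have the same $\theta$-image up to isotopy, namely the standard chain $(a_1,\dots,a_{n-1})$ of $\Sigma_n$, because $\theta(T_{d_{n-1}})=\theta(T_{d_n})=T_{a_{n-1}}$. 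Hence any chain in one of your two proposed orbits must, after forgetting the puncture, be $\rho_A(A[A_{n-1}])$-equivalent to $(a_1,\dots,a_{n-1})$. But $\rho_A(A[A_{n-1}])$ is the Birman--Hilden symmetric subgroup of $\MM(\Sigma_n)$: its elements admit representatives commuting with the hyperelliptic involution $\sigma$ of the branched double cover $\Sigma_n\to\D$, so they carry $\sigma$-invariant isotopy classes to $\sigma$-invariant isotopy classes, and the $a_i$ are $\sigma$-invariant. Since every non-separating curve of $\Sigma_n$ is $\MM(\Sigma_n)$-equivalent to $a_1$ while for these surfaces there exist non-separating curves $c$ with $i([c],[\sigma(c)])\neq 0$, there exist chains in $\Sigma_n\setminus\{x\}$ containing a curve that is not $\sigma$-invariant up to isotopy; such chains lie in neither of your two orbits. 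So the two-orbit classification fails for arbitrary chains, and no argument in your proposal supplies the constraint that would exclude these.

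That constraint is exactly what the paper extracts from the semidirect product projection $\pi:A[D_n]\to A[A_{n-1}]$, which your proposal never uses. The paper first applies braid-group rigidity (Theorem \ref{thm3_4}) to $\pi\circ\varphi$ and normalizes $\pi\circ\varphi=\bar\gamma_p$ up to conjugation and composition with $\chi$ (Lemma \ref{lem5_1} handling the cyclic subcase). Since $\theta\circ\rho_D=\rho_A\circ\pi$, this pins down $\theta\big((\rho_D\circ\varphi)(s_i)\big)=T_{a_i}U^p$ exactly, and the essential-reduction-system argument of the Claim in the paper's proof (using Theorem \ref{thm3_1}, Proposition \ref{prop3_2} and Lemma \ref{lem5_2}) then shows curve by curve that each $c_i$ furnished by Theorem \ref{thm3_6} is isotopic to $d_i$ in $\Sigma_n$ once the puncture is forgotten, and moreover that $\varepsilon=1$. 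Only for chains satisfying this strong constraint does a two-orbit statement hold, and proving it is still the bulk of the work (Lemmas \ref{lem5_3}--\ref{lem5_7}: bigon removal, induction on intersection numbers, analysis of the cut surface), yielding a conjugating element in $\Ker(\theta)=\rho_D(\Ker(\pi))$ by Theorem \ref{thm3_5}\,(1). From that point on, your concluding steps (transferring the conjugation into $A[D_n]$, absorbing the sign with $\chi$ and the choice of chain with $\zeta$, and invoking Paris \cite[Theorem 1.1]{Paris1}) go through as in the paper; without the input from $\pi$, however, the argument cannot be completed as outlined.
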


\begin{thm}\label{thm2_3}
Let $n\ge6$.
Let $\varphi:A[D_n]\to A[D_n]$ be a homomorphism.
Then up to conjugation we have one of the following three possibilities.
\begin{itemize}
\item[(1)]
$\varphi$ is cyclic.
\item[(2)]
There exist $\psi\in\langle\zeta,\chi\rangle$ and $p,q\in\Z$ such that $\varphi=\psi\circ\beta_{p,q}\circ\pi$.
\item[(3)]
There exist $\psi\in\langle\zeta,\chi\rangle$ and $p\in\Z$ such that $\varphi=\psi\circ\gamma_p$.
\end{itemize}
\end{thm}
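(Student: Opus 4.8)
The plan is to leverage the previously established classification of homomorphisms $A[A_{n-1}]\to A[D_n]$ (Theorem \ref{thm2_2}) together with the splitting $A[D_n]\simeq\Ker(\pi)\rtimes A[A_{n-1}]$, and to reduce the determination of an arbitrary endomorphism $\varphi:A[D_n]\to A[D_n]$ to a single rigidity statement about the image of $t_n$. The starting point is to restrict $\varphi$ to the standard parabolic subgroup $A_Y=\langle t_1,\dots,t_{n-1}\rangle$, which is isomorphic to $A[A_{n-1}]$ via $\iota$. Applying Theorem \ref{thm2_2} to $\varphi\circ\iota$ splits the argument into the cyclic case and the $\beta_{p,q}$ case.

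First I would dispose of the cyclic case. If $\varphi\circ\iota$ is cyclic, then there is $h\in A[D_n]$ with $\varphi(t_i)=h$ for $1\le i\le n-1$. Since $t_n$ commutes with $t_{n-1}$ and braids with $t_{n-2}$, the element $g=\varphi(t_n)$ must commute with $h$ and satisfy $ghg=hgh$; together these give $hg^2=h^2g$, forcing $g=h$, so $\varphi$ is cyclic and we are in case (1).

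In the remaining case, Theorem \ref{thm2_2} provides $g_0\in A[D_n]$, $\psi\in\langle\zeta,\chi\rangle$ and $p,q\in\Z$ with $\varphi\circ\iota=\ad_{g_0}\circ\psi\circ\beta_{p,q}$. Set $\varphi_2=\psi^{-1}\circ\ad_{g_0^{-1}}\circ\varphi$, so that $\varphi$ is conjugate to $\psi\circ\varphi_2$ and $\varphi_2\circ\iota=\beta_{p,q}$ exactly; thus $\varphi_2(t_i)=t_i\Delta_Y^{2p}\Delta^{\kappa q}$ for $1\le i\le n-1$. Since the front factor $\psi$ lies in $\langle\zeta,\chi\rangle$ and conjugation is harmless, it suffices to show that $\varphi_2$ equals $\beta_{p,q}\circ\pi$ or $\gamma_q$. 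Writing $u=\Delta_Y^{2p}$ and $z=\Delta^{\kappa q}$ (with $z$ central in $A[D_n]$), everything now reduces to the possible values of $g=\varphi_2(t_n)$.

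Because the only relations of $A[D_n]$ involving $t_n$ state that $t_n$ commutes with $t_1,\dots,t_{n-3},t_{n-1}$ and braids with $t_{n-2}$, the element $g$ is subject precisely to the conditions that $g$ commute with each $\varphi_2(t_i)=t_iuz$ for $i\in\{1,\dots,n-3,n-1\}$ (equivalently, cancelling the central $z$, with $t_iu$), and that $g\,\varphi_2(t_{n-2})\,g=\varphi_2(t_{n-2})\,g\,\varphi_2(t_{n-2})$ with $\varphi_2(t_{n-2})=t_{n-2}uz$. The main obstacle, and where I expect the real work to lie, is to prove that the only solutions are $g=t_{n-1}uz$ and, when $u=1$, $g=t_nz$: the first gives $\varphi_2(t_n)=\varphi_2(t_{n-1})$, hence $\varphi_2=\beta_{p,q}\circ\pi$ (case (2)), while the second gives $\varphi_2(t_i)=t_iz$ for all $i$, hence $\varphi_2=\gamma_q$ (case (3)); reinstating $\psi$ and the conjugation then yields the stated forms. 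I would establish this rigidity via the geometric representation of $A[D_n]$ into a mapping class group studied in Section \ref{sec3}: under it the $\varphi_2(t_i)$ become (twisted) Dehn twists along an explicit configuration of curves, and the commutation and braid constraints on $g$ translate into intersection and disjointness data that constrain the curve carrying $g$, and hence $g$ itself. A useful cross-check, and a partial substitute for the topological input, is to restrict $\varphi_2$ instead to the other $A[A_{n-1}]$-parabolic $A_{Y'}$ with $Y'=\{t_1,\dots,t_{n-2},t_n\}$ and apply Theorem \ref{thm2_2} again; this restriction is non-cyclic, so matching the two descriptions on the shared generators $t_1,\dots,t_{n-2}$ pins down $g$ at the cost of a comparable rigidity statement.
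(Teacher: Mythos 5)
Your skeleton is the same as the paper's: restrict along $\iota$, apply Theorem \ref{thm2_2} to $\varphi\circ\iota$, dispose of the cyclic case by an elementary computation (your argument there is correct), and reduce everything to determining $g=\varphi_2(t_n)$ via the geometric representation. The genuine gap is that the rigidity statement you propose to prove is false as stated, and once corrected it contains essentially all of the difficulty of the theorem. With your normalization $\varphi_2(t_i)=t_iuz$ for $1\le i\le n-1$ fixed on the nose (where $u=\Delta_Y^{2p}$, $z=\Delta^{\kappa q}$), the solution set for $g$ is strictly larger than $\{t_{n-1}uz\}\cup\{t_nz\}$. Indeed, for any $w$ in the centralizer of $Y$ (the rank-two free abelian group $\langle\Delta_Y^2,\Delta^\kappa\rangle$), the element $g=w^{-1}t_nzw$ satisfies all the required commutation and braid relations when $u=1$, since conjugation by $w$ fixes each $t_iz$; and $\Delta_Y^{-2}t_nz\Delta_Y^{2}$ equals neither $t_nz$ (as $\Delta_Y^2$ is not central in $A[D_n]$, it cannot commute with $t_n$) nor $t_{n-1}z$ (otherwise $t_n=\Delta_Y^2t_{n-1}\Delta_Y^{-2}=t_{n-1}$). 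Worse, when $n$ is odd there are solutions $g=v^{-1}t_{n-1}zv$ with $v$ commuting with $t_1,\dots,t_{n-2}$ and conjugating $t_n$ to $t_{n-1}$ (these come from the point-pushing element of Lemma \ref{lem6_1}\,(3)); they give $\varphi_2=\ad_{v^{-1}}\circ\zeta\circ\gamma_q$, a $\zeta$-twisted version of case (3) that your dichotomy misses entirely. Because your normalization has already spent the global conjugation freedom, only conjugations preserving $\varphi_2\circ\iota=\beta_{p,q}$ remain available, so these extra solutions cannot be waved away; the statement you actually need is a classification of $g$ up to this residual conjugation, including the $\zeta$-twist for odd $n$.

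Even with the statement repaired, proving it is the whole theorem, and your sketch leaves that untouched. The paper's proof splits into four cases ($n$ even or odd, $p$ zero or not) and requires: Theorem \ref{thm2_1} applied to $\pi\circ\varphi$, giving $(\pi\circ\varphi)(t_{n-1})=(\pi\circ\varphi)(t_n)$ and hence that the circle carrying $\varphi_2(t_n)$ is isotopic to $d_{n-1}$ in the unpunctured surface $\Sigma_n$ --- an input your outline never mentions; essential-reduction-system arguments (Theorem \ref{thm3_1}, Proposition \ref{prop3_2}) and a multitwist comparison to separate the relevant curves when $p\neq0$; the topological Lemmas \ref{lem5_5} and \ref{lem6_1}, which classify how such a circle can sit relative to the puncture; and, in the odd case with $p\neq0$, a second application of Theorem \ref{thm2_2} (to $\varphi\circ\zeta\circ\iota$, essentially your $Y'$ cross-check) combined with Paris's results on parabolic subgroups to force $v\Delta_Y^{2p}v^{-1}=\Delta_Y^{2p}$. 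In particular, the fact that $\gamma$-type solutions occur only when $p=0$ --- which you correctly guess --- is one of the harder points of the proof, not a byproduct of the setup. So the plan points in the right direction, but its key step is both misstated and unproved.
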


From Theorem \ref{thm2_3} we deduce a classification of the injective endomorphisms and of the automorphisms of $A[D_n]$ as follows.

\begin{corl}\label{corl2_4}
Let $n\ge6$.
Let $\varphi:A[D_n]\to A[D_n]$ be an endomorphism.
Then $\varphi$ is injective if and only if there exist $\psi\in\langle\zeta,\chi\rangle$ and $p\in\Z$ such that $\varphi$ is conjugate to $\psi\circ\gamma_p$.
\end{corl}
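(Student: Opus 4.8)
The plan is to exploit the fact that injectivity is invariant under conjugation, so that by Theorem \ref{thm2_3} it suffices to decide, among the three normal forms, which are injective and which are not. First I would record the basic facts I will use: the center $Z=Z(A[D_n])$ is infinite cyclic, generated by $z:=\Delta^\kappa$ (with $\kappa$ as in the definition of $\gamma_p$), and the quotient map $\rho:A[D_n]\to A[D_n]/Z$ has kernel exactly $Z$.

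For the ``if'' direction I must show that every $\psi\circ\gamma_p$ with $\psi\in\langle\zeta,\chi\rangle$ is injective. Since $\psi$ is an automorphism, this reduces to proving that $\gamma_p$ is injective. The key observation is that $\gamma_p$ modifies each generator only by the central element $z^p$, so $\rho\circ\gamma_p=\rho$; equivalently, $\gamma_p$ descends to the identity on $A[D_n]/Z$. It then remains to understand $\gamma_p$ on $Z$. Writing $\Delta$ as a positive word of length $L=n(n-1)$ and using centrality of $z$, I compute $\gamma_p(\Delta)=\Delta\,z^{pL}$, whence $\gamma_p(z)=z^{1+p\kappa L}$. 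As $L\ge 2$ and $\kappa\ge 1$, the integer $1+p\kappa L$ never vanishes, so $\gamma_p$ restricts to an injective endomorphism of $Z\cong\Z$. Injectivity of $\gamma_p$ then follows from a short five-lemma argument applied to the two copies of $1\to Z\to A[D_n]\to A[D_n]/Z\to 1$: if $\gamma_p(g)=1$, then $\rho(g)=\rho(\gamma_p(g))=1$, so $g\in Z$, and then $g=1$ by injectivity on $Z$.

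For the ``only if'' direction I would rule out the other two normal forms. A cyclic homomorphism (case (1)) has abelian image, so it cannot be injective because $A[D_n]$ is nonabelian (for instance $t_1t_2\ne t_2t_1$). For case (2), the map $\pi$ satisfies $\pi(t_{n-1})=\pi(t_n)=s_{n-1}$, so $t_{n-1}t_n^{-1}$ lies in $\ker\pi$ and hence in the kernel of $\psi\circ\beta_{p,q}\circ\pi$; since $t_{n-1}t_n^{-1}$ maps to the nontrivial element $t_{n-1}t_n\in W[D_n]$ it is itself nontrivial in $A[D_n]$, so this composite is not injective. As conjugation preserves (non)injectivity, an injective $\varphi$ cannot be conjugate to a map of type (1) or (2); hence by Theorem \ref{thm2_3} it is conjugate to some $\psi\circ\gamma_p$, which is exactly the claimed characterization.

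I expect the only genuinely substantive point to be the computation of $\gamma_p$ on the center, that is, the identity $\gamma_p(z)=z^{1+p\kappa L}$ together with the verification that $1+p\kappa L\ne 0$; everything else (conjugation-invariance, the nonabelian and kernel-of-$\pi$ observations, and the diagram chase) is routine once Theorem \ref{thm2_3} is in hand.
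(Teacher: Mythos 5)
Your proposal is correct and follows essentially the same route as the paper: both reduce to Theorem \ref{thm2_3}, dismiss cases (1) and (2) by exhibiting an obvious failure of injectivity, and prove injectivity of $\gamma_p$ by observing that it acts as the identity modulo the center and that a kernel element, being a power of $\Delta$, must be trivial because $1+\kappa p n(n-1)\neq 0$. Your five-lemma packaging of the last step via the central extension $1\to Z\to A[D_n]\to A[D_n]/Z\to 1$ is only a cosmetic rephrasing of the paper's direct computation with the degree homomorphism $z$, and your non-injectivity certificates (abelian image, respectively $t_{n-1}t_n^{-1}\in\Ker\pi$) are interchangeable with the paper's single observation that $\varphi(t_{n-1})=\varphi(t_n)$ in both excluded cases.
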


\begin{proof}
Let $\varphi:A[D_n]\to A[D_n]$ be an endomorphism.
By Theorem \ref{thm2_3} we have one of the following three possibilities up to conjugation.
\begin{itemize}
\item[(1)]
$\varphi$ is cyclic.
\item[(2)]
There exist $\psi\in\langle\zeta,\chi\rangle$ and $p,q\in\Z$ such that $\varphi=\psi\circ\beta_{p,q}\circ\pi$.
\item[(3)]
There exist $\psi\in\langle\zeta,\chi\rangle$ and $p\in\Z$ such that $\varphi=\psi\circ\gamma_p$.
\end{itemize}
If $\varphi$ is cyclic, then $\varphi(t_{n-1})=\varphi(t_n)$, hence $\varphi$ is not injective.
If there exist $\psi\in\langle\zeta,\chi\rangle$ and $p,q\in\Z$ such that $\varphi=\psi\circ\beta_{p,q}\circ\pi$, then, again, $\varphi(t_{n-1})=\varphi(t_n)$, hence $\varphi$ is not injective.
So, if $\varphi$ is injective, then there exist $\psi\in\langle\zeta,\chi\rangle$ and $p\in\Z$ such that $\varphi$ is conjugate to $\psi\circ\gamma_p$.

It remains to show that, if $\psi\in\langle\zeta,\chi\rangle$ and $p\in\Z$, then $\psi\circ\gamma_p$ is injective.
Since the elements of $\langle\zeta,\chi\rangle$ are automorphisms, it suffices to show that $\gamma_p$ is injective.
We denote by $z:A[D_n]\to\Z$ the homomorphism which sends $t_i$ to $1$ for all $1\le i\le n$.
It is easily seen that $\gamma_p(u)=u\Delta^{\kappa p\,z(u)}$ for all $u\in A[D_n]$.
Let $u\in\Ker(\gamma_p)$.
Then $1=\gamma_p(u)=u\Delta^{\kappa p\,z(u)}$, hence $u=\Delta^q$ where $q=-\kappa p\,z(u)$.
We have $z(\Delta)=n(n-1)$, hence $z(u)=qn(n-1)$, thus
\[
1=\gamma_p(u)=\Delta^q\Delta^{\kappa pqn(n-1)}=\Delta^{q(1+\kappa pn(n-1))}\,.
\]
Since $1+\kappa pn(n-1)\neq 0$, this equality implies that $q=0$, hence $u=1$.
So, $\gamma_p$ is injective.
\end{proof}

\begin{corl}\label{corl2_5}
Let $n\ge6$.
Let $\varphi:A[D_n]\to A[D_n]$ be an endomorphism.
Then $\varphi$ is an automorphism if and only if it is conjugate to an element of $\langle\zeta,\chi\rangle$.
\end{corl}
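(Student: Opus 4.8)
The plan is to reduce to Corollary \ref{corl2_4} and then decide exactly which of the maps $\psi\circ\gamma_p$ appearing there are surjective. Since an automorphism is in particular injective, Corollary \ref{corl2_4} already tells us that any automorphism $\varphi$ is conjugate to some $\psi\circ\gamma_p$ with $\psi\in\langle\zeta,\chi\rangle$ and $p\in\Z$, and conversely that every such $\psi\circ\gamma_p$ is injective. So the only point left to settle is which of these injective maps are onto.

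The easy implication comes first: if $\varphi$ is conjugate to an element $\psi\in\langle\zeta,\chi\rangle$, then $\varphi=\ad_h\circ\psi$ for some $h\in A[D_n]$, and since both $\ad_h$ and $\psi$ are automorphisms, so is $\varphi$.

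For the converse I would show that $\gamma_p$ is surjective (equivalently, an automorphism, since it is already known to be injective) if and only if $p=0$. The tool is the total-exponent homomorphism $z:A[D_n]\to\Z$ sending every $t_i$ to $1$, already used in the proof of Corollary \ref{corl2_4}, together with the identity $\gamma_p(u)=u\,\Delta^{\kappa p\,z(u)}$ and the value $z(\Delta)=n(n-1)$. Applying $z$ yields $z(\gamma_p(u))=\bigl(1+\kappa p\,n(n-1)\bigr)\,z(u)$, so that $z\circ\gamma_p=\bigl(1+\kappa p\,n(n-1)\bigr)\,z$, whose image is $\bigl(1+\kappa p\,n(n-1)\bigr)\Z$. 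If $\gamma_p$ were surjective then every integer would lie in this image (as $z$ itself is onto $\Z$), forcing $1+\kappa p\,n(n-1)$ to be a unit of $\Z$, i.e.\ equal to $\pm1$. Since $n\ge6$ gives $n(n-1)\ge30$ and $\kappa\ge1$, the value $-1$ is impossible for any integer $p$, and the value $+1$ forces $p=0$. Hence $\gamma_p$ is an automorphism precisely when $p=0$, that is $\gamma_p=\id$.

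Combining these observations finishes the argument: if $\varphi$ is an automorphism, it is conjugate to $\psi\circ\gamma_p$ for some $\psi\in\langle\zeta,\chi\rangle$ and $p\in\Z$; as $\varphi$, the conjugating inner automorphism, and $\psi$ are all automorphisms, $\gamma_p$ must be one too, whence $p=0$ and $\varphi$ is conjugate to $\psi\in\langle\zeta,\chi\rangle$. I do not expect a genuine obstacle here: the entire content is the elementary unit argument on $z$, which merely reuses the computation already carried out for Corollary \ref{corl2_4}, with the hypothesis $n\ge6$ entering only to exclude the case $1+\kappa p\,n(n-1)=-1$.
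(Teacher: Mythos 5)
Your proposal is correct and follows essentially the same route as the paper: reduce via Corollary \ref{corl2_4} to $\varphi$ conjugate to $\psi\circ\gamma_p$, then use the total-exponent homomorphism $z$ and the identity $\gamma_p(u)=u\Delta^{\kappa p\,z(u)}$ to force $1+\kappa p\,n(n-1)=\pm 1$ and hence $p=0$. The only (harmless) difference is that you spell out why the value $-1$ is impossible, a point the paper leaves implicit.
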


\begin{proof}
Clearly, if $\varphi$ is conjugate to an element of $\langle\zeta,\chi\rangle$, then $\varphi$ is an automorphism.
Conversely, suppose that $\varphi$ is an automorphism.
We know from Corollary \ref{corl2_4} that there exist $\psi\in\langle\zeta,\chi\rangle$ and $p\in\Z$ such that $\varphi$ is conjugate to $\psi\circ\gamma_p$.
Thus, up to conjugation and up to composing on the left by $\psi^{-1}$, we can assume that $\varphi=\gamma_p$.
It remains to show that $p=0$.

Let again $z:A[D_n]\to\Z$ be the homomorphism which sends $t_i$ to $1$ for all $1\le i\le n$.
Recall that $\gamma_p(u)=u\Delta^{\kappa p\,z(u)}$ for all $u\in A[D_n]$.
For $u\in A[D_n]$, we have
\[
(z\circ\gamma_p)(u)=(1+n(n-1)\kappa p)z(u)\in(1+n(n-1)\kappa p)\Z\,.
\]
Since $\gamma_p$ is an automorphism, $z\circ\gamma_p$ is surjective, hence $\Z=\Im(z\circ\gamma_p)\subset (1+n(n-1)\kappa p)\Z$.
It follows that $(1+n(n-1)\kappa p)\in\{\pm 1\}$, hence $p=0$.
\end{proof}

By combining Corollary \ref{corl2_5} with Crisp--Paris \cite[Theorem 4.9]{CriPar1} we immediately obtain the following.

\begin{corl}\label{corl2_6}
Let $n\ge 6$.
\begin{itemize}
\item[(1)]
If $n$ is even, then
\[
\Aut(A[D_n])=\Inn(A[D_n])\rtimes\langle\zeta,\chi\rangle\simeq(A[D_n]/Z(A[D_n]))\rtimes(\Z /2\Z\times\Z/2\Z)\,,
\]
and $\Out(A[D_n])\simeq\Z/2\Z\times\Z/2\Z$, where $Z(A[D_n])$ denotes the center of $A[D_n]$.
\item[(2)]
If $n$ is odd, then
\[
\Aut(A[D_n])=\Inn(A[D_n])\rtimes\langle\chi\rangle\simeq(A[D_n]/Z(A[D_n]))\rtimes(\Z/2\Z)\,,
\]
and $\Out(A[D_n])\simeq\Z/2\Z$.
\end{itemize}
\end{corl}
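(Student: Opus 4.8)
The plan is to deduce the statement directly from Corollary \ref{corl2_5}, which asserts that an endomorphism of $A[D_n]$ is an automorphism if and only if it is conjugate to an element of $\langle\zeta,\chi\rangle$. Since two homomorphisms are conjugate exactly when they differ by post-composition with an inner automorphism, this is precisely the set-theoretic equality $\Aut(A[D_n])=\Inn(A[D_n])\cdot\langle\zeta,\chi\rangle$. As $\Inn(A[D_n])$ is normal in $\Aut(A[D_n])$, the second isomorphism theorem gives $\Out(A[D_n])=\Aut(A[D_n])/\Inn(A[D_n])\simeq\langle\zeta,\chi\rangle/\bigl(\langle\zeta,\chi\rangle\cap\Inn(A[D_n])\bigr)$. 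The whole problem therefore reduces to computing the intersection $\langle\zeta,\chi\rangle\cap\Inn(A[D_n])$ and to exhibiting an explicit complement of $\Inn(A[D_n])$ inside $\Aut(A[D_n])$, so that the quotient can be upgraded to a semidirect product.

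I would then split according to the parity of $n$, using the facts recorded in Section \ref{sec2}: $\langle\zeta,\chi\rangle\simeq\Z/2\Z\times\Z/2\Z$, the automorphism $\chi$ is never inner, and $\zeta=\ad_\Delta$ is inner exactly when $n$ is odd. When $n$ is odd, $\zeta\in\Inn(A[D_n])$ while $\chi\notin\Inn(A[D_n])$; a product of an inner automorphism with a non-inner one is non-inner, so $\zeta\chi$ is non-inner too and hence $\langle\zeta,\chi\rangle\cap\Inn(A[D_n])=\langle\zeta\rangle$. Here $\langle\chi\rangle$ is a complement of $\Inn(A[D_n])$, since $\langle\chi\rangle\cap\Inn(A[D_n])=\{1\}$ and $\Inn(A[D_n])\cdot\langle\chi\rangle=\Inn(A[D_n])\cdot\langle\zeta,\chi\rangle=\Aut(A[D_n])$ (because $\zeta$ is already inner), giving $\Aut(A[D_n])=\Inn(A[D_n])\rtimes\langle\chi\rangle$ and $\Out(A[D_n])\simeq\Z/2\Z$. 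When $n$ is even, I claim that all three nontrivial elements $\zeta$, $\chi$, $\zeta\chi$ are non-inner, so that $\langle\zeta,\chi\rangle\cap\Inn(A[D_n])=\{1\}$; then $\langle\zeta,\chi\rangle$ is itself a complement and $\Aut(A[D_n])=\Inn(A[D_n])\rtimes\langle\zeta,\chi\rangle$ with $\Out(A[D_n])\simeq\Z/2\Z\times\Z/2\Z$. In both parities I would conclude by rewriting $\Inn(A[D_n])$ as $A[D_n]/Z(A[D_n])$ through the canonical isomorphism $\Inn(G)\simeq G/Z(G)$, which puts the semidirect products in the stated form.

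The main obstacle is not the preceding group-theoretic bookkeeping, which is routine once Corollary \ref{corl2_5} is in hand, but the innerness information feeding the even case. Indeed, the non-innerness of $\zeta\chi$ for even $n$ does \emph{not} follow formally from that of $\zeta$ and $\chi$ alone, since an abstract surjection $\Z/2\Z\times\Z/2\Z\to\Z/2\Z$ could well send $\zeta\chi$ to the identity; were $\zeta\chi$ inner, the outer automorphism group would collapse to $\Z/2\Z$. This is exactly the delicate point that must be imported from Crisp--Paris \cite[Theorem 4.9]{CriPar1}, together with Paris \cite{Paris1} for the behaviour of $\zeta$, and it is what genuinely distinguishes the even case $\Out(A[D_n])\simeq\Z/2\Z\times\Z/2\Z$ from the odd case $\Out(A[D_n])\simeq\Z/2\Z$.
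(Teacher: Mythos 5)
Your proposal is correct and follows essentially the same route as the paper: the paper's proof is precisely the combination of Corollary \ref{corl2_5} (giving $\Aut(A[D_n])=\Inn(A[D_n])\cdot\langle\zeta,\chi\rangle$) with Crisp--Paris \cite[Theorem 4.9]{CriPar1}, which supplies exactly the intersection data $\langle\zeta,\chi\rangle\cap\Inn(A[D_n])$ that you identify as the one non-formal ingredient. Your unpacking of the group-theoretic bookkeeping, and your observation that the non-innerness of $\zeta\chi$ for even $n$ cannot be deduced from that of $\zeta$ and $\chi$ alone but must be imported from \cite{CriPar1}, match the paper's argument.
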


We denote by $Z (A [D_n])$ the center of $A [D_n]$, we set $A_Z [D_n] = A [D_n] / Z (A [D_n])$, and we denote by $\xi : A [D_n] \to A_Z [D_n]$ the canonical projection. 
For each $1 \le i \le n$, we set $t_{Z,i} = \xi (t_i)$.
Note that an endomorphism $\varphi : A [D_n] \to A [D_n]$ induces an endomorphism $\varphi_Z : A_Z [D_n] \to A_Z [D_n]$ if and only if $\varphi (Z (A [D_n])) \subset Z (A[D_n])$. 
We say that an endomorphism $\psi: A_Z [D_n] \to A_Z [D_n]$ \emph{lifts} if there exists an endomorphism
$\varphi : A [D_n] \to A [D_n]$ such that $\varphi_Z = \psi$.
Then we call $\varphi$ a \emph{lift} of $\psi$.
In Section \ref{sec7} we prove the following.

\begin{prop}\label{propA1}
Let $n \ge 4$. 
Then every endomorphism of $A_Z [D_n]$ lifts.
\end{prop}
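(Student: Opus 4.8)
The plan is to lift $\psi$ one generator at a time and then to kill, by multiplying by central elements, the discrepancies that appear when the defining relations of $A[D_n]$ are checked. Fix the generator $z_0=\Delta^\kappa$ of the infinite cyclic group $Z(A[D_n])$ (so $z_0=\Delta$ if $n$ is even and $z_0=\Delta^2$ if $n$ is odd). Given $\psi:A_Z[D_n]\to A_Z[D_n]$, choose for each $i$ an element $u_i\in A[D_n]$ with $\xi(u_i)=\psi(t_{Z,i})$. I would like to define $\varphi$ by $\varphi(t_i)=u_iz_0^{a_i}$ for integers $a_i$ to be determined. Since replacing $u_i$ by a central multiple does not alter $\xi(u_i)$, we will have $\xi(u_iz_0^{a_i})=\xi(u_i)=\psi(\xi(t_i))$ for every choice of the $a_i$; hence, once $\varphi$ is known to be a well-defined homomorphism, it will automatically satisfy $\xi\circ\varphi=\psi\circ\xi$. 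The whole problem is therefore to choose the $a_i$ so that the $u_iz_0^{a_i}$ satisfy the defining relations, i.e.\ to lift $\psi$ through the central extension $1\to Z(A[D_n])\to A[D_n]\to A_Z[D_n]\to1$ induced by $\xi$.

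First I would dispose of the commuting relations. Let $z:A[D_n]\to\Z$ be the homomorphism sending every $t_i$ to $1$; it is well defined because each defining relation is length-balanced, and it is injective on $Z(A[D_n])$ since $z(z_0)=\kappa\,n(n-1)\neq0$. If $t_i,t_j$ are non-adjacent in $D_n$, then $\xi(u_i)$ and $\xi(u_j)$ commute, so $[u_i,u_j]\in\Ker(\xi)=Z(A[D_n])$; as $z([u_i,u_j])=0$, this forces $[u_i,u_j]=1$. Since central factors drop out of commutators, the relation $u_iu_j=u_ju_i$ persists after any correction $u_i\mapsto u_iz_0^{a_i}$. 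So the commuting relations cost nothing.

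The real content is in the braid relations, and this is where the shape of $D_n$ enters. For each edge $\{i,j\}$ (all of which carry $m_{i,j}=3$), the fact that $\psi$ is a homomorphism gives $\xi((u_iu_ju_i)(u_ju_iu_j)^{-1})=1$, so $c_{ij}:=(u_iu_ju_i)(u_ju_iu_j)^{-1}\in Z(A[D_n])$, say $c_{ij}=z_0^{k_{ij}}$. A short computation using the centrality of $z_0$ shows that the correction $u_i\mapsto u_iz_0^{a_i}$ replaces $c_{ij}$ by $c_{ij}z_0^{a_i-a_j}$, so I need to solve $a_j-a_i=k_{ij}$ over the edges of $D_n$. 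This is the crux, and the only place where something could fail: the system is consistent precisely because $D_n$ is a tree. Rooting the tree at any vertex and propagating $a_j=a_i+k_{ij}$ along the edges yields a solution, there being no cycle to create an incompatibility (note that $k_{ji}=-k_{ij}$ by symmetry, so the system is well posed). With these $a_i$, all braid relations hold, the commuting relations still hold, and $\varphi(t_i)=u_iz_0^{a_i}$ extends to an endomorphism $\varphi$ of $A[D_n]$. By the first paragraph $\xi\circ\varphi=\psi\circ\xi$; in particular $\varphi(Z(A[D_n]))\subset\Ker(\xi)=Z(A[D_n])$, so $\varphi_Z$ is defined and equals $\psi$, proving that $\psi$ lifts. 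Since the argument only uses that $Z(A[D_n])$ is infinite cyclic, that $z(z_0)\neq0$, and that $D_n$ is a tree, it works uniformly for every $n\ge4$.
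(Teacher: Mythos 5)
Your proof is correct and takes essentially the same approach as the paper's: both choose arbitrary lifts $u_i$ of $\psi(t_{Z,i})$, correct them by central powers of $\Delta^\kappa$ --- your solution of the edge system $a_j-a_i=k_{ij}$ by rooting the tree $D_n$ and propagating is exactly the paper's inductive correction along the generators $t_1,\dots,t_{n-2}$ and then $t_{n-1},t_n$ --- and both dispose of the commuting relations via the length homomorphism $z$, using that $z(\Delta^\kappa)=\kappa\,n(n-1)\neq 0$ makes $z$ injective on the center. No gaps.
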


From this proposition combined with Theorem \ref{thm2_3} we will deduce the following.

\begin{thm}\label{thmA2}
Let $n \ge 6$. 
Let $\varphi_Z : A_Z [D_n] \to A_Z [D_n]$ be an endomorphism. 
Then we have one of the following two possibilities up to conjugation.
\begin{itemize}
\item[(1)]
$\varphi_Z$ is cyclic.
\item[(2)]
$\varphi_Z \in \langle \zeta_Z, \chi_Z \rangle$.
\end{itemize}
\end{thm}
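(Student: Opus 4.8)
The plan is to combine Proposition \ref{propA1} with the classification of Theorem \ref{thm2_3}. Given an endomorphism $\varphi_Z:A_Z[D_n]\to A_Z[D_n]$, I would first invoke Proposition \ref{propA1} to choose a lift $\varphi:A[D_n]\to A[D_n]$, so that $\xi\circ\varphi=\varphi_Z\circ\xi$ and, in particular, $\varphi(Z(A[D_n]))\subset Z(A[D_n])$. Theorem \ref{thm2_3} then says that, up to conjugation, $\varphi$ is (1) cyclic, (2) of the form $\psi\circ\beta_{p,q}\circ\pi$, or (3) of the form $\psi\circ\gamma_p$, with $\psi\in\langle\zeta,\chi\rangle$ and $p,q\in\Z$. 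The first thing to check is that passing to a normal form is harmless at the level of $A_Z[D_n]$: if $\varphi_0=\ad_h\circ\varphi$ denotes the corresponding normal form, then $\varphi_0(Z(A[D_n]))=\ad_h(\varphi(Z(A[D_n])))=\varphi(Z(A[D_n]))\subset Z(A[D_n])$, since conjugation fixes central elements, so $\varphi_0$ again descends to $A_Z[D_n]$ and induces there the conjugate $\ad_{\xi(h)}\circ\varphi_Z$ of $\varphi_Z$. As the conclusion is stated up to conjugation, I may therefore assume $\varphi$ is literally in one of the three normal forms.

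Two of the three cases are immediate. If $\varphi$ is cyclic, say $\varphi(t_i)=h$ for all $i$, then $\varphi_Z(t_{Z,i})=\xi(h)$ for all $i$, so $\varphi_Z$ is cyclic and we land in conclusion (1). If $\varphi=\psi\circ\gamma_p$, then since $\gamma_p(t_i)=t_i\Delta^{\kappa p}$ and $\Delta^\kappa$ generates $Z(A[D_n])=\Ker(\xi)$, we get $\xi(\gamma_p(t_i))=\xi(t_i)=t_{Z,i}$; hence $(\gamma_p)_Z=\id$ and $\varphi_Z=\psi_Z\in\langle\zeta_Z,\chi_Z\rangle$, which is conclusion (2). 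This uses only that $\zeta,\chi$ are automorphisms (so they preserve the center and $\zeta_Z,\chi_Z$ make sense) and that $\Delta^{\kappa p}\in\Ker(\xi)$.

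The real content, and what I expect to be the \textbf{main obstacle}, is to show that case (2) cannot occur, i.e.\ that no map $\psi\circ\beta_{p,q}\circ\pi$ preserves the center. Since $\psi$ is an automorphism it preserves $Z(A[D_n])$, so it suffices to prove $\beta_{p,q}(\pi(\Delta^\kappa))\notin Z(A[D_n])$ for all $p,q$. I would first compute $\pi(\Delta[D_n])$: using the explicit action of $\Delta[D_n]$ on the $t_i$ (trivial when $n$ is even, and swapping $t_{n-1},t_n$ when $n$ is odd), one checks that $\pi(\Delta[D_n])$ commutes with every $s_i$, hence is central in $A[A_{n-1}]$, and comparing exponent sums with $\Delta[A_{n-1}]^2$ (both equal to $n(n-1)$) forces $\pi(\Delta[D_n])=\Delta[A_{n-1}]^2$, so $\pi(\Delta[D_n]^\kappa)=\Delta[A_{n-1}]^{2\kappa}$. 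Next, writing $c=\Delta_Y^{2p}\Delta^{\kappa q}$, which is central in $A_Y=\iota(A[A_{n-1}])$, the relation $\beta_{p,q}(s_i)=t_i\,c$ gives $\beta_{p,q}(w)=\iota(w)\,c^{e(w)}$ for every $w\in A[A_{n-1}]$, where $e(w)$ is the exponent sum of $w$. Applying this to $w=\Delta[A_{n-1}]^{2\kappa}$ (for which $\iota(w)=\Delta_Y^{2\kappa}$ and $e(w)=\kappa n(n-1)$) yields
\[
\beta_{p,q}(\pi(\Delta^\kappa))=\Delta_Y^{2\kappa(1+pn(n-1))}\,\Delta^{\kappa^2 qn(n-1)}\,.
\]

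Finally I would feed this into the centralizer result of Paris recalled in Section \ref{sec2}: the centralizer of $Y$ in $A[D_n]$ is free abelian of rank $2$ with basis $\{\Delta_Y^2,\Delta^\kappa\}$, and $Z(A[D_n])=\langle\Delta^\kappa\rangle$ is exactly the subgroup with vanishing $\Delta_Y^2$-coordinate. The element above has $\Delta_Y^2$-coordinate $\kappa(1+pn(n-1))$, which is nonzero for every $p\in\Z$ because $pn(n-1)=-1$ is impossible for integers (as $n(n-1)\ge 2$). Hence $\beta_{p,q}(\pi(\Delta^\kappa))\notin Z(A[D_n])$, so $\psi\circ\beta_{p,q}\circ\pi$ does not preserve the center and cannot be a lift of $\varphi_Z$; this rules out case (2). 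Combining the three cases, $\varphi_Z$ is, up to conjugation, either cyclic or an element of $\langle\zeta_Z,\chi_Z\rangle$, as claimed. The delicate point is that one cannot simply read off $\varphi_Z$ from the normal form of $\varphi$: one must first verify that the normal form is compatible with the quotient by the center, and it is precisely this compatibility that eliminates the otherwise problematic family $\psi\circ\beta_{p,q}\circ\pi$.
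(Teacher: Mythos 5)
Your proposal is correct and follows essentially the same route as the paper: lift $\varphi_Z$ via Proposition \ref{propA1}, run the case analysis of Theorem \ref{thm2_3}, note that the cyclic case and the $\psi\circ\gamma_p$ case give conclusions (1) and (2) respectively, and rule out the $\psi\circ\beta_{p,q}\circ\pi$ case by showing that the image of $\Delta^\kappa$, namely $\Delta_Y^{2\kappa(1+pn(n-1))}\Delta^{\kappa^2 qn(n-1)}$, lies outside $Z(A[D_n])=\langle\Delta^\kappa\rangle$ --- exactly the paper's argument, down to the same formula. The only (harmless) deviations are that you derive $\pi(\Delta[D_n])=\Delta[A_{n-1}]^2$ from centrality of $\pi(\Delta)$ plus an exponent-sum comparison, whereas the paper reads it off the explicit word identity of Lemma \ref{lemA5}, and that you spell out the compatibility of the conjugation (normal form) with passage to the quotient, which the paper leaves implicit.
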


In addition to Theorem \ref{thmA2} we have the following.

\begin{prop}\label{propA7}
Let $n \ge 4$. 
There are only finitely many conjugacy classes of cyclic endomorphisms of $A_Z [D_n]$.
\end{prop}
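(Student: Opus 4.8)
The plan is to reduce the statement to a finiteness property for torsion elements of $A_Z[D_n]$ and then to invoke the theory of periodic elements in Garside groups. Recall from Section~\ref{sec2} that a homomorphism out of $A[D_n]$ is cyclic if and only if it sends all the standard generators to one and the same element. Since $A_Z[D_n]$ is a quotient of $A[D_n]$, the same holds there, so a cyclic endomorphism $\varphi_Z$ is determined by the common value $h=\varphi_Z(t_{Z,i})$. Conversely, for any $h\in A_Z[D_n]$ the assignment $t_{Z,i}\mapsto h$ respects every defining relation of $A[D_n]$, because both sides of each braid relation become the same word in $h$, and therefore defines a homomorphism $A[D_n]\to A_Z[D_n]$; I would then observe that this homomorphism descends to $A_Z[D_n]$ precisely when it kills $Z(A[D_n])=\langle\Delta^\kappa\rangle$, which by the equality $z(\Delta)=n(n-1)$ established in the proof of Corollary~\ref{corl2_4} is the single condition $h^{\kappa n(n-1)}=1$. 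Writing $N=\kappa n(n-1)$, the cyclic endomorphisms of $A_Z[D_n]$ are thus parametrized by the elements $h$ with $h^N=1$.

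Next I would check that, for two such parameters $h$ and $h'$, the endomorphisms $\varphi_Z^h$ and $\varphi_Z^{h'}$ are conjugate in the sense of Section~\ref{sec2} if and only if $h$ and $h'$ are conjugate in $A_Z[D_n]$; indeed $\ad_g\circ\varphi_Z^h$ sends each $t_{Z,i}$ to $ghg^{-1}$, so it equals $\varphi_Z^{ghg^{-1}}$. Consequently the conjugacy classes of cyclic endomorphisms of $A_Z[D_n]$ are in bijection with the conjugacy classes of elements of order dividing $N$, and the proposition is equivalent to the assertion that $A_Z[D_n]$ has only finitely many conjugacy classes of such elements.

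To establish this I would lift to $A[D_n]$. An element $h$ of order dividing $N$ lifts to some $g\in A[D_n]$ with $g^N\in Z(A[D_n])$, that is, with $g^N=\Delta^{\kappa m}$ for some $m\in\Z$, so that $g$ is a periodic element. Replacing $g$ by $g\Delta^{\kappa j}$ for a suitable $j$ (which alters neither $\xi(g)=h$ nor the centrality of $g^N$), I can normalize so that $0\le z(g)<N$; a short computation with the homomorphism $z$ then gives $g^N=\Delta^k$ with $0\le k<\kappa N$. Since $\xi$ carries conjugacy classes to conjugacy classes, it is enough to show that the periodic elements $g\in A[D_n]$ whose power $g^N$ is a power of $\Delta$ lying in this bounded range fall into finitely many conjugacy classes.

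The finiteness of conjugacy classes of periodic elements is where the real content lies, and I would draw it from the structure theory of Garside groups: $A[D_n]$ is a Garside group with Garside element $\Delta$, and by the classification of periodic elements (Gonz\'alez-Meneses in the braid case, and Lee and Lee in general) every periodic element is conjugate to a power of one of finitely many fixed roots of a central power of $\Delta$. Each such root has finite order in $A_Z[D_n]$, so its powers contribute only finitely many conjugacy classes; hence $A_Z[D_n]$ has finitely many conjugacy classes of periodic, i.e.\ torsion, elements, which is more than we need. Alternatively one can argue directly that a periodic element has bounded canonical length and bounded infimum in its super summit set, so that its super summit set, a complete conjugacy invariant, is confined to a finite set of elements of the form $\Delta^r x_1\cdots x_\ell$ with the $x_i$ simple and both $r$ and $\ell$ bounded. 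I expect this control of periodic elements up to conjugacy to be the main obstacle, the preceding reductions being formal. Finally I would note that the whole argument uses only the description of $Z(A[D_n])$ and the Garside property of $A[D_n]$, both available for every $n\ge 4$, so the proposition holds in that full range.
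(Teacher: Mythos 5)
Your proof is correct, and it follows the paper's reduction up to a point before diverging at the key finiteness step. Like the paper, you reduce to the common value $h=\varphi_Z(t_{Z,i})$ and derive $h^{\kappa n(n-1)}=1$ from the vanishing of $(\varphi_Z\circ\xi)(\Delta^\kappa)$ (your bijection between conjugacy classes of cyclic endomorphisms and conjugacy classes of elements of order dividing $N$ is a sharper form of what the paper leaves implicit, and your converse parametrization is a nice extra, though not needed for finiteness). Where you part ways is in proving that $A_Z[D_n]$ has only finitely many conjugacy classes of such torsion elements: the paper disposes of this in one line by citing Bestvina's theorem that a spherical-type Artin group modulo its center has finitely many conjugacy classes of finite subgroups, whereas you lift back to $A[D_n]$, normalize by central powers so that $g^N=\Delta^k$ with $0\le k<\kappa N$, and invoke Garside theory of periodic elements. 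Your fallback argument via super summit sets is the solid way to phrase this: the translation number of such a $g$ equals $k/N\in[0,\kappa)$, so standard inequalities relating $\inf_s$, $\sup_s$ and translation numbers bound both the infimum and the canonical length of super summit representatives, confining them to a finite set of elements $\Delta^r x_1\cdots x_\ell$ with the $x_i$ simple; since the super summit set is a complete conjugacy invariant, finiteness follows. (Your first-choice citation is shakier: the strong Ker\'ekj\'art\'o-type statement that every periodic element is conjugate to a power of one of finitely many fixed roots of $\Delta$ is known for braids but is more delicate in general Garside groups, so you should lean on the super summit argument or on the finiteness of conjugacy classes of finite subgroups rather than on that classification.) The trade-off: the paper's proof is shorter and uses only a reference already in its bibliography, while yours is essentially self-contained modulo standard Garside machinery and, as you note, works for all $n\ge 4$ just as the paper's does.
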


\begin{proof}
Let $\varphi_Z : A_Z [D_n] \to A_Z [D_n]$ be a cyclic endomorphism. 
There exists $g_Z \in A_Z [D_n]$ such that $\varphi_Z (t_{Z,i}) = g_Z$ for all $1 \le i \le n$. 
We denote by $\Delta$ the Garside element of $A [D_n]$, and we set $\kappa = 2$ if $n$ is odd and $\kappa = 1$ if $n$ is even. 
We have $1 = (\varphi_Z \circ \xi) (\Delta^\kappa) = g_Z^{\kappa n (n-1)}$, hence $g_Z$ is of finite order.
By Bestvina \cite[Theorem 4.5]{Bestv1} there are finitely many conjugacy classes of finite subgroups in $A_Z[D_n]$. 
Since $\langle g_Z \rangle$ is a finite subgroup of $A_Z [D_n]$, it follows that there are finitely many choices for $g_Z$ up to conjugation. 
\end{proof}

In Lemma \ref{lemA6} we will show that, if $n$ is even then $\langle \zeta_Z, \chi_Z \rangle \cap \Inn (A_Z [D_n]) = \{ \id\}$, and if $n$ is odd then $\langle \chi_Z \rangle \cap \Inn (A_Z [D_n]) = \{ \id \}$.
Furthermore, it is well-known that the center of $A_Z [D_n]$ is trivial (see Cumplido--Paris \cite[Proposition 6]{CumPar1} for example). 
These two remarks combined with Theorem \ref{thmA2} imply the following.

\begin{corl}\label{corlA3}
Let $n \ge 6$.
\begin{itemize}
\item[(1)]
If $n$ is even, then
\[
\Aut (A_Z [D_n]) = \Inn (A_Z [D_n]) \rtimes \langle \zeta_Z, \chi_Z \rangle \simeq A_Z [D_n] \rtimes (\Z / 2 \Z \times \Z / 2 \Z) \simeq \Aut (A [D_n])\,,
\]
and $\Out (A_Z [D_n]) \simeq \Z / 2 \Z \times \Z / 2\Z \simeq \Out (A [D_n])$.
\item[(2)]
If $n$ is odd, then 
\[
\Aut (A_Z [D_n]) = \Inn (A_Z [D_n]) \rtimes \langle \chi_Z \rangle \simeq A_Z [D_n] \rtimes (\Z / 2 \Z) \simeq \Aut (A [D_n])\,,
\]
and $\Out (A_Z [D_n]) \simeq \Z / 2 \Z \simeq \Out (A [D_n])$.
\end{itemize}
\end{corl}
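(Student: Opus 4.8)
The plan is to read off the structure of $\Aut(A_Z[D_n])$ and $\Out(A_Z[D_n])$ directly from Theorem \ref{thmA2}, Lemma \ref{lemA6} and the triviality of $Z(A_Z[D_n])$, treating the even and odd cases in parallel and then matching the result with Corollary \ref{corl2_6}. Let $z:A[D_n]\to\Z$ denote the exponent-sum homomorphism sending each $t_i$ to $1$, as in the proof of Corollary \ref{corl2_4}.

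First I would show that, up to conjugation, every \emph{automorphism} of $A_Z[D_n]$ lies in $\langle\zeta_Z,\chi_Z\rangle$. By Theorem \ref{thmA2}, an arbitrary endomorphism is, up to conjugation, either cyclic or an element of $\langle\zeta_Z,\chi_Z\rangle$. A cyclic endomorphism $\varphi_Z$ satisfies $\varphi_Z(t_{Z,1})=\varphi_Z(t_{Z,2})$, so $t_{Z,1}t_{Z,2}^{-1}\in\Ker\varphi_Z$; since $z(t_1t_2^{-1})=0$ while $z$ is nonzero on every nontrivial element of $Z(A[D_n])=\langle\Delta^\kappa\rangle$, we have $t_1t_2^{-1}\notin Z(A[D_n])$, i.e. $t_{Z,1}t_{Z,2}^{-1}\neq 1$. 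Hence a cyclic endomorphism is never injective, in particular never an automorphism. As the elements of $\langle\zeta_Z,\chi_Z\rangle$ are automorphisms (being induced by the automorphisms $\zeta,\chi$, which preserve the center), the automorphisms are exactly the conjugates of elements of $\langle\zeta_Z,\chi_Z\rangle$, so $\Aut(A_Z[D_n])=\Inn(A_Z[D_n])\cdot\langle\zeta_Z,\chi_Z\rangle$.

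Next I would assemble the semidirect product. Since $\Inn(A_Z[D_n])$ is normal in $\Aut(A_Z[D_n])$ and, by Lemma \ref{lemA6}, meets $\langle\zeta_Z,\chi_Z\rangle$ (even case), resp. $\langle\chi_Z\rangle$ (odd case), only in $\{\id\}$, the previous step yields $\Aut(A_Z[D_n])=\Inn(A_Z[D_n])\rtimes\langle\zeta_Z,\chi_Z\rangle$ when $n$ is even. When $n$ is odd, $\zeta$ is conjugation by $\Delta$, so $\zeta_Z\in\Inn(A_Z[D_n])$ and thus $\langle\zeta_Z,\chi_Z\rangle\subset\Inn(A_Z[D_n])\cdot\langle\chi_Z\rangle$; combined with Lemma \ref{lemA6} this gives $\Aut(A_Z[D_n])=\Inn(A_Z[D_n])\rtimes\langle\chi_Z\rangle$. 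To reach the explicit descriptions I would use $Z(A_Z[D_n])=\{1\}$ (Cumplido--Paris), so that $g_Z\mapsto\ad_{g_Z}$ identifies $A_Z[D_n]$ with $\Inn(A_Z[D_n])$, and I would check $\langle\zeta_Z,\chi_Z\rangle\simeq\Z/2\Z\times\Z/2\Z$ by verifying that $\zeta_Z$, $\chi_Z$ and $\zeta_Z\chi_Z$ are all nontrivial: each moves some $t_{Z,i}$, again by the exponent-sum argument (e.g. $\chi_Z(t_{Z,1})=t_{Z,1}^{-1}\neq t_{Z,1}$ and $\zeta_Z(t_{Z,n-1})=t_{Z,n}\neq t_{Z,n-1}$). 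Likewise $\langle\chi_Z\rangle\simeq\Z/2\Z$. This gives the stated $A_Z[D_n]\rtimes(\Z/2\Z\times\Z/2\Z)$, resp. $A_Z[D_n]\rtimes(\Z/2\Z)$.

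Finally, for the comparison with $\Aut(A[D_n])$ I would match the two semidirect products term by term against Corollary \ref{corl2_6}. There $\Aut(A[D_n])\simeq(A[D_n]/Z(A[D_n]))\rtimes(\Z/2\Z\times\Z/2\Z)=A_Z[D_n]\rtimes(\Z/2\Z\times\Z/2\Z)$, where $\psi\in\langle\zeta,\chi\rangle$ acts on $\Inn(A[D_n])$ by $\psi\ad_g\psi^{-1}=\ad_{\psi(g)}$; writing $\bar g=\xi(g)$ and using $\Inn(A[D_n])\simeq A_Z[D_n]$, this is the action $\bar g\mapsto\xi(\psi(g))=\psi_Z(\bar g)$, which is exactly the action of $\langle\zeta_Z,\chi_Z\rangle$ on $\Inn(A_Z[D_n])\simeq A_Z[D_n]$. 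The two actions coinciding, the semidirect products are isomorphic, giving $\Aut(A_Z[D_n])\simeq\Aut(A[D_n])$; and $\Out(A_Z[D_n])=\Aut(A_Z[D_n])/\Inn(A_Z[D_n])\simeq\langle\zeta_Z,\chi_Z\rangle\simeq\Z/2\Z\times\Z/2\Z\simeq\Out(A[D_n])$, with the analogous $\Z/2\Z$ statements in the odd case. The bulk of the argument is bookkeeping, the genuine content residing upstream in Theorem \ref{thmA2} and Lemma \ref{lemA6}; the one point I expect to require real care in the writing is this last step, namely checking that the two semidirect-product actions actually agree, so that the isomorphism $\Aut(A_Z[D_n])\simeq\Aut(A[D_n])$ is canonical and not merely a coincidence of orders.
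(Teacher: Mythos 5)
Your proposal is correct and follows essentially the same route as the paper: the paper derives this corollary precisely from Theorem \ref{thmA2} (ruling out cyclic endomorphisms as automorphisms), Lemma \ref{lemA6} (trivial intersection with the inner automorphisms), the triviality of $Z(A_Z[D_n])$ via Cumplido--Paris, and the comparison with Corollary \ref{corl2_6}. Your write-up simply fills in the bookkeeping the paper leaves implicit, including the point that the two semidirect-product actions agree, which is indeed the right detail to make the isomorphism $\Aut(A_Z[D_n])\simeq\Aut(A[D_n])$ explicit.
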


A group $G$ is said to be \emph{co-Hopfian} if every injective endomorphism of $G$ is an isomorphism. 
Another direct consequence of Theorem \ref{thmA2} is the following.

\begin{corl}\label{corlA4}
Let $n \ge 6$. 
Then $A_Z[D_n]$ is co-Hopfian.
\end{corl}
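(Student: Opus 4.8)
The plan is to derive co-Hopfianity directly from the dichotomy of Theorem \ref{thmA2}, by showing that the only injective endomorphisms are the automorphisms appearing in its second alternative. First I would fix an injective endomorphism $\varphi_Z : A_Z[D_n] \to A_Z[D_n]$ and apply Theorem \ref{thmA2}, which tells me that up to conjugation either $\varphi_Z$ is cyclic or $\varphi_Z \in \langle \zeta_Z, \chi_Z \rangle$. The key observation making the reduction harmless is that both injectivity and the property of being cyclic are preserved under composition with an inner automorphism: if $\psi = \ad_{h_Z} \circ \varphi_Z$, then $\psi$ is injective exactly when $\varphi_Z$ is, and $\psi$ has cyclic image $\langle h_Z g_Z h_Z^{-1} \rangle$ exactly when $\varphi_Z$ has cyclic image $\langle g_Z \rangle$. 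Hence it suffices to analyze the two normal forms.

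Next I would rule out the cyclic alternative. If (up to conjugation, hence genuinely) $\varphi_Z$ is cyclic, then there is $g_Z \in A_Z[D_n]$ with $\varphi_Z(t_{Z,i}) = g_Z$ for all $1 \le i \le n$; in particular $\varphi_Z(t_{Z,n-1}) = \varphi_Z(t_{Z,n})$. Since $t_{Z,n-1} \ne t_{Z,n}$ — equivalently $t_{n-1} t_n^{-1} \notin Z(A[D_n])$, which I would confirm from the explicit description of $Z(A[D_n])$ as the infinite cyclic group generated by $\Delta$ or $\Delta^2$ — the map $\varphi_Z$ identifies two distinct generators and so cannot be injective. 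This contradicts the hypothesis, so the cyclic case does not occur for an injective $\varphi_Z$.

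It then remains to treat the second alternative. If $\ad_{h_Z} \circ \varphi_Z = \psi$ with $\psi \in \langle \zeta_Z, \chi_Z \rangle$, I would use that $\zeta$ and $\chi$ are automorphisms of $A[D_n]$ which preserve the center (the center being characteristic), so they induce automorphisms $\zeta_Z, \chi_Z$ of $A_Z[D_n]$ and every element of $\langle \zeta_Z, \chi_Z \rangle$ is an automorphism. Consequently $\varphi_Z = \ad_{h_Z^{-1}} \circ \psi$ is a composition of automorphisms, hence itself an automorphism. Combining the two cases, every injective endomorphism of $A_Z[D_n]$ is an isomorphism, which is exactly co-Hopfianity.

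I expect the only mildly delicate point to be the bookkeeping around ``up to conjugation'': one must apply the conjugating element consistently so that injectivity and cyclicity really transfer between $\varphi_Z$ and its normal form, and one should verify rather than assume that $t_{Z,n-1} \ne t_{Z,n}$. Everything else is a formal consequence of Theorem \ref{thmA2} together with the fact that the elements of $\langle \zeta_Z, \chi_Z \rangle$ are automorphisms, so the argument should be short.
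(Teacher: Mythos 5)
Your proposal is correct and follows exactly the route the paper intends: the paper states Corollary \ref{corlA4} as a direct consequence of Theorem \ref{thmA2}, and your argument is precisely that consequence spelled out — the cyclic alternative is incompatible with injectivity (since $\varphi_Z(t_{Z,n-1}) = \varphi_Z(t_{Z,n})$ while $t_{Z,n-1} \neq t_{Z,n}$, which follows from the description of $Z(A[D_n])$ as $\langle \Delta^\kappa \rangle$ together with the homomorphism $z$), and the second alternative forces $\varphi_Z$ to be a conjugate of an element of $\langle \zeta_Z, \chi_Z \rangle$, hence an automorphism. Your attention to the ``up to conjugation'' bookkeeping is sound but routine, and nothing is missing.
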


Note that, in addition to the case $D_n$ for $n \ge 6$ shown in Corollary \ref{corlA4}, the only Coxeter graphs $\Gamma$ for which we know that $A[\Gamma]/ Z (A[\Gamma])$ is co-Hopfian are the Coxeter graphs $\Gamma = A_n$ for $n \ge 2$ (see Bell--Margalit \cite{BelMar1}).


\section{Geometric representations}\label{sec3}

Let $\Sigma$ be an oriented compact surface possibly with boundary, and let $\PP$ be a finite set of punctures  in the interior of $\Sigma$.
We denote by $\Homeo^+(\Sigma,\PP)$ the group of homeomorphisms of $\Sigma$ that preserve the orientation, that are the identity on a neighborhood of the boundary of $\Sigma$, and that setwise leave invariant $\PP$.
The \emph{mapping class group} of the pair $(\Sigma,\PP)$, denoted $\MM(\Sigma,\PP)$, is the group of isotopy classes of elements of $\Homeo^+(\Sigma,\PP)$.
If $\PP=\emptyset$, then we write $\MM(\Sigma,\emptyset)=\MM(\Sigma)$, and if $\PP=\{x\}$ is a singleton, then we write $\MM(\Sigma,\PP)=\MM(\Sigma,x)$.
We only give definitions and results on mapping class groups that we need for our proofs and we refer to Farb--Margalit \cite{FarMar1} for a complete account on the subject.

Recall that a \emph{geometric representation} of an Artin group $A$ is a homomorphism from $A$ to a mapping class group.
Their study is the main ingredient of our proofs. 
Important tools for constructing and understanding them are Dehn twists and essential reduction systems.
So, we start by recalling their definitions and their main properties.

A \emph{circle} of $(\Sigma,\PP)$ is the (non-oriented) image of an embedding $a:\S^1\hookrightarrow\Sigma\setminus(\partial\Sigma\cup\PP)$.
It is called \emph{generic} if it does not bound any disk containing $0$ or $1$ puncture and if it is not parallel to any boundary component.
The isotopy class of a circle $a$ is denoted by $[a]$.
We denote by $\CC(\Sigma,\PP)$ the set of isotopy classes of generic circles of $(\Sigma,\PP)$.
The \emph{intersection number} of two classes $[a],[b]\in\CC(\Sigma,\PP)$ is $i([a],[b])=\min\{|a'\cap b'|\mid a'\in[a]\text{ and }b'\in[b]\}$.
The set $\CC(\Sigma,\PP)$ is endowed with a simplicial complex structure, where a finite set $\AA$ is a simplex if $i([a],[b])=0$ for all $[a],[b]\in\AA$.
This complex is called the \emph{curve complex} of $(\Sigma,\PP)$.

In this paper by a \emph{Dehn twist} we mean a right Dehn twist and the (right) Dehn twist along a circle $a$ of $(\Sigma,\PP)$ will be denoted by $T_a$.
The following is an important tool for constructing and understanding geometric representations of Artin groups.
Its proof can be found in Farb--Margalit \cite[Section 3.5]{FarMar1}.

\begin{prop}\label{prop3_0}
Let $\Sigma$ be a compact oriented surface and let $\PP$ be a finite collection of punctures in the interior of $\Sigma$.
Let $a,b$ be two generic circles of $(\Sigma,\PP)$.
\begin{itemize}
\item[(1)]
We have $T_aT_b=T_bT_a$ if and only if $i([a],[b])=0$.
\item[(2)]
We have $T_aT_bT_a=T_bT_aT_b$ if and only if $i([a],[b])=1$.
\end{itemize}
\end{prop}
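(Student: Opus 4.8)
The plan is to reduce both equivalences to a few standard facts about Dehn twists that I would recall first: the conjugation formula $fT_af^{-1}=T_{f(a)}$ valid for every $f\in\Homeo^+(\Sigma,\PP)$; the injectivity of the assignment $[c]\mapsto T_c$ on generic circles, so that $T_c=T_d$ forces $[c]=[d]$ (this holds on all the surfaces occurring here); the invariance $i([f(a)],[f(b)])=i([a],[b])$ of the geometric intersection number; and the key formula $i([T_a(b)],[b])=i([a],[b])^2$, which I would justify by placing representatives in minimal position. With these in hand both statements become largely formal.

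For part (1), the forward implication is geometric: if $i([a],[b])=0$ I choose disjoint representatives and note that $T_a$ and $T_b$ admit supports in disjoint annular neighborhoods, hence commute. For the converse I rewrite $T_aT_b=T_bT_a$ as $T_aT_bT_a^{-1}=T_b$, i.e. $T_{T_a(b)}=T_b$; injectivity gives $[T_a(b)]=[b]$, whence $0=i([T_a(b)],[b])=i([a],[b])^2$ and therefore $i([a],[b])=0$.

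For part (2) I first convert the braid relation into an equality of circles. Rearranging $T_aT_bT_a=T_bT_aT_b$ yields $T_b^{-1}T_aT_b=T_aT_bT_a^{-1}$, that is $T_{T_b^{-1}(a)}=T_{T_a(b)}$, so by injectivity the braid relation is equivalent to $[T_a(b)]=[T_b^{-1}(a)]$. For the "only if" direction I first observe that $i([a],[b])\ge 1$: if it were $0$, then by part (1) the twists $T_a,T_b$ commute, and the braid relation would collapse to $T_a=T_b$, impossible for distinct generic circles. Assuming $i([a],[b])\ge1$, I compare the intersection numbers of the two equal classes against $[b]$: on one side $i([T_a(b)],[b])=i([a],[b])^2$, and on the other $i([T_b^{-1}(a)],[b])=i([a],[T_b(b)])=i([a],[b])$ by invariance together with $T_b(b)=b$. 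Equating gives $i([a],[b])^2=i([a],[b])$, hence $i([a],[b])=1$.

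The remaining, genuinely geometric, step is the "if" direction of part (2): showing that $i([a],[b])=1$ forces $[T_a(b)]=[T_b^{-1}(a)]$. Here I would pass to a regular neighborhood $N$ of $a\cup b$, which for $i([a],[b])=1$ is a one-holed torus, and verify the identity by a direct model computation, tracking the images of $a$ and $b$ under the respective twists in a flat square model of the torus. This explicit one-holed-torus calculation is the main obstacle, since it is the only part that does not reduce to formal manipulation of the conjugation and intersection-number formulas; everything else is bookkeeping. I would also record the standing convention $[a]\ne[b]$ in part (2), since when $[a]=[b]$ the braid relation holds trivially while $i([a],[b])=0$.
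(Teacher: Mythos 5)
The paper offers no proof of this proposition at all: it is quoted as a known result with a pointer to Farb--Margalit \cite[Section 3.5]{FarMar1}, so there is no internal argument to compare yours against; the right benchmark is the standard treatment behind that citation. Judged on that basis, your proof is correct and is essentially a self-contained reconstruction of it: the conjugation formula, the injectivity of $c\mapsto T_c$ on generic circles (this is the Fact 3.6 of Farb--Margalit that the paper itself invokes elsewhere), and the formula $i([T_a(b)],[b])=i([a],[b])^2$ are exactly the ingredients the citation hides. Two points in your write-up deserve emphasis. First, your ``only if'' argument for (2) --- playing $i([T_a(b)],[b])=i([a],[b])^2$ against $i([T_b^{-1}(a)],[b])=i([a],[b])$ to force $i([a],[b])^2=i([a],[b])$ --- is more elementary than the usual textbook route, which excludes $i([a],[b])\ge 2$ by the theorem that $T_a$ and $T_b$ then generate a free group of rank two; your bookkeeping avoids ping-pong entirely. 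Second, you are right that (2) as printed needs the convention $[a]\neq[b]$: when $[a]=[b]$ the braid relation holds trivially while $i([a],[b])=0$, so this is a (harmless) imprecision in the paper's statement --- in every application in the paper the circles involved are non-isotopic. Finally, the step you flag as the main obstacle does go through, and more easily than you suggest: in the one-holed torus, with consistent conventions, $T_a$ acts on first homology by $b\mapsto a+b$ (fixing $a$) and $T_b^{-1}$ by $a\mapsto a+b$ (fixing $b$), so $T_a(b)$ and $T_b^{-1}(a)$ both represent the class $a+b$; since isotopy classes of non-separating circles in a one-holed torus are determined by their primitive homology classes up to sign, $[T_a(b)]=[T_b^{-1}(a)]$, and since the regular neighborhood of $a\cup b$ can be taken disjoint from $\PP$, this isotopy is valid in $(\Sigma,\PP)$; the braid relation then follows from the conjugation formula. (The alternative standard computation, showing $T_aT_b(a)=b$ and conjugating, reaches the same conclusion.)
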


Let $f\in\MM(\Sigma,\PP)$.
A simplex $\AA$ of $\CC(\Sigma,\PP)$ is called a \emph{reduction system} for $f$ if $f(\AA)=\AA$.
In that case any element of $\AA$ is called a \emph{reduction class} for $f$.
A reduction class $[a]$ is an \emph{essential reduction class} if, for all $[b]\in\CC(\Sigma,\PP)$ such that $i([a],[b])\neq 0$ and for all $m\in\Z\setminus\{0\}$, we have $f^m([b])\neq[b]$.
In particular, if $[a]$ is an essential reduction class and $[b]$ is any reduction class, then $i([a],[b])=0$.
We denote by $\SS(f)$ the set of essential reduction classes for $f$.
The following gathers some key results on $\SS(f)$ that will be useful later.

\begin{thm}[Birman--Lubotzky--McCarthy \cite{BiLuMc1}]\label{thm3_1}
Let $\Sigma$ be a compact oriented surface and let $\PP$ be a finite set of punctures in the interior of $\Sigma$.
Let $f\in\MM(\Sigma,\PP)$.
\begin{itemize}
\item[(1)]
If $\SS(f)\neq\emptyset$, then $\SS(f)$ is a reduction system for $f$.
In particular, if $\SS(f)\neq\emptyset$, then $\SS(f)$ is a simplex of $\CC(\Sigma,\PP)$.
\item[(2)]
We have $\SS(f^n)=\SS(f)$ for all $n\in\Z\setminus\{0\}$.
\item[(3)]
We have $\SS(gfg^{-1}) = g(\SS(f))$ for all $g\in\MM(\Sigma,\PP)$.
\end{itemize}
\end{thm}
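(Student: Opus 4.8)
The plan is to deduce the three properties directly from the definitions, using only two elementary inputs: every $g\in\MM(\Sigma,\PP)$ preserves geometric intersection numbers, and any reduction class lies in a \emph{finite} invariant simplex, so that a suitable nonzero power of $f$ fixes it. First I would prove that $\SS(f)$ is a simplex whenever it is nonempty: if $[a],[a']\in\SS(f)$ satisfied $i([a],[a'])\neq0$, then, $[a']$ being a reduction class, it would lie in a finite reduction system and hence be fixed by some $f^N$ with $N\neq0$, contradicting the essentiality of $[a]$. Thus the classes of $\SS(f)$ are pairwise disjoint and distinct, hence finite in number and forming a simplex. Next, transporting the essentiality condition by $f$ shows $f(\SS(f))\subseteq\SS(f)$: if $[a]\in\SS(f)$ then $f([a])$ is again a reduction class, and a class $[b]$ with $i(f([a]),[b])\neq0$ fixed by some $f^m$ ($m\neq0$) would force $f^{-1}([b])$ to violate the essentiality of $[a]$. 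Since $f$ is injective and $\SS(f)$ is finite, this inclusion is an equality, so $\SS(f)$ is a reduction system.

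For (2), the decisive observation is that the essentiality conditions for $f$ and for $f^n$ coincide: ``$f^m([b])\neq[b]$ for all $m\neq0$'' trivially implies the same statement with $m$ replaced by $nm$, while conversely $f^{m_0}([b])=[b]$ with $m_0\neq0$ would give $f^{nm_0}([b])=[b]$, contradicting the $f^n$-condition. Granting this, $\SS(f)\subseteq\SS(f^n)$ is immediate, since every $f$-reduction system is an $f^n$-reduction system. For the reverse inclusion I would take $[a]\in\SS(f^n)$ and examine its $f$-orbit $\{f^k([a])\mid k\in\Z\}$: it is finite because some $f^{nM}$ fixes $[a]$, and it is a simplex because $i([a],f^l([a]))\neq0$ for some $l$ would exhibit the class $f^l([a])$, fixed by $f^{nM}$ and meeting $[a]$, contradicting the shared essentiality condition. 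Hence this orbit is an $f$-reduction system containing $[a]$, so $[a]\in\SS(f)$.

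For (3), set $h=gfg^{-1}$ and transport everything by $g$. If $\AA$ is an $f$-reduction system, then $h(g(\AA))=gf(\AA)=g(\AA)$, so $g(\AA)$ is an $h$-reduction system; and since $g$ preserves intersection numbers and $h^m(g([b]))=g(f^m([b]))$, the class $g([a])$ satisfies the essentiality condition for $h$ exactly when $[a]$ does for $f$. This yields $g(\SS(f))\subseteq\SS(gfg^{-1})$, and applying the same reasoning to $g^{-1}$ and $h$ gives the opposite inclusion.

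With the definitions as stated the three assertions are thus essentially formal. \textbf{The main obstacle} lies not in these manipulations but in the fact, internal to Birman--Lubotzky--McCarthy and not needed here, that $\SS(f)$ is the \emph{canonical} reduction system governing the Nielsen--Thurston decomposition and is nonempty precisely for reducible non-periodic $f$. Within the present task the two steps requiring the most care are the simplex property in (1) and the orbit argument giving the reverse inclusion in (2), both of which turn on converting ``belongs to a reduction system'' into ``is fixed by a nonzero power of $f$.''
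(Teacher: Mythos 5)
Your proof is correct, but there is nothing in the paper to compare it against: Theorem \ref{thm3_1} is quoted from Birman--Lubotzky--McCarthy \cite{BiLuMc1} as a known result, and the paper gives no proof of it. What you have done is verify that, with the definitions exactly as stated in Section \ref{sec3} (reduction system, reduction class, essential reduction class), all three assertions are formal consequences of two elementary facts: mapping classes preserve intersection numbers and act bijectively on $\CC(\Sigma,\PP)$, and any reduction class lies in a finite $f$-invariant simplex, hence is fixed by some nonzero power of $f$. Your manipulations are sound: the contradiction argument for pairwise disjointness in (1), the equivalence of the essentiality conditions for $f$ and $f^n$, the orbit argument producing an $f$-reduction system containing a given class of $\SS(f^n)$, and the transport-by-$g$ argument in (3) all check out. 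Two points you pass over quickly would deserve a sentence each in a full write-up: first, pairwise disjointness of the distinct classes of $\SS(f)$ yields finiteness only because a compact surface with finitely many punctures carries only boundedly many pairwise disjoint, pairwise non-isotopic generic circles; second, in the orbit argument of (2), disjointness of an arbitrary pair $f^k([a])$, $f^l([a])$ reduces to disjointness of $[a]$ and $f^{l-k}([a])$ precisely because $f^k$ preserves intersection numbers. Your closing remark is also accurate: the genuinely deep content of \cite{BiLuMc1} --- the existence and canonicity of the essential reduction system for reducible, non-periodic mapping classes and its role in the Nielsen--Thurston decomposition --- is not needed for these three properties, which is presumably why the paper states them without proof.
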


The following is well-known and it is a direct consequence of Birman--Lubotzky--McCarthy \cite{BiLuMc1} (see also Castel \cite[Corollaire 3.45]{Caste2}).
It will be often used in our proofs. 

\begin{prop}\label{prop3_2}
Let $\Sigma$ be an oriented compact surface of genus $\ge2$ and let $\PP$ be a finite set of punctures in the interior of $\Sigma$.
Let $f_0\in Z(\MM(\Sigma,\PP))$ be a central element of $\MM(\Sigma,\PP)$, let $\AA=\{[a_1],\dots,[a_p]\}$ be a simplex of $\CC(\Sigma,\PP)$, and let $k_1,\dots,k_p$ be nonzero integers.
Let $g=T_{a_1}^{k_1}T_{a_2}^{k_2}\cdots T_{a_p}^{k_p}f_0$.
Then $\SS(g)=\AA$.
\end{prop}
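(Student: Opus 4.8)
The plan is to reduce the statement to the classical case $f_0=\id$, in which $g$ is an honest multitwist and the conclusion is exactly the Birman--Lubotzky--McCarthy description of the canonical reduction system, and then to absorb the central factor $f_0$ by showing that it acts \emph{trivially} on the curve complex. The key observation is the following. For any generic circle $c$ of $(\Sigma,\PP)$ one has $f_0T_cf_0^{-1}=T_{f_0(c)}$, since conjugating a (right) Dehn twist by an orientation-preserving homeomorphism yields the Dehn twist along the image circle. As $f_0\in Z(\MM(\Sigma,\PP))$, the left-hand side equals $T_c$, so $T_{f_0(c)}=T_c$. Because a Dehn twist along a generic circle determines that circle up to isotopy (here genus $\ge2$ guarantees that all circles involved are essential and non-peripheral), this forces $f_0([c])=[c]$. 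Hence $f_0$ fixes every vertex of $\CC(\Sigma,\PP)$, i.e.\ $f_0$ induces the identity on $\CC(\Sigma,\PP)$.

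Set $M=T_{a_1}^{k_1}T_{a_2}^{k_2}\cdots T_{a_p}^{k_p}$. Since $f_0$ is central it commutes with every $T_{a_i}$, so $g^m=M^mf_0^m$ for all $m\in\Z$. By the previous paragraph $f_0^m$ acts trivially on $\CC(\Sigma,\PP)$, hence $g^m([b])=M^m([b])$ for every $[b]\in\CC(\Sigma,\PP)$ and every $m\in\Z$; in other words $g$ and $M$ induce exactly the same simplicial action on the curve complex in every power. The set $\SS(\cdot)$ depends only on this data: a class is a reduction class precisely when it is fixed, and the essentiality condition involves only the adjacency datum $i([a],[b])\neq0$ together with the classes $g^m([b])$ (resp.\ $M^m([b])$) for $m\neq0$, all of which are intrinsic to the simplicial action on $\CC(\Sigma,\PP)$. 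Therefore $\SS(g)=\SS(M)$, and it remains to prove $\SS(M)=\AA$.

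This last equality is the case $f_0=\id$, which is the classical multitwist computation and can simply be quoted from Birman--Lubotzky--McCarthy (see also Castel). I would establish the two inclusions as follows. For $\AA\subseteq\SS(M)$: each $[a_i]$ is fixed by $M$ because $\AA$ is a simplex, so by Proposition \ref{prop3_0}(1) the twists $T_{a_j}$ pairwise commute and fix each $[a_i]$; and $[a_i]$ is essential because for any $[b]$ with $i([a_i],[b])\neq0$ and any $m\neq0$ the class $M^m([b])$ is obtained from $[b]$ by a nonzero shear along $a_i$, so $i(M^m([b]),[b])>0$ and in particular $M^m([b])\neq[b]$. For $\SS(M)\subseteq\AA$: if $[c]\in\SS(M)$ then, being essential, it satisfies $i([c],[a_i])=0$ for the reduction classes $[a_i]$, so $[c]$ is disjoint from $\AA$; on the surface cut along $\AA$ the map $M$ restricts on each component to a product of boundary Dehn twists, which fixes every isotopy class of that component, so any $[b]$ crossing $[c]$ and disjoint from $\AA$ satisfies $M([b])=[b]$, contradicting essentiality unless $[c]$ is isotopic to one of the $a_i$.

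The conceptually new and situation-specific part is the reduction of the first two paragraphs; the remaining multitwist statement is classical. The step I expect to require the most care is the essentiality half of the multitwist computation: controlling the shearing of $[b]$ by $M^m$ in the presence of several simultaneous twists along the disjoint curves of $\AA$, where one must verify that the twists along the $a_j$ with $j\neq i$ cannot cancel the shear produced by $T_{a_i}^{k_i}$. This is precisely the point at which the disjointness of the $a_i$ (that $\AA$ is a simplex) and the hypothesis $k_i\neq0$ are used.
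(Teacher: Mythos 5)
Your proposal is correct, and at bottom it rests on the same input as the paper: the paper offers no proof of Proposition \ref{prop3_2} at all, recording it as a direct consequence of Birman--Lubotzky--McCarthy \cite{BiLuMc1} (see also Castel \cite[Corollaire 3.45]{Caste2}), and your multitwist computation $\SS(M)=\AA$ is exactly that classical fact. What you add, and what the paper leaves implicit, is the justification of the word ``direct'': the observation that a central element satisfies $T_{f_0(c)}=f_0T_cf_0^{-1}=T_c$, hence fixes every vertex of $\CC(\Sigma,\PP)$, so that $g^m$ and $M^m$ induce the same simplicial action for every $m$; since both the notion of reduction class and the essentiality condition of Theorem \ref{thm3_1} are phrased purely in terms of that action together with the relation $i(\cdot,\cdot)=0$, it follows that $\SS(g)=\SS(M)$. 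This reduction is clean and correct. Two caveats. First, your parenthetical ``a class is a reduction class precisely when it is fixed'' is not literally true --- a reduction system is only setwise invariant, so a mapping class may permute its members --- but your argument never actually uses this gloss, only the (true) statement that membership in $\SS$ is determined by the simplicial action of all powers. Second, in the inclusion $\AA\subseteq\SS(M)$, the inequality $i(M^m([b]),[b])>0$ is the genuinely delicate point when the $k_i$ have mixed signs: the standard shearing estimates (Farb--Margalit \cite{FarMar1}) are stated for exponents of a single sign, so this step really does have to be quoted from \cite{BiLuMc1} or \cite{Caste2}, as you yourself indicate at the end; likewise, in the reverse inclusion, the existence of a curve $[b]$ crossing $[c]$ and disjoint from $\AA$ deserves a sentence ($c$ is generic and non-isotopic to any $a_i$, hence essential and non-peripheral in its component of the surface cut along $\AA$, and such a component cannot be an annulus or a pair of pants). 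With those remarks, your argument is a complete and correct expansion of the citation that the paper gives in place of a proof.
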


Let $n\ge 4$.
If $n$ is even, then $\Sigma_n$ denotes the surface of genus $\frac{n-2}{2}$ with two boundary components, and if $n$ is odd, then $\Sigma_n$ denotes the surface of genus $\frac{n-1}{2}$ with one boundary component.
Consider the circles $a_1,\dots,a_{n-1}$ drawn in Figure \ref{fig3_1}.
Then by Proposition \ref{prop3_0} we have a geometric representation $\rho_A:A[A_{n-1}]\to\MM(\Sigma_n)$ which sends $s_i$ to $T_{a_i}$ for all $1\le i\le n-1$.
The following is well-known, it is a direct consequence of Birman--Hilden \cite{BirHil1}, and its proof is explicitly given in Perron--Vannier \cite{PerVan1}.

\begin{figure}[ht!]
\begin{center}
\parbox[c]{7.1cm}{\includegraphics[width=7cm]{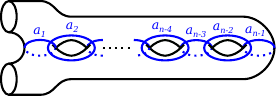}} \quad $n$ even

\bigskip
\parbox[c]{6.7cm}{\includegraphics[width=6.6cm]{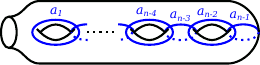}} \quad $n$ odd
\caption{Geometric representation of $A[A_{n-1}]$}\label{fig3_1}
\end{center}
\end{figure}

\begin{thm}[Birman--Hilden \cite{BirHil1}]\label{thm3_3}
Let $n\ge4$.
Then $\rho_A:A[A_{n-1}]\to\MM(\Sigma_n)$ is injective.
\end{thm}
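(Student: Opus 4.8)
The plan is to realize $\Sigma_n$ as a double cover of a disk branched over $n$ points and to identify $\rho_A$ with the Birman--Hilden lifting homomorphism, whose injectivity is then the whole point. First I would fix a disk $\D$ with $n$ marked points $P_1,\dots,P_n$ on a diameter, and let $p:\Sigma_n\to\D$ be the double cover branched over $P_1,\dots,P_n$, namely the cover associated with the homomorphism $\pi_1(\D\setminus\{P_1,\dots,P_n\})\to\Z/2\Z$ sending each small loop around a $P_i$ to the nontrivial element. A Riemann--Hurwitz count gives Euler characteristic $2-n$, and since the monodromy around $\partial\D$ equals $n$ modulo $2$, the circle $\partial\D$ lifts to two circles when $n$ is even and to a single circle when $n$ is odd; this matches exactly the genus and number of boundary components of $\Sigma_n$ described before the statement. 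Let $\theta$ denote the nontrivial deck transformation.

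Next I would set up the correspondence between half-twists downstairs and Dehn twists upstairs. For $1\le i\le n-1$ choose an arc $\delta_i$ in $\D$ joining $P_i$ to $P_{i+1}$, with $\delta_i$ and $\delta_{i+1}$ meeting only at $P_{i+1}$ and $\delta_i\cap\delta_j=\emptyset$ for $|i-j|\ge2$. The preimage $a_i=p^{-1}(\delta_i)$ is a single circle of $\Sigma_n$ (the two lifts of the open arc close up at the ramification points over $P_i$ and $P_{i+1}$), and one checks that these are, up to isotopy, the circles of Figure \ref{fig3_1}, with $i([a_i],[a_{i+1}])=1$ (the lifts cross once at the preimage of the shared branch point $P_{i+1}$) and $i([a_i],[a_j])=0$ for $|i-j|\ge2$. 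The standard generator $\sigma_i$ of $\MM(\D,\{P_1,\dots,P_n\})$ is the half-twist along $\delta_i$, and the classical fact is that its $\theta$-equivariant lift through $p$ is exactly the Dehn twist $T_{a_i}$. Since $\MM(\D,\{P_1,\dots,P_n\})$ is canonically $A[A_{n-1}]$ with $\sigma_i\leftrightarrow s_i$, the assignment $s_i\mapsto T_{a_i}$ defining $\rho_A$ coincides with the lifting map.

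The core of the argument is the Birman--Hilden theorem for this cover. I would invoke that $p:\Sigma_n\to\D$ has the Birman--Hilden property, i.e.\ that any two $\theta$-equivariant homeomorphisms of $\Sigma_n$ which are isotopic are already equivariantly isotopic. Granting this, the map $\Theta$ sending a class in $\MM(\D,\{P_1,\dots,P_n\})$ to the unique $\theta$-equivariant lift that is the identity near $\partial\Sigma_n$ is a well-defined homomorphism: the lift exists by branched-cover lifting theory, and it is unique because the two lifts of a given map differ by $\theta$, which is not the identity near $\partial\Sigma_n$. It is injective, for if such a lift is isotopic to the identity, then by the Birman--Hilden property it is equivariantly so, and pushing the equivariant isotopy down to $\D$ shows the original class was trivial. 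Here the boundary is essential: in the closed hyperelliptic case $\theta$ is central in the mapping class group and contributes a $\Z/2\Z$ kernel, whereas our boundary conditions exclude $\theta$ and kill that kernel. Since $\Theta(s_i)=T_{a_i}=\rho_A(s_i)$, we conclude $\rho_A=\Theta$, which is injective. The step requiring the most care, and the genuine content of Birman--Hilden, is the isotopy-to-equivariant-isotopy property; it is classical for hyperelliptic covers (Birman--Hilden \cite{BirHil1}) and is carried out explicitly in Perron--Vannier \cite{PerVan1}, whose argument I would follow.
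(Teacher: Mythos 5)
Your proposal is correct and matches the intended argument: the paper gives no proof of this theorem but cites Birman--Hilden \cite{BirHil1} and Perron--Vannier \cite{PerVan1}, and what those references do is precisely what you outline — realize $\Sigma_n$ as the double cover of the disk branched over $n$ points, identify $\rho_A$ with the equivariant lifting homomorphism sending half-twists to Dehn twists, and deduce injectivity from the isotopy-to-equivariant-isotopy property, with the boundary condition ruling out the deck transformation and hence the $\Z/2\Z$ kernel of the closed hyperelliptic case. Your identification of the Birman--Hilden property as the genuine content, rather than something to be reproved, is exactly right.
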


The following is proved in Castel \cite{Caste2} for $n\ge6$ using the geometric representation $\rho_A$ defined above.
It is proved in Chen--Kordek--Margalit \cite{ChKoMa1} for $n\ge5$ with a different method.

\begin{thm}[Castel \cite{Caste2}, Chen--Kordek--Margalit \cite{ChKoMa1}, Orevkov \cite{Orevk1}]\label{thm3_4}
Let $n\ge5$.
Let $\varphi:A[A_{n-1}]\to A[A_{n-1}]$ be a homomorphism.
Then up to conjugation we have one of the following two possibilities.
\begin{itemize}
\item[(1)]
$\varphi$ is cyclic.
\item[(2)]
There exist $\psi\in\langle\bar\chi\rangle$ and $p\in\Z$ such that $\varphi=\psi\circ\bar\gamma_p$.
\end{itemize}
\end{thm}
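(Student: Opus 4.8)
The plan is to pull $\varphi$ into a mapping class group through the injective geometric representation $\rho_A\colon A[A_{n-1}]\to\MM(\Sigma_n)$ of Theorem~\ref{thm3_3}, and to read off the answer from the essential reduction systems of the images of the generators. Set $f_i=\rho_A(\varphi(s_i))$ for $1\le i\le n-1$. The defining relations of $A[A_{n-1}]$ give $f_if_j=f_jf_i$ for $|i-j|\ge2$ and $f_if_{i+1}f_i=f_{i+1}f_if_{i+1}$. Since the standard generators $s_1,\dots,s_{n-1}$ are all conjugate in $A[A_{n-1}]$, the $f_i$ are all conjugate in $\MM(\Sigma_n)$, hence share the same Nielsen--Thurston type. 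If one $f_i$ is trivial then all are, $\varphi$ has cyclic image, and we are in case~(1); so I assume every $f_i$ is nontrivial. As $\Sigma_n$ has nonempty boundary, $\MM(\Sigma_n)$ is torsion free, so each $f_i$ is either reducible, with $\SS(f_i)\neq\emptyset$, or pseudo-Anosov, with $\SS(f_i)=\emptyset$, the same alternative holding for every $i$.

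The technical core is the analysis of the simplices $\SS(f_i)$ by means of Theorem~\ref{thm3_1}. From $f_if_j=f_jf_i$ with $|i-j|\ge2$ one sees, using Theorem~\ref{thm3_1}(3), that $f_i$ preserves the finite simplex $\SS(f_j)$; passing to a power fixing its vertices and invoking the defining property of essential reduction classes yields $i(\SS(f_i),\SS(f_j))=0$. From the braid relation one gets that $h_i=f_if_{i+1}f_i$ conjugates $f_i$ to $f_{i+1}$, so $h_i(\SS(f_i))=\SS(f_{i+1})$ by Theorem~\ref{thm3_1}(3). Combining the disjointness coming from the commuting relations with this symmetry, and using connectivity of the chain $s_1,\dots,s_{n-1}$, I would show that each $\SS(f_i)$ is a single class $\{[c_i]\}$, that $i([c_i],[c_{i+1}])=1$ and $i([c_i],[c_j])=0$ for $|i-j|\ge2$, and moreover, exploiting the braid relations across adjacent curves that cross exactly once, that the complementary pseudo-Anosov pieces of each $f_i$ vanish, so that $f_i=T_{c_i}^{\epsilon}z$ for a common sign $\epsilon\in\{\pm1\}$ and a common central $z\in Z(\MM(\Sigma_n))$. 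The pseudo-Anosov alternative is excluded along the way: commuting pseudo-Anosov maps are powers of a common pseudo-Anosov, and tracing this through the braid and conjugacy relations forces the image of $\varphi$ to be abelian, hence cyclic by the remark of Section~\ref{sec2}, returning us to case~(1).

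Granting that the $f_i$ are twists $T_{c_i}^{\epsilon}z$ along a chain $c_1,\dots,c_{n-1}$ with the same intersection pattern as the standard chain $a_1,\dots,a_{n-1}$, the change-of-coordinates principle provides $H\in\Homeo^+(\Sigma_n)$ with $H(c_i)=a_i$; after conjugating the representation by $[H]$ I may assume $f_i=T_{a_i}^{\epsilon}z$. The sign is pinned down to $\epsilon=\pm1$ by a computation in the three-strand braid group, where the relation $x^ky^kx^k=y^kx^ky^k$ holds only for $k\in\{-1,0,1\}$ (compare Proposition~\ref{prop3_0}(2)). Now $T_{a_i}^{\epsilon}z=f_i$ lies in $\Im\rho_A$ and $\rho_A(s_i)=T_{a_i}$, so $z\in\Im\rho_A\cap Z(\MM(\Sigma_n))$; since $Z(A[A_{n-1}])=\langle\Delta^2\rangle$ and $\rho_A(\Delta^2)$ is the product of the boundary twists (the chain relation), injectivity of $\rho_A$ gives $\varphi(s_i)=s_i^{\epsilon}\Delta^{2p}$ for some $p\in\Z$ independent of $i$. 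This is $\bar\gamma_p$ when $\epsilon=1$ and $\bar\chi\circ\bar\gamma_p$, up to the sign of $p$, when $\epsilon=-1$, which is precisely case~(2) with $\psi\in\langle\bar\chi\rangle$.

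I expect two steps to be the real difficulty. The first is the reduction-system analysis of the previous paragraphs: upgrading the bare relations among the $f_i$ into the statement that each $\SS(f_i)$ is a single curve, that these curves form a chain, and that the complementary pseudo-Anosov parts are trivial, is the long and delicate heart of the argument and is the source of the $n\ge6$ threshold in this geometric approach, the case $n=5$ needing the separate method of Chen--Kordek--Margalit. The second is the final transfer: the conjugacy produced above lives in $\MM(\Sigma_n)$ through $[H]$, which need not lie in $\Im\rho_A$, so concluding that $\varphi$ is conjugate to $\psi\circ\bar\gamma_p$ \emph{inside} $A[A_{n-1}]$ requires showing that $[H]$ can be chosen to normalize $\Im\rho_A$ and then using the Dyer--Grossman description of $\Aut(A[A_{n-1}])$ to absorb the induced automorphism into an inner one together with a factor of $\bar\chi$.
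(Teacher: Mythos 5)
First, a point of framing: the paper contains no proof of Theorem \ref{thm3_4} at all --- it is imported from Castel's thesis (for $n\ge 6$), from Chen--Kordek--Margalit (for $n\ge 5$) and from Orevkov, and the paper's later arguments use it, together with Castel's geometric theorem restated as Theorem \ref{thm3_6}, strictly as black boxes. Your outline does reconstruct the strategy the paper attributes to Castel --- push $\varphi$ into $\MM(\Sigma_n)$ through the faithful representation $\rho_A$ of Theorem \ref{thm3_3} and analyse canonical reduction systems --- but it has genuine gaps that keep it from being a proof. The largest is the one you yourself defer: showing that each $\SS(f_i)$ is a single class, that these classes form a chain, and that no pseudo-Anosov piece survives is not a step to be granted, it is the substance of Castel's Ast\'erisque memoir, which is precisely why the present paper quotes Theorem \ref{thm3_6} rather than reproving it. Moreover, the Nielsen--Thurston alternative you start from is wrong as stated: torsion-freeness of $\MM(\Sigma_n)$ does not force a nontrivial $f_i$ to be either reducible with $\SS(f_i)\neq\emptyset$ or pseudo-Anosov. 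Central multitwists along the boundary and their roots --- for instance $\rho_A(\Delta)$ itself, which satisfies $\rho_A(\Delta)^4=T_\partial$ when $n$ is odd --- have infinite order, are not pseudo-Anosov, and nevertheless have empty canonical reduction system by Theorem \ref{thm3_1}\,(2). Since the $\varphi(s_i)$ could a priori be conjugates of such periodic-type elements, this case must be excluded by a separate argument that your sketch does not contain. Finally, even completed, this route yields only $n\ge 6$; the case $n=5$ of the statement would still have to be quoted from Chen--Kordek--Margalit, as you note.

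The second genuine gap is the transfer back to $A[A_{n-1}]$, and here your text is internally inconsistent. In your third paragraph you conjugate by a change-of-coordinates homeomorphism $[H]$ and then assert that $T_{a_i}^{\epsilon}z=f_i$ ``lies in $\Im\rho_A$''; but membership in $\Im\rho_A$ is exactly what conjugation by $[H]$ destroys, since $[H]$ need not normalize $\Im\rho_A$ --- as you concede in your last paragraph without repairing it. There is also a parity trap: for $n$ even, $Z(\MM(\Sigma_n))=\langle T_{\partial_1}\rangle\times\langle T_{\partial_2}\rangle$ while $\Im\rho_A\cap Z(\MM(\Sigma_n))=\langle T_{\partial_1}T_{\partial_2}\rangle$, so knowing that $z$ is central does not place it in the image: one must still rule out $z=T_{\partial_1}^{m_1}T_{\partial_2}^{m_2}$ with $m_1\neq m_2$. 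It is instructive to compare with how the paper handles the same difficulty in the proof of Theorem \ref{thm2_2}: there the isotopy classes of the curves are first pinned down in the unpunctured surface by comparison with $\rho_A\circ\pi\circ\varphi$, and the conjugating element is then produced inside $\Ker(\theta)$, which by Theorem \ref{thm3_5}\,(1) equals $\bar\rho(\Ker(\pi))$ and hence corresponds to an element of the Artin group; that is what legitimises the conjugation. Your plan has no analogue of this mechanism for $\rho_A$, and your proposed repair (choose $[H]$ normalizing $\Im\rho_A$, then invoke Dyer--Grossman) is itself an unproved claim that would require the Birman--Hilden theory of symmetric mapping classes. So the proposal points in the right direction, but both of its decisive steps are missing.
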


Let $n\ge6$.
Pick a puncture $x$ in the interior of $\Sigma_n$ and consider the circles $d_1,\dots,d_n$ drawn in Figure \ref{fig3_2}.
Then by Proposition \ref{prop3_0} we have a geometric representation $\rho_D:A[D_n]\to\MM(\Sigma_n,x)$ which sends $t_i$ to $T_{d_i}$ for all $1\le i\le n$.
On the other hand, the embedding of $\Homeo^+(\Sigma_n,x)$ into $\Homeo^+(\Sigma_n)$ induces a surjective homomorphism $\theta:\MM(\Sigma_n,x)\to\MM(\Sigma_n)$ whose kernel is naturally isomorphic to $\pi_1(\Sigma_n,x)$ (see Birman \cite{Birma1}).
It is easily seen that
\[
\theta(T_{d_i})=T_{a_i}\text{ for }1\le i\le n-2\,,\ \theta(T_{d_{n-1}})=\theta(T_{d_n})=T_{a_{n-1}}\,,
\]
hence we have the following commutative diagram:
\begin{equation}\label{eq3_1}
\xymatrix{
1\ar[r]&\Ker(\pi)\ar[r]\ar[d]^{\bar \rho}&A[D_n]\ar[r]^\pi\ar[d]^{\rho_D}&A[A_{n-1}]\ar[r]\ar[d]^{\rho_A}&1\\
1\ar[r]&\Ker(\theta)\ar[r]&\MM(\Sigma_n,x)\ar[r]^\theta&\MM(\Sigma_n)\ar[r]&1}
\end{equation}
where we denote by $\bar\rho:\Ker(\pi)\to\Ker(\theta)$ the restriction of $\rho_D$ to $\Ker(\pi)$.

\begin{figure}[ht!]
\begin{center}
\parbox[c]{7.1cm}{\includegraphics[width=7cm]{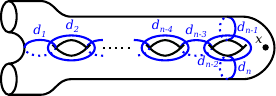}} \quad $n$ even

\bigskip
\parbox[c]{6.7cm}{\includegraphics[width=6.6cm]{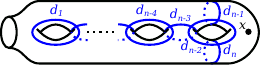}} \quad $n$ odd
\caption{Geometric representation of $A[D_n]$}\label{fig3_2}
\end{center}
\end{figure}

The proof of the following can be found in Perron--Vannier \cite[Theorem 1]{PerVan1} with few modifications.
As this result is central in our paper, for the sake of completeness we give a proof.
Note that our proof is a little shorter than that of Perron--Vannier \cite{PerVan1} because it uses results from Crisp--Paris \cite{CriPar1} which were not known and it does not need to deal with some Dehn twist along a boundary component, but our arguments are essentially the same.

\begin{thm}[Perron--Vannier \cite{PerVan1}]\label{thm3_5}
Let $n\ge4$.
\begin{itemize}
\item[(1)]
The homomorphism $\bar\rho:\Ker(\pi)\to\Ker(\theta)$ is an isomorphism.
\item[(2)]
The geometric representation $\rho_D:A[D_n]\to\MM(\Sigma_n,x)$ is injective.
\end{itemize}
\end{thm}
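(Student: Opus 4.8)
The plan is to deduce part (2) from part (1) by a diagram chase, so that the real work lies entirely in part (1). Indeed, suppose $\bar\rho$ is an isomorphism (in particular injective), and recall from Theorem \ref{thm3_3} that $\rho_A$ is injective. If $g\in A[D_n]$ satisfies $\rho_D(g)=1$, then commutativity of the diagram (\ref{eq3_1}) gives $\rho_A(\pi(g))=\theta(\rho_D(g))=1$, whence $\pi(g)=1$ and $g\in\Ker(\pi)$; but then $\bar\rho(g)=\rho_D(g)=1$ forces $g=1$. Thus $\rho_D$ is injective, proving (2). Everything therefore reduces to showing that $\bar\rho$ is an isomorphism.

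For part (1) I would first identify both groups as free groups of the same finite rank. By the Birman exact sequence (see Birman \cite{Birma1}), $\Ker(\theta)$ is isomorphic to $\pi_1(\Sigma_n,x)$, and since $\Sigma_n$ is a surface with nonempty boundary this is a free group; counting genus and boundary components shows its rank is $n-1$ in both the even and odd cases. On the other side, by Crisp--Paris \cite{CriPar1} the kernel $\Ker(\pi)$ is a free group of rank $n-1$, normally generated by $t_{n-1}t_n^{-1}$, with an explicit free basis consisting of conjugates of this element under the section $\iota$. Because $\bar\rho$ is a homomorphism between two free groups of the same finite rank $n-1$, and finitely generated free groups are Hopfian, it suffices to prove that $\bar\rho$ is surjective; the Hopfian property then upgrades surjectivity to bijectivity.

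The geometric core is therefore to show that the images under $\bar\rho$ of the Crisp--Paris generators of $\Ker(\pi)$ generate $\Ker(\theta)\cong\pi_1(\Sigma_n,x)$. The key observation is that $\bar\rho(t_{n-1}t_n^{-1})=T_{d_{n-1}}T_{d_n}^{-1}$, and that $d_{n-1}$ and $d_n$ are the two boundary circles of a regular neighborhood of a simple loop $\gamma_0$ based at $x$ (they become isotopic once $x$ is filled in, as recorded by $\theta(T_{d_{n-1}})=\theta(T_{d_n})$). Hence this element is exactly the point-pushing map along $\gamma_0$. Conjugating by $\rho_D(\iota(w))=\rho_A(w)$ transforms the point-push along $\gamma_0$ into the point-push along $\rho_A(w)(\gamma_0)$, so the images of the generators are point-pushes along the loops $\rho_A(w_j)(\gamma_0)$ for the relevant elements $w_j\in A[A_{n-1}]$. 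It then remains to read off from Figure \ref{fig3_2} that these loops generate $\pi_1(\Sigma_n,x)$.

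I expect the main obstacle to be precisely this last geometric verification: pinning down the loop $\gamma_0$ and checking that its translates under the mapping classes $\rho_A(w_j)$ form a generating set (ideally a free basis) of $\pi_1(\Sigma_n,x)$. This requires careful bookkeeping with the explicit curves $d_1,\dots,d_n$ of Figure \ref{fig3_2} and with the action of the $T_{a_i}$ on based loops, and it is here that the argument of Perron--Vannier \cite{PerVan1}, streamlined by the structural input from Crisp--Paris \cite{CriPar1}, does its real work.
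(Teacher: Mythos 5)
Your reduction of (2) to (1) and your reduction of (1) to surjectivity are both correct and coincide with the paper's own argument: part (2) follows from part (1) and the injectivity of $\rho_A$ by a chase in diagram (\ref{eq3_1}) (the paper invokes the five lemma), and since $\Ker(\pi)$ and $\Ker(\theta)\cong\pi_1(\Sigma_n,x)$ are both free of rank $n-1$ (by Crisp--Paris \cite{CriPar1} and Birman \cite{Birma1} respectively), the Hopfian property of finitely generated free groups reduces everything to surjectivity of $\bar\rho$. Your identification of $\bar\rho(t_{n-1}t_n^{-1})=T_{d_{n-1}}T_{d_n}^{-1}$ as a point-pushing map is also correct, and is exactly the starting point of the paper's argument (up to inversion: the paper uses $f_{n-1}=T_{d_{n-1}}^{-1}T_{d_n}$).

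However, the surjectivity itself --- which you correctly flag as ``the geometric core'' and ``the main obstacle'' --- is left as a plan rather than proved, so the proposal has a genuine gap precisely where the real content of the theorem lies. Moreover, your plan is heavier than necessary: it requires knowing the explicit Crisp--Paris free basis of $\Ker(\pi)$ (conjugates of $t_{n-1}t_n^{-1}$ by specific section elements) and then computing the action of the corresponding mapping classes $\rho_D(\iota(w_j))$ on the based loop $\gamma_0$, a bookkeeping task you do not carry out. The paper avoids both difficulties with one trick: it defines loops recursively by $f_{n-i}=T_{d_{n-i}}\,f_{n-i+1}T_{d_{n-i}}^{-1}\,f_{n-i+1}^{-1}$, starting from $f_{n-1}=\bar\rho(t_{n-1}^{-1}t_n)$. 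Each $f_{n-i}$ lies in $\Im(\bar\rho)$ \emph{automatically}, because if $f_{n-i+1}=\bar\rho(u)$ then $f_{n-i}=\bar\rho(t_{n-i}ut_{n-i}^{-1}u^{-1})$ and $t_{n-i}ut_{n-i}^{-1}u^{-1}\in\Ker(\pi)$ by normality of $\Ker(\pi)$ --- no free basis of $\Ker(\pi)$ is needed, only its rank and normality. The remaining geometric verification is then the concrete one of drawing the loops $f_{n-i}$ (Figures \ref{fig3_3} and \ref{fig3_4}) and observing that $f_1,\dots,f_{n-1}$ generate $\pi_1(\Sigma_n,x)$. To complete your proposal you would either have to carry out your translated-loop computation in full, or restructure it along these lines; as written, the statement is not yet proved.
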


\begin{proof}
Part (2) is a consequence of Part (1) because of the following.
Suppose $\bar\rho$ is an isomorphism.
Then, since $\rho_A$ is injective, $\rho_D$ is injective by the five lemma applied to diagram (\ref{eq3_1}).

Now, we prove Part (1).
We know from Crisp--Paris \cite[Proposition 2.3]{CriPar1} that $\Ker(\pi)$ is a free group of rank $n-1$.
We also know from Birman \cite{Birma1} that $\Ker(\theta)=\pi_1(\Sigma_n,x)$, which is also a free group of rank $n-1$.
Recall that a group $G$ is Hopfian if every surjective endomorphism $G\to G$ is an isomorphism.
It is well-known that free groups of finite rank are Hopfian (see de la Harpe \cite[Chapter III, Section 19]{Harpe1}), hence in order to show that $\bar\rho$ is an isomorphism it suffices to show that $\bar\rho$ is surjective.

Set $f_{n-1}=T_{d_{n-1}}^{-1}T_{d_n}$.
Note that $t_{n-1}^{-1}t_n\in\Ker(\pi)$ and $f_{n-1}=\bar\rho(t_{n-1}^{-1}t_n)$.
In particular $f_{n-1}\in\Im(\bar\rho)\subset\Ker(\theta)=\pi_1(\Sigma_n,x)$.
This element, seen as an element of $\pi_1(\Sigma_n,x)$, is represented by the loop drawn in Figure \ref{fig3_3}.
For $2\le i\le n-1$ we define $f_{n-i}\in\pi_1(\Sigma_n,x)\subset\MM(\Sigma_n,x)$ by induction on $i$ by setting $f_{n-i}=T_{d_{n-i}}\,f_{n-i+1}T_{d_{n-i}}^{-1}\,f_{n-i+1}^{-1}$.
The element $f_{n-i}$, viewed as an element of $\pi_1(\Sigma_n,x)$, is represented by the loop drawn in the left-hand side of Figure \ref{fig3_4} if $i=2j$ is even, and by the loop drawn in the right-hand side of Figure \ref{fig3_4} if $i=2j+1$ is odd, where we compose paths from right to left.
Observe that $f_1,\dots,f_{n-1}$ generate $\pi_1(\Sigma_n,x)$.
So, in order to show that $\bar\rho$ is surjective, it suffices to show that $f_{n-i}\in\Im(\bar\rho)$ for all $i\in\{1,\dots,n-1\}$.
We argue by induction on $i$.
We already know that $f_{n-1}=\bar\rho(t_{n-1}^{-1}t_n)\in\Im(\bar\rho)$.
Suppose $i\ge 2$ and $f_{n-i+1}\in\Im(\bar\rho)$.
Let $u\in\Ker(\pi)$ such that $f_{n-i+1}=\bar\rho(u)$.
Since $\Ker(\pi)$ is a normal subgroup of $A[D_n]$, we have $t_{n-i}ut_{n-i}^{-1}\in\Ker(\pi)$, hence $t_{n-i}ut_{n-i}^{-1}u^{-1}\in\Ker(\pi)$, and therefore 
\[
f_{n-i}=T_{d_{n-i}}\,f_{n-i+1}T_{d_{n-i}}^{-1}\,f_{n-i+1}^{-1}=\bar\rho(t_{n-i}ut_{n-i}^{-1}u^{-1})
\in\Im(\bar\rho)\,.\proved
\]
\end{proof}

\begin{figure}[ht!]
\begin{center}
\parbox[c]{8.1cm}{\includegraphics[width=8cm]{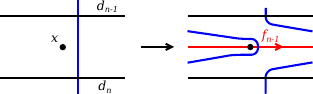}}
\hskip2cm
\parbox[c]{2.5cm}{\includegraphics[width=2.4cm]{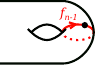}}
\caption{The loop $f_{n-1}\in\pi_1(\Sigma_n,x)$}\label{fig3_3}
\end{center}
\end{figure}

\begin{figure}[ht!]
\begin{center}
\parbox[c]{4.9cm}{\includegraphics[width=4.8cm]{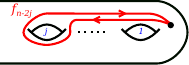}}
\hskip2cm
\parbox[c]{5.3cm}{\includegraphics[width=5.2cm]{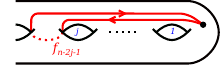}}
\caption{The loop $f_{n-i}\in\pi_1(\Sigma_n,x)$}\label{fig3_4}
\end{center}
\end{figure}

Our last preliminary on geometric representations is a result implicitly proved in Castel \cite[Section 3.2]{Caste1}, and it is in this theorem that we need the assumption $n \ge 6$.

\begin{thm}[Castel \cite{Caste1}]\label{thm3_6}
Let $n\ge6$.
Let $\varphi:A[A_{n-1}]\to\MM(\Sigma_n,x)$ be a non-cyclic homomorphism.
Then there exist generic circles $c_1,\dots,c_{n-1}$ in $\Sigma_n\setminus\{x\}$, $\varepsilon\in\{\pm 1\}$ and $g\in\MM(\Sigma_n,x)$ such that
\begin{itemize}
\item[(a)]
$|c_i\cap c_j|=1$ if $|i-j|=1$ and $|c_i\cap c_j|=0$ if $|i-j|\ge2$, for all $1\le i,j\le n-1$,
\item[(b)]
$g$ commutes with $T_{c_i}$ for all $1\le i\le n-1$,
\item[(c)]
$\varphi(s_i)=T_{c_i}^{\varepsilon}g$ for all $1\le i\le n-1$.
\end{itemize}
\end{thm}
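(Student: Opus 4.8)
The plan is to follow the strategy of Castel \cite{Caste2} and to read the statement as the geometric counterpart, through the representation $\rho_A$, of the algebraic classification in Theorem \ref{thm3_4}. Set $f_i=\varphi(s_i)$ for $1\le i\le n-1$. These mapping classes satisfy the defining relations of $A[A_{n-1}]$, namely $f_if_j=f_jf_i$ for $|i-j|\ge2$ and $f_if_{i+1}f_i=f_{i+1}f_if_{i+1}$ for $1\le i\le n-2$. Since the standard generators $s_1,\dots,s_{n-1}$ are pairwise conjugate in $A[A_{n-1}]$, the classes $f_1,\dots,f_{n-1}$ are pairwise conjugate in $\MM(\Sigma_n,x)$; in particular they all have the same Nielsen--Thurston type, and by Theorem \ref{thm3_1}(3) their essential reduction systems $\SS(f_i)$ are conjugate simplices of $\CC(\Sigma_n,x)$. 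The objective is to show that, away from the cyclic case, each $\SS(f_i)$ collapses to a single generic circle $c_i$, that the $c_i$ realise the intersection pattern (a), and that $f_i=T_{c_i}^\varepsilon g$ for a uniform sign $\varepsilon$ and a common element $g$ commuting with every $T_{c_i}$.

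First I would dispose of the periodic and pseudo-Anosov types. If the $f_i$ are pseudo-Anosov, then for $|i-j|\ge2$ the relation $f_if_j=f_jf_i$ forces $f_j$ into the centraliser of the pseudo-Anosov class $f_i$, which is virtually cyclic, so that commuting generators share their invariant foliations; propagating this along the (connected, for $n\ge6$) graph of pairs at distance $\ge2$ places the whole image in a virtually cyclic subgroup, and then the braid relation between adjacent generators forces all the $f_i$ to coincide, contradicting non-cyclicity. A parallel argument, using that periodic classes on $\Sigma_n$ are roots of the central twist(s), excludes the periodic type. This is one of the places where the hypothesis $n\ge6$ is used. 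Hence we may assume every $f_i$ is reducible with $\SS(f_i)\neq\emptyset$.

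Next I would extract the circles. For $|i-j|\ge2$, commutativity together with Theorem \ref{thm3_1}(3) gives $f_j(\SS(f_i))=\SS(f_jf_if_j^{-1})=\SS(f_i)$, so $f_j$ permutes $\SS(f_i)$; combining this with the remark that an essential reduction class is disjoint from any reduction class (applied to a power of $f_j$ fixing a given class of $\SS(f_i)$) shows that $\SS(f_i)\cup\SS(f_j)$ is a simplex, i.e. these reduction systems are disjoint. For adjacent indices the braid relation instead forces the reduction systems of $f_i$ and $f_{i+1}$ to be linked and to meet minimally. Cutting $\Sigma_n$ along the union of all the $\SS(f_i)$ and examining the classes induced on the complementary pieces (where each $f_i$ is trivial, periodic, or pseudo-Anosov), the non-cyclicity of $\varphi$ forces each $\SS(f_i)$ to consist of a single generic circle $c_i$ along which $f_i$ acts by a nonzero power of a Dehn twist, with the $c_i$ satisfying (a) via Proposition \ref{prop3_0} read through the braid and commuting relations.

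The hard part is the final rigidification, that is, producing the common factor $g$ and the uniform sign. Writing $f_i=T_{c_i}^{\varepsilon_i}g_i$ with $g_i$ supported away from $c_i$, I would feed the braid relation $f_if_{i+1}f_i=f_{i+1}f_if_{i+1}$ and Proposition \ref{prop3_0} back in to show first that $\varepsilon_i=\varepsilon_{i+1}$, so a single $\varepsilon\in\{\pm1\}$ works, and then that $g_i=g_{i+1}$ commutes with both $T_{c_i}$ and $T_{c_{i+1}}$; iterating along the chain yields one element $g$ with $f_i=T_{c_i}^\varepsilon g$ commuting with every $T_{c_i}$, which are precisely (b) and (c). This propagation is the main obstacle: one must control the complementary-piece data simultaneously for all $n-1$ generators and verify that the local identifications imposed by successive braid relations are globally compatible, and it is here, together with the exclusion step above, that the length of the chain and the precise topology of $\Sigma_n$ (hence $n\ge6$) are essential. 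Once $g$ and $\varepsilon$ are obtained, the statement follows.
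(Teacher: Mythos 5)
Your proposal sets out to re-derive the classification from scratch, running the Nielsen--Thurston and canonical-reduction-system machinery directly on $\MM(\Sigma_n,x)$. The paper does something much more modest: it caps the boundary component(s) of $\Sigma_n$ with marked disks to get a surjection $\varpi:\MM(\Sigma_n,x)\to\PP\MM(\hat\Sigma_n,\hat\PP)$ whose kernel is central (generated by the boundary twists), checks via the braid relations that $\varpi\circ\varphi$ is still non-cyclic, invokes Castel's classification theorem \cite[Theorem 1]{Caste1} for homomorphisms $A[A_{n-1}]\to\PP\MM(\hat\Sigma_n,\hat\PP)$ as a black box, and then transfers the conclusion back through $\varpi$, using the braid relations once more to show that the central correction factors $h_i\in\Ker(\varpi)$ do not depend on $i$. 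The theorem is attributed to Castel precisely because its substance is the monograph \cite{Caste1}; the paper's proof is only boundary-versus-puncture bookkeeping. What you propose instead amounts to reproving \cite[Theorem 1]{Caste1}, and your sketch does not do that: each of your three stages (exclusion of periodic and pseudo-Anosov types, collapse of the reduction systems, rigidification to a common $g$ and uniform $\varepsilon$) is itself a substantial technical chapter of \cite{Caste1}, and you explicitly leave the last one as ``the main obstacle'' without resolving it.

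Beyond this incompleteness there is a concrete false step. You claim that non-cyclicity forces each $\SS(f_i)$ to consist of a single generic circle $c_i$. This is not true, and the conclusion of the theorem itself shows why: the common factor $g$ may be a nontrivial multitwist, in which case $\SS(f_i)$ properly contains $[c_i]$. For instance, take $n$ even and $\varphi(s_i)=T_{d_i}T_e^{p}T_{\partial_1}^{p+q}T_{\partial_2}^{q}$ with $p\neq 0$ and the circles as in Figure \ref{fig6_4} (this is $\rho_D\circ\beta_{p,q}$, so it satisfies the hypotheses and the conclusion of Theorem \ref{thm3_6} with $g=T_e^{p}T_{\partial_1}^{p+q}T_{\partial_2}^{q}$); by Proposition \ref{prop3_2}, and as recorded in Case 1 of the proof of Theorem \ref{thm2_3}, one has $\SS(f_i)=\{[d_i],[e]\}$, which has two elements. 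Consequently the circle $c_i$ cannot be characterized as \emph{the} essential reduction class of $f_i$: one must separate the class that varies with $i$ from the classes common to all the $f_i$, which is part of what makes the analysis in \cite{Caste1} long. As it stands, your text is a plausible strategy outline for \cite{Caste1}, not a proof of Theorem \ref{thm3_6}.
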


\begin{proof}
Assume $n$ is even. 
Let $\partial_1, \partial_2$ be the two boundary components of $\Sigma_n$. 
We denote by $\hat \Sigma_n$ the closed surface obtained from $\Sigma_n$ by gluing a disk $D_1$ along $\partial_1$ and a disk $D_2$ along $\partial_2$. 
Moreover, we choose a point $\hat x_1$ in the interior of $D_1$ and a point $\hat x_2$ in the interior of $D_2$ and we set $\hat \PP = \{x, \hat x_1, \hat x_2\}$. 
Assume $n$ is odd. 
Let $\partial$ be the boundary component of $\Sigma_n$. 
We denote by $\hat \Sigma_n$ the closed surface obtained from $\Sigma_n$ by gluing a disk $D$ along $\partial$.
Moreover, we choose a point $\hat x$ in the interior of $D$ and we set $\hat \PP = \{ x, \hat x\}$. 
For each $n$ we denote by $\PP \MM (\hat \Sigma_n, \hat \PP)$ the subgroup of $\MM (\hat \Sigma_n, \hat \PP)$ formed by the isotopy classes of elements in $\Homeo^+ (\hat \Sigma_n, \hat \PP)$ which pointwise fix $\hat \PP$. 
The embedding of $\Sigma_n$ into $\hat \Sigma_n$ induces a surjective homomorphism $\varpi : \MM (\Sigma_n, x) \to \PP \MM (\hat \Sigma_n, \hat \PP)$. 
If $n$ is even, then the kernel of $\varpi$ is the free abelian group of rank $2$ generated by $T_{\partial_1}$ and $T_{\partial_2}$, and if $n$ is odd, then the kernel of $\varpi$ is the cyclic group generated by $T_\partial$. 
In both cases $\Ker (\varpi)$ is contained in the center of $\MM (\Sigma_n, x)$.

Let $\varphi : A [A_{n-1}] \to \MM (\Sigma_n, x)$ be a non-cyclic homomorphism. 
Assume that $\varpi \circ \varphi$ is cyclic. 
Then there exists $\hat g \in \PP \MM (\hat \Sigma_n, \hat \PP)$ such that $(\varpi \circ \varphi) (s_i) = \hat g$ for all $1 \le i \le n-1$.
Let $g \in \MM (\Sigma_n, x)$ be such that $\varpi (g) = \hat g$. 
For each $1 \le i \le n-1$ there exists $h_i \in \Ker (\varpi) \subset Z (\MM (\Sigma_n, x))$ such that $\varphi (s_i) = g h_i$. 
Let $1 \le i \le n-2$. 
Then
\[
g^3 h_i^2 h_{i+1} = \varphi (s_i s_{i+1} s_i) = \varphi (s_{i+1} s_i s_{i+1}) = g^3 h_i h_{i+1}^2\,,
\]
hence $h_i = h_{i+1}$. 
This shows that $\varphi (s_i) = g h_1$ for all $1 \le i \le n-1$, hence that $\varphi$ is cyclic: contradiction. 
So, $\varpi \circ \varphi$ is not cyclic.

To differentiate Dehn twists in $\MM (\Sigma_n, x)$ from those in $\PP \MM (\hat \Sigma_n, \hat \PP)$, for a circle $c$ in $\hat \Sigma_n \setminus \hat \PP$ we denote by $\hat T_c$ the Dehn twist in $\PP \MM (\hat \Sigma_n, \hat \PP)$ along $c$. 
By Castel \cite[Theorem 1]{Caste1} there exist generic circles $c_1, \dots, c_{n-1}$ in $\hat \Sigma_n \setminus \hat \PP$, $\varepsilon \in \{ \pm 1 \}$ and $\hat g \in \PP \MM (\hat \Sigma_n, \hat \PP)$ such that
\begin{itemize}
\item[(1)]
$|c_i \cap c_j| = 1$ if $|i-j|=1$ and $|c_i \cap c_j|=0$ if $|i-j| \ge 2$, for all $1 \le i, j \le n-1$,
\item[(2)]
$\hat g$ commutes with $\hat T_{c_i}$ for all $1 \le i \le n-1$,
\item[(3)]
$(\varpi \circ \varphi) (s_i) = \hat T_{c_i}^\varepsilon \hat g$ for all $1 \le i \le n-1$.
\end{itemize}
Clearly, we can choose each $c_i$ siting in the interior of $\Sigma_n$. 
Let $g \in \MM (\Sigma_n, x)$ be such that $\varpi (g) = \hat g$. 
It is easily shown with Castel \cite[Lemma 3.2.1]{Caste1} that $g$ and $T_{c_i}$ commute for all $1 \le i \le n-1$. 
Furthermore, for each $1 \le i \le n-1$, there exists $h_i \in \Ker (\varpi) \subset Z (\MM (\Sigma_n, x))$ such that $\varphi (s_i) = T_{c_i}^\varepsilon g h_i$. 
Let $1 \le i \le n-2$. 
Then
\[
T_{c_i}^\varepsilon T_{c_{i+1}}^\varepsilon T_{c_i}^\varepsilon g^3 h_i^2 h_{i+1} = \varphi (s_i s_{i+1} s_i) = \varphi (s_{i+1} s_i s_{i+1}) = T_{c_{i+1}}^\varepsilon T_{c_i}^\varepsilon T_{c_{i+1}}^\varepsilon g^3 h_i h_{i+1}^2 = T_{c_i}^\varepsilon T_{c_{i+1}}^\varepsilon T_{c_i}^\varepsilon g^3 h_i h_{i+1}^2\,,
\]
hence $h_{i+1} = h_i$. 
So, there exists $h \in \Ker(\varpi)$ such that $\varphi (s_i) = T_{c_i}^\varepsilon g h$ and $g h$ commutes with $T_{c_i}$ for all $1 \le i \le n-1$.
\end{proof}


\section{Homomorphisms from $A[D_n]$ to $A[A_{n-1}]$}\label{sec4}

\begin{proof}[Proof of Theorem \ref{thm2_1}]
Let $n\ge 5$.
Let $\varphi:A[D_n]\to A[A_{n-1}]$ be a homomorphism.
By precomposing $\varphi$ with $\iota : A [A_{n-1}] \to A [D_n]$, we obtain a homomorphism $\varphi \circ \iota : A[A_{n-1}] \to A [D_n] \to A [A_{n-1}]$, hence, by Theorem \ref{thm3_4}, one of the following two possibilities holds.
\begin{itemize}
\item
$\varphi\circ\iota$ is cyclic. 
\item
There exist $\psi\in\langle\bar\chi\rangle$ and $p\in\Z$ such that $\varphi\circ\iota$ is conjugate to $\psi\circ\bar\gamma_p$.
\end{itemize}

Suppose $\varphi\circ\iota$ is cyclic.
Then there exists $u\in A[A_{n-1}]$ such that $(\varphi\circ\iota)(s_i)=\varphi(t_i)=u$ for all $1\le i\le n-1$.
Moreover,
\[
\varphi(t_n)=\varphi(t_{n-2}t_n)\,\varphi(t_{n-2})\,\varphi(t_n^{-1}t_{n-2}^{-1})=\varphi(t_{n-2}t_n)\,
\varphi(t_1)\,\varphi(t_n^{-1}t_{n-2}^{-1})=\varphi(t_1)=u\,,
\]
hence $\varphi$ is cyclic.

So, up to conjugating and replacing $\varphi$ by $\varphi\circ\chi$ if necessary, we can assume that there exists $p\in\Z$ such that $\varphi\circ\iota=\bar\gamma_p$.
This means that $\varphi(t_i)=(\varphi\circ\iota)(s_i)=s_i\Delta^{2p}$ for all $1\le i\le n-1$, where $\Delta$ is the Garside element of $A[A_{n-1}]$.
Now we turn to show that $\varphi=\alpha_p$.

Set $Y=\{s_1,\dots,s_{n-3}\}$.
By Paris \cite[Theorem 5.1]{Paris2} the centralizer of $\langle s_1,\dots,s_{n-3},s_{n-1}\rangle$ in $A[A_{n-1}]$ is generated by $\Delta^2$, $\Delta_Y^2$ and $s_{n-1}$, where $\Delta_Y=\Delta_Y[A_{n-1}]$.
These three elements pairwise commute and generate a copy of $\Z^3$.
Set $u=\varphi(t_n)$.
Since $u$ commutes with $\varphi(t_i)=s_i\Delta^{2p}$ for all $i\in\{1,\dots,n-3,n-1\}$ and $\Delta^2$ in central in $A[A_{n-1}]$, $u$ belongs to the centralizer of $\langle s_1,\dots,s_{n-3},s_{n-1}\rangle$, hence there exist $k_1,k_2,k_3\in\Z$ such that $u=s_{n-1}^{k_1}\Delta_Y^{2k_2}\Delta^{2k_3}$.

It is well-known that $A[A_{n-1}]$ is naturally isomorphic to the mapping class group $\MM(\D,\PP)$, where $\D$ denotes the disk and $\PP=\{x_1,\dots,x_n\}$ is a set of $n$ punctures in the interior of $\D$.
In this identification $s_{n-1}^2$ corresponds to the Dehn twist along the circle $c_1$ depicted in Figure \ref{fig4_1}, $\Delta_Y^2$ corresponds to the Dehn twist along the circle $c_2$ depicted in the same figure, and $\Delta^2$ corresponds to the Dehn twist along a circle parallel to $\partial\D$.
By Proposition \ref{prop3_2} we have $\SS(u^2)\subseteq\{c_1,c_2\}$, where $c_1\in\SS(u^2)$ if and only if $k_1\neq0$ and $c_2\in\SS(u^2)$ if and only if $k_2\neq0$.
We know that $\varphi(t_1^2)=s_1^2\Delta^{4p}$, hence $\SS(\varphi(t_1^2))$ is formed by a single circle containing two marked points in its interior.
Since $t_1^2$ and $t_n^2$ are conjugate, $\varphi(t_1^2)$ and $\varphi(t_n^2)=u^2$ are conjugate, hence, by Theorem \ref{thm3_1}, $\SS(u^2)$ is also formed by a single circle containing two marked points in its interior.
It follows that $\SS(u^2)=\{c_1\}$, hence $k_1\neq0$ and $k_2=0$.
It remains to show that $k_1=1$ and $k_3=p$.

\begin{figure}[ht!]
\begin{center}
\includegraphics[width=5.2cm]{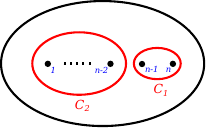}
\caption{Circles in the punctured disk}\label{fig4_1}
\end{center}
\end{figure}

From the equality $t_{n-2}t_nt_{n-2}=t_nt_{n-2}t_n$ it follows that $s_{n-2}s_{n-1}^{k_1}s_{n-2}\Delta^{4p+2k_3}=s_{n-1}^{k_1}s_{n-2}s_{n-1}^{k_1}\Delta^{2p+4k_3}$, hence $(s_{n-2}s_{n-1}^{k_1}s_{n-2})(s_{n-1}^{k_1}s_{n-2}s_{n-1}^{k_1})^{-1}=\Delta^{2k_3-2p}$.
We know from Paris \cite[Corollary 2.6]{Paris3} that $A_{\{s_{n-2},s_{n-1}\}}[A_{n-1}]\cap\langle\Delta\rangle=\{1\}$, hence $(s_{n-2}s_{n-1}^{k_1}s_{n-2})(s_{n-1}^{k_1}s_{n-2}s_{n-1}^{k_1})^{-1}=\Delta^{2k_3-2p}=1$.
Let $z:A[A_{n-1}]\to\Z$ be the homomorphism which sends $s_i$ to $1$ for all $1\le i\le n-1$.
We have
\[
0=z(1)=z((s_{n-2}s_{n-1}^{k_1}s_{n-2})(s_{n-1}^{k_1}s_{n-2}s_{n-1}^{k_1})^{-1})=1-k_1\,,
\]
hence $k_1=1$.
Moreover, $\Delta^{2k_3-2p}=1$ and $\Delta$ is of infinite order, thus $k_3=p$.
\end{proof}


\section{Homomorphisms from $A[A_{n-1}]$ to $A[D_n]$}\label{sec5}

The formula in the following lemma is a crucial point in various proofs, including that of Lemma \ref{lem5_3} and that of Theorem \ref{thmA2}.

\begin{lem}\label{lemA5}
Let $n \ge 1$. 
Then
\[
\Delta [A_n]^2 = (s_1 \cdots s_{n-1} s_n^2 s_{n-1} \cdots s_1) (s_2 \cdots s_{n-1} s_n^2 s_{n-1} \cdots s_2) \cdots (s_{n-1} s_n^2 s_{n-1}) s_n^2\,.
\]
\end{lem}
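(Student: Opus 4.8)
The plan is to argue by induction on $n$. For $n=1$ the statement reads $\Delta[A_1]^2=s_1^2$, which is immediate since $\Delta[A_1]=s_1$, so I may assume $n\ge 2$ and that the formula already holds for the Artin group of type $A_{n-1}$. Write $\Delta=\Delta[A_n]$ and introduce the two words $a=s_1s_2\cdots s_n$ and $b=s_ns_{n-1}\cdots s_1$. The first observation is cosmetic but decisive: splitting $s_n^2=s_n\cdot s_n$ in the leading factor of the right-hand side shows that this factor equals $(s_1\cdots s_n)(s_n\cdots s_1)=ab$. The second observation is structural: the product of all the remaining factors is literally the right-hand side of the asserted formula for the standard parabolic subgroup $\langle s_2,\dots,s_n\rangle$, which is an Artin group of type $A_{n-1}$ on the generators $s_2,\dots,s_n$. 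Denoting its Garside element by $\tilde\Delta$, the induction hypothesis therefore gives that this tail equals $\tilde\Delta^2$, so that the whole right-hand side $P$ satisfies $P=ab\,\tilde\Delta^2$.

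Two classical facts about $\Delta$ then close the argument. First, peeling the leftmost column off the standard product expression for the Garside element (the $A_n$ analogue of the expression recalled in Section~\ref{sec2}) gives the decomposition
\[
\Delta=(s_n\cdots s_1)(s_n\cdots s_2)\cdots(s_ns_{n-1})s_n=b\,\tilde\Delta,
\]
since the factors after the first are exactly the product expression for $\tilde\Delta$ on $\langle s_2,\dots,s_n\rangle$. Second, conjugation by the Garside element reverses the standard generators, $\Delta s_i\Delta^{-1}=s_{n+1-i}$ for $1\le i\le n$; applying this to $a$ yields $\Delta^{-1}a\Delta=s_n\cdots s_1=b$, that is $a\Delta=\Delta b$.

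With these in hand the proof telescopes:
\[
P=ab\,\tilde\Delta^2=a(b\tilde\Delta)\tilde\Delta=a\Delta\tilde\Delta=\Delta b\tilde\Delta=\Delta\cdot\Delta=\Delta^2,
\]
using $b\tilde\Delta=\Delta$ twice and $a\Delta=\Delta b$ once. The computational content is negligible; the only genuine choice is to set up the induction on the \emph{right-hand} parabolic $\langle s_2,\dots,s_n\rangle$ rather than on the standard one $\langle s_1,\dots,s_{n-1}\rangle$, because it is precisely the tail of the product (all factors but the first), and not a block involving only $s_1,\dots,s_{n-1}$, that reassembles into a Garside square. I therefore expect the only points requiring care to be the bookkeeping that identifies the tail with $\tilde\Delta^2$ and the reversal identity $a\Delta=\Delta b$; once these are in place the telescoping is automatic.
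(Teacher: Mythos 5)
Your proof is correct, and it shares the paper's skeleton (induction on $n$), but the mechanics are genuinely different. The paper applies the induction hypothesis to the \emph{left} parabolic $\langle s_1,\dots,s_{n-1}\rangle$: from the two factorizations $\Delta[A_n]=(s_1\cdots s_n)\,\Delta[A_{n-1}]=\Delta[A_{n-1}]\,(s_n\cdots s_1)$ it gets $\Delta[A_n]^2=(s_1\cdots s_n)\,\Delta[A_{n-1}]^2\,(s_n\cdots s_1)$, and then pushes $(s_n\cdots s_1)$ leftward through the inductive expression for $\Delta[A_{n-1}]^2$ via the shift relation $s_i(s_n\cdots s_1)=(s_n\cdots s_1)s_{i+1}$; this re-indexing is exactly what turns an expression in $s_1,\dots,s_{n-1}$ into your tail in $s_2,\dots,s_n$, while the leftover block $(s_1\cdots s_n)(s_n\cdots s_1)$ is the leading factor. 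You instead run the induction on the \emph{right} parabolic $\langle s_2,\dots,s_n\rangle$, so the tail needs no re-indexing, and you close with the factorization $\Delta=b\tilde\Delta$ (read off the product expression for $\Delta$) together with the reversal identity $a\Delta=\Delta b$ coming from $\Delta s_i\Delta^{-1}=s_{n+1-i}$. The trade is essentially which standard Garside facts get invoked: the paper needs the two-sided coset factorizations plus a word-level shift computation, while you need the Garside product expression plus the reversal automorphism; both packages are standard and are recalled (in their $A_{n-1}$ form) in Section \ref{sec2}, so neither route is longer or less rigorous than the other. One point you should make explicit: to apply the induction hypothesis inside $\langle s_2,\dots,s_n\rangle$ and to identify its Garside element $\tilde\Delta$ with the sub-product $(s_n\cdots s_2)(s_n\cdots s_3)\cdots(s_n s_{n-1})s_n$, use the homomorphism $A[A_{n-1}]\to A[A_n]$, $s_i\mapsto s_{i+1}$, under which $\Delta[A_{n-1}]\mapsto\tilde\Delta$; no injectivity of the parabolic inclusion is needed here, since an identity between words in $A[A_{n-1}]$ maps to the corresponding identity between their images.
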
  

\begin{proof}
We argue by induction on $n$. 
The case $n = 1$ is trivial, hence we can assume that $n \ge 2$ and that the induction hypothesis holds. 
Recall that 
\[
\Delta [A_n] = (s_1 \cdots s_n)\, \Delta [A_{n-1}] = \Delta [A_{n-1}]\, (s_n \cdots s_1)\,.
\]
Moreover, it is easily checked that $s_i (s_n \cdots s_1) = (s_n \cdots s_1) s_{i+1}$ for all $1 \le i \le n-1$.
By the induction hypothesis, 
\[
\Delta [A_{n-1}]^2 = (s_1 \cdots s_{n-2} s_{n-1}^2 s_{n-2} \cdots s_1) \cdots (s_{n-2} s_{n-1}^2 s_{n-2}) s_{n-1}^2\,.
\]
Hence
\begin{gather*}
\Delta [A_n]^2 =
(s_1 \cdots s_n) \Delta [A_{n-1}]^2 (s_n \cdots s_1) =\\
(s_1 \cdots s_n) \big( (s_1 \cdots s_{n-2} s_{n-1}^2 s_{n-2} \cdots s_1) \cdots (s_{n-2} s_{n-1}^2 s_{n-2}) s_{n-1}^2 \big) (s_n \cdots s_1) =\\
(s_1 \cdots s_n) (s_n \cdots s_1) \big( (s_2 \cdots s_{n-1} s_n^2 s_{n-1} \cdots s_2) \cdots (s_{n-1} s_n^2 s_{n-1}) s_n^2 \big) = \\
(s_1 \cdots s_{n-1} s_n^2 s_{n-1} \cdots s_1) \cdots (s_{n-1} s_n^2 s_{n-1}) s_n^2\,.
\end{gather*}
\end{proof}

Now, Lemmas \ref{lem5_1} to \ref{lem5_7} are preliminaries to the proof of Theorem \ref{thm2_2}.

\begin{lem}\label{lem5_1}
Let $n\ge6$.
Let $\varphi:A[A_{n-1}]\to A[D_n]$ be a homomorphism.
If $\pi\circ\varphi:A[A_{n-1}]\to A[A_{n-1}]$ is cyclic, then $\varphi$ is cyclic.
\end{lem}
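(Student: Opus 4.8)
The plan is to argue by contradiction, using the geometric representation $\rho_D$ together with Theorem \ref{thm3_6}. Set $\Phi=\rho_D\circ\varphi:A[A_{n-1}]\to\MM(\Sigma_n,x)$. Since $\rho_D$ is injective (Theorem \ref{thm3_5}), $\Phi(s_i)$ is independent of $i$ if and only if $\varphi(s_i)$ is, so $\varphi$ is cyclic if and only if $\Phi$ is cyclic. I would therefore assume that $\varphi$, hence $\Phi$, is not cyclic, and aim for a contradiction. By Theorem \ref{thm3_6} there exist generic circles $c_1,\dots,c_{n-1}$ in $\Sigma_n\setminus\{x\}$, a sign $\varepsilon\in\{\pm1\}$ and an element $g\in\MM(\Sigma_n,x)$ commuting with every $T_{c_i}$ such that $\Phi(s_i)=T_{c_i}^\varepsilon g$ for all $i$ and $|c_i\cap c_j|=1$ whenever $|i-j|=1$.

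Next I would push this picture down to $\MM(\Sigma_n)$ via the forgetful homomorphism $\theta$. The commutative diagram (\ref{eq3_1}) gives $\theta\circ\Phi=\rho_A\circ\pi\circ\varphi$. By hypothesis $\pi\circ\varphi$ is cyclic, so $(\pi\circ\varphi)(s_i)$ is a fixed element $\bar u$, and hence $\theta(\Phi(s_i))=\rho_A(\bar u)$ is independent of $i$. On the other hand, writing $\bar c_i$ for the image of $c_i$ after filling in the puncture $x$, one has $\theta(T_{c_i})=T_{\bar c_i}$; and since each $c_i$ is generic, bounding no disk containing $0$ or $1$ puncture, the circle $\bar c_i$ stays essential, so $T_{\bar c_i}\neq 1$. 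Thus $\theta(\Phi(s_i))=T_{\bar c_i}^\varepsilon\,\theta(g)$, and independence of $i$ forces $T_{\bar c_1}^\varepsilon=\cdots=T_{\bar c_{n-1}}^\varepsilon$, whence $\bar c_1\simeq\cdots\simeq\bar c_{n-1}$ in $\Sigma_n$.

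The contradiction then comes from intersection numbers. For adjacent indices, say $|i-j|=1$, we have $|c_i\cap c_j|=1$, so $c_i$ and $c_j$ meet transversally in a single point; counting signs, their algebraic intersection number equals $\pm1$, and this signed count is unchanged when $x$ is filled, so $\bar c_i$ and $\bar c_j$ have algebraic intersection $\pm1$ in $\Sigma_n$. But $\bar c_i\simeq\bar c_j$ forces their algebraic intersection to vanish, a contradiction. Hence $\Phi$, and therefore $\varphi$, is cyclic.

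The step I expect to require the most care is the passage through the forgetful map $\theta$: one must verify that each generic $c_i$ has essential image $\bar c_i$ (so that $\theta(T_{c_i})=T_{\bar c_i}$ is nontrivial, and so that equality of Dehn twists forces the underlying circles to be isotopic, treating the boundary-parallel case as well), and that the algebraic intersection number is genuinely preserved under filling in the puncture $x$. Everything else is a direct application of Theorems \ref{thm3_5} and \ref{thm3_6} together with the commutative diagram (\ref{eq3_1}).
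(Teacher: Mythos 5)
Your proof is correct, but it takes a genuinely different route from the paper's. The paper's argument is purely algebraic and much lighter: it sets $v_i=\varphi(s_is_1^{-1})$ for $i=3,4$, observes that the hypothesis forces $v_3,v_4\in\Ker(\pi)$, and uses the fact that $\Ker(\pi)$ is a free group (Crisp--Paris) in which a braid relation $v_3v_4v_3=v_4v_3v_4$ --- obtained from $s_3s_4s_3=s_4s_3s_4$ because $s_1$ commutes with $s_3$ and $s_4$ --- forces $v_3$ and $v_4$ to commute, hence $v_3=v_4$, hence $\varphi(s_3)=\varphi(s_4)$; Castel's Lemma 3.1.1 then yields cyclicity. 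Your argument instead runs everything through $\rho_D$, invokes Theorem \ref{thm3_6} (the deepest external input of the paper) for the non-cyclic case, pushes down by $\theta$, and derives the contradiction from algebraic intersection numbers: adjacent $c_i,c_j$ meet transversally once, so $\hat\imath(\bar c_i,\bar c_j)=\pm1$ in $\Sigma_n$, while equality of the twists $T_{\bar c_i}$ downstairs would force $\bar c_i\simeq\bar c_j$, hence homologous curves with vanishing pairing (the intersection form on an oriented surface is alternating). The details you flag are the right ones and do close: genericity of $c_i$ in $(\Sigma_n,x)$, where $x$ is the only puncture, rules out a null-homotopic image, so $T_{\bar c_i}\neq 1$; and equal nontrivial twists force isotopic curves, the boundary-parallel case being handled by centrality (a twist about a boundary-parallel curve is central with empty essential reduction system, a twist about a generic curve is not, by Proposition \ref{prop3_2}). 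Also note that by Proposition \ref{prop3_0} the braid relation between $T_{c_i}^{\varepsilon}$ and $T_{c_{i+1}}^{\varepsilon}$ gives $i([c_i],[c_{i+1}])=1$, so transversal minimal-position representatives exist. What the comparison buys: the paper's proof is short, self-contained modulo freeness of $\Ker(\pi)$, and does not need $n\ge 6$ for this step, whereas your proof imports Theorems \ref{thm3_5} and \ref{thm3_6} (hence genuinely needs $n\ge 6$) and more point-set care; on the other hand, your argument is a natural warm-up for, and structurally parallel to, the way Theorem \ref{thm2_2} itself is proved, so within the paper's overall economy it uses no machinery that is not already required elsewhere. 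There is no circularity: Theorem \ref{thm3_6} does not depend on this lemma.
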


\begin{proof}
Assume $\pi\circ\varphi$ is cyclic.
Then there exists $u\in A[A_{n-1}]$ such that $(\pi\circ\varphi)(s_i)=u$ for all $1\le i\le n-1$.
For $3\le i\le n-1$ we set $v_i=\varphi(s_is_1^{-1})$.
We have $\pi(v_i)=uu^{-1}=1$, hence $v_i\in\Ker(\pi)$.
We have
\[
(s_3s_1^{-1})(s_4s_1^{-1})(s_3s_1^{-1})=s_3s_4s_3s_1^{-3}=s_4s_3s_4s_1^{-3}=(s_4s_1^{-1})(s_3s_1^{-1})
(s_4s_1^{-1})\,,
\]
hence $v_3v_4v_3=v_4v_3v_4$.
Since $\Ker(\pi)$ is a free group (see Crisp--Paris \cite[Proposition 2.3]{CriPar1}) and two elements in a free group either freely generate a free group or commute, the existence of such equality implies that $v_3v_4=v_4v_3$.
It follows that $v_3v_4v_3=v_3v_4^2$, hence $v_3=v_4$, and therefore
\[
\varphi(s_3)\,\varphi(s_1)^{-1}=v_3=v_4=\varphi(s_4)\,\varphi(s_1)^{-1}\,.
\]
So, $\varphi(s_3)=\varphi(s_4)$.
We conclude by Castel \cite[Lemma 3.1.1]{Caste1} that $\varphi$ is cyclic.
\end{proof}

Let $n\ge6$.
If $n$ is odd, then $\Sigma_n$ has one boundary component that we denote by $\partial$, and we denote by $T_\partial$ the Dehn twist along $\partial$. 
If $n$ is even, then $\Sigma_n$ has two boundary components that we denote by $\partial_1$ and $\partial_2$, and we denote by $T_{\partial_1}$ and $T_{\partial_2}$ the Dehn twists along $\partial_1$ and $\partial_2$, respectively.
It is known that the center of $\MM(\Sigma_n)$, denoted by $Z(\MM(\Sigma_n))$, is the cyclic group generated by $T_{\partial}$ if $n$ is odd, and it is a free abelian group of rank two generated by $T_{\partial_1}$ and $T_{\partial_2}$ if $n$ is even (see Paris--Rolfsen \cite[Theorem 5.6]{ParRol1} for example).

\begin{lem}\label{lem5_2}
Let $n\ge2$.
Let $f\in\MM(\Sigma_n)$ such that $fT_{a_i}^2=T_{a_i}^2f$ for all $1\le i\le n-1$.
Then $f^2\in Z(\MM(\Sigma_n))$.
\end{lem}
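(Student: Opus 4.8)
The plan is to first reduce the commutation hypothesis to the statement that $f$ fixes the isotopy class of each curve $a_i$, and then to analyze the mapping classes fixing this (filling) chain of curves by means of the Alexander method, after capping the boundary so as to collapse the center.

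First I would compute the essential reduction system of each $T_{a_i}^2$. Since $a_i$ is a generic circle, Proposition \ref{prop3_2} (applied with $\AA=\{[a_i]\}$, exponent $k_1=2$, and $f_0=\id\in Z(\MM(\Sigma_n))$) gives $\SS(T_{a_i}^2)=\{[a_i]\}$. The hypothesis $fT_{a_i}^2=T_{a_i}^2f$ rewrites as $f\,T_{a_i}^2\,f^{-1}=T_{a_i}^2$, so Theorem \ref{thm3_1}(3) yields $f(\SS(T_{a_i}^2))=\SS(T_{a_i}^2)$, that is $f([a_i])=[a_i]$ for every $1\le i\le n-1$. Thus $f$ fixes the isotopy class of each curve in the chain $a_1,\dots,a_{n-1}$.

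Next I would pass to a closed surface in order to quotient out the center. As in the proof of Theorem \ref{thm3_6}, I cap each boundary component of $\Sigma_n$ with a once-punctured disk, obtaining a closed surface $\hat\Sigma_n$ equipped with a set $\hat\PP$ of marked points ($\hat\PP=\{\hat x\}$ if $n$ is odd, $\hat\PP=\{\hat x_1,\hat x_2\}$ if $n$ is even), and I let $\varpi:\MM(\Sigma_n)\to\MM(\hat\Sigma_n,\hat\PP)$ be the induced homomorphism. By the capping exact sequence its kernel is generated by the boundary twists $T_\partial$ (resp.\ $T_{\partial_1},T_{\partial_2}$), which by the description of $Z(\MM(\Sigma_n))$ recalled just before the statement is exactly $Z(\MM(\Sigma_n))$. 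Writing $\bar f=\varpi(f)$ and letting $\bar a_i$ denote the image of $a_i$, we then have $\bar f([\bar a_i])=[\bar a_i]$ for all $i$, while the family $\bar a_1,\dots,\bar a_{n-1}$ fills $\hat\Sigma_n$: its complement consists of disks together with the once-punctured disks carrying the points of $\hat\PP$.

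Finally I would invoke the Alexander method (Farb--Margalit \cite{FarMar1}). Since $\bar f$ fixes each \emph{unoriented} class $[\bar a_i]$, it either preserves or reverses the orientation of $\bar a_i$; in every case $\bar f^2$ preserves the orientation of each $\bar a_i$, so $\bar f^2$ fixes every $[\bar a_i]$ as an \emph{oriented} curve. As the $\bar a_i$ fill $\hat\Sigma_n$ with complementary regions disks and once-marked disks, the oriented Alexander method forces $\bar f^2=\id$ in $\MM(\hat\Sigma_n,\hat\PP)$. Hence $f^2\in\Ker(\varpi)=Z(\MM(\Sigma_n))$, as required. The main obstacle is this last step: one must verify that the chain $\bar a_1,\dots,\bar a_{n-1}$ genuinely fills $\hat\Sigma_n$, so that the hypotheses of the Alexander method are met, and that fixing every oriented curve of this family forces the trivial mapping class — equivalently, that the only nontrivial mapping class fixing all unoriented classes $[\bar a_i]$ is the hyperelliptic involution, which has order two.
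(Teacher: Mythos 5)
Your proof is correct, and it rests on the same pivotal observation as the paper's — commutation with $T_{a_i}^2$ forces $f([a_i])=[a_i]$, and passing to $f^2$ kills any possible orientation reversal of the $a_i$ — but it finishes along a genuinely different route. The paper never leaves $\Sigma_n$: once $f^2$ preserves each $a_i$ with orientation, it isotopes $f^2$ to the identity on a closed regular neighborhood $\Sigma'$ of $\bigcup_{i=1}^{n-1}a_i$, observes that the complement of $\Sigma'$ in $\Sigma_n$ is a boundary-parallel cylinder $C$ (two cylinders when $n$ is even), and concludes $f^2\in\MM(C)=\langle T_\partial\rangle=Z(\MM(\Sigma_n))$. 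You instead cap the boundary components by once-marked disks, identify $Z(\MM(\Sigma_n))$ with the kernel of the capping homomorphism $\varpi$, and invoke the Alexander method on $(\hat\Sigma_n,\hat\PP)$ to get $\varpi(f)^2=\id$. Both are sound, and your "main obstacle" is indeed settled: the complement in $\Sigma_n$ of a regular neighborhood of the chain consists of one or two boundary-parallel cylinders, which become once-marked disks after capping, so the chain fills $\hat\Sigma_n$ with complementary regions of the allowed type. What your version buys is that the endgame is a citable black box (the Alexander method of Farb--Margalit \cite{FarMar1}, plus the capping sequence), with the filling hypothesis made explicit; what the paper's version buys is self-containment — it needs neither the capping sequence nor the formal hypotheses of the marked Alexander method (pairwise minimal position, pairwise non-isotopy, no triple points), since the regular-neighborhood argument encodes them implicitly.

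One small caveat: your first step invokes Proposition \ref{prop3_2}, which is stated only for surfaces of genus $\ge 2$, hence covers only $n\ge 5$, whereas the lemma is asserted for all $n\ge 2$. For the small cases you should instead deduce $f([a_i])=[a_i]$ the way the paper does: from $T_{f(a_i)}^2=fT_{a_i}^2f^{-1}=T_{a_i}^2$ and the injectivity of $[a]\mapsto T_a^k$ for $k\neq 0$ (Farb--Margalit \cite{FarMar1}, Section 3.3). This substitution costs nothing, removes the genus restriction, and is anyway immaterial for the paper's use of the lemma, which is only for $n\ge 6$.
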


\begin{proof}
Assume $n$ is odd.
The case where $n$ is even can be proved in the same way.
Let $f\in\MM(\Sigma_n)$ such that $fT_{a_i}^2=T_{a_i}^2f$ for all $1\le i\le n-1$.
Since $fT_{a_i}^2f^{-1}=T_{a_i}^2$ we have $f([a_i])=[a_i]$ (see Farb--Margalit \cite[Section 3.3]{FarMar1}).
The mapping class $f$ may reverse the orientation of each $a_i$ up to isotopy, but $f^2$ preserves the orientation of all $a_i$ up to isotopy, hence $f^2$ can be represented by an element of $\Homeo^+(\Sigma_n)$ which is the identity on a (closed) regular neighborhood $\Sigma'$ of $\bigcup_{i=1}^{n-1} a_i$.
We observe that $\Sigma'$ is a surface of genus $\frac{n-1}{2}$ with one boundary component, $\partial'$, and that $\partial\cup\partial'$ bounds a cylinder $C$.
This implies that $f^2\in\MM(C)\subset\MM(\Sigma_n)$.
Since $\MM(C)=\langle T_{\partial}\rangle=Z(\MM(\Sigma_n))$, we conclude that $f^2\in Z(\MM(\Sigma_n))$.
\end{proof}

\begin{lem}\label{lem5_3}
Let $n\ge 3$.
We set $m=n-1$ if $n$ is odd and $m=n-2$ if $n$ is even.
Let $1\le k\le m$.
Let $c$ be a generic circle of $\Sigma_n\setminus\{x\}$ such that $c\cap d_i=\emptyset$ for $1\le i \le k-2$, $|c\cap d_{k-1}|=1$ if $k\ge 2$, $c\cap d_k=\emptyset$, and $c$ is isotopic to $d_k$ in $\Sigma_n$.
Then there exists $g\in\Ker(\theta)$ such that $g([d_i])=[d_i]$ for all $1\le i\le k-1$ and $g([c])=[d_k]$.
\end{lem}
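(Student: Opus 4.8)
The plan is to work entirely inside the Birman exact sequence
$1 \to \pi_1(\Sigma_n,x) \to \MM(\Sigma_n,x) \xrightarrow{\theta} \MM(\Sigma_n) \to 1$,
identifying $\Ker(\theta)$ with the point-pushing subgroup $\pi_1(\Sigma_n,x)$ (as in the proof of Theorem \ref{thm3_5}), and to produce $g$ as an explicit point-pushing mapping class acting on circles. First I would record the elementary topological picture: since $c$ is isotopic to $d_k$ in $\Sigma_n$ and $c\cap d_k=\emptyset$, the two circles cobound an annulus $A$ in $\Sigma_n$. If $x\notin A$ then $c$ and $d_k$ are already isotopic in $\Sigma_n\setminus\{x\}$, so $[c]=[d_k]$ and $g=\id$ works; thus I may assume that $x$ lies in the interior of $A$, which is the only substantial case. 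The goal is then to exhibit $g\in\Ker(\theta)$ fixing $[d_1],\dots,[d_{k-1}]$ with $g([c])=[d_k]$.

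The engine is the standard fact that pushing the puncture once around a suitable loop slides $c$ across $x$ onto its parallel copy $d_k$. Concretely, in a once-punctured-torus neighbourhood $R$ of $c\cup d_{k-1}\cup d_k$ (here $d_{k-1}$ meets each of $c,d_k$ once, while $c$ and $d_k$ are parallel with $x$ between them), the point-push $P_0$ along a loop based at $x$ and parallel to $d_{k-1}$ can be written as $T_{\ell_1}T_{\ell_2}^{-1}$, where $\ell_1,\ell_2$ are two parallel copies of $d_{k-1}$ bounding an annular neighbourhood of that loop. A direct verification by the Alexander method shows $P_0([d_{k-1}])=[d_{k-1}]$ and $P_0([c])=[d_k]$. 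This local move is the $k\ge2$ prototype; the base case $k=1$ (no curves to preserve) follows from the general fact that the point-pushing subgroup acts transitively on the set of isotopy classes in $\Sigma_n\setminus\{x\}$ that fill to a fixed essential class of $\Sigma_n$, which is precisely the statement that some $g\in\Ker(\theta)$ carries $[c]$ to $[d_k]$.

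The hard part is to keep all of $d_1,\dots,d_{k-1}$ fixed. The loop used by $P_0$ is forced to run parallel to $d_{k-1}$, and since $d_{k-2}$ crosses $d_{k-1}$ once, this loop necessarily meets $d_{k-2}$; hence $P_0$ disturbs $[d_{k-2}]$ and, further down, the rest of the chain. I would resolve this by normalising the configuration and then correcting. Because $\Ker(\theta)$ is normal in $\MM(\Sigma_n,x)$, I may conjugate by any $\Phi\in\MM(\Sigma_n,x)$ with $\Phi([d_i])=[d_i]$ for $1\le i\le k-1$ and $\Phi([c])=[d_k]$; such a $\Phi$ is produced by the change-of-coordinates principle, using that $(d_1,\dots,d_{k-1},c)$ and $(d_1,\dots,d_{k-1},d_k)$ are configurations of the same topological type in $\Sigma_n$ (a path with a single leaf attached at $d_{k-1}$, the leaf being isotopic in $\Sigma_n$ to $d_k$). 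Then $\theta(\Phi)$ stabilises each of $[\bar d_1],\dots,[\bar d_{k-1}],[\bar d_k]$ in $\MM(\Sigma_n)$, so the task reduces to selecting, among the $\pi_1(\Sigma_n,x)$ lifts of $\theta(\Phi)$, one lift $\Psi$ that fixes $d_1,\dots,d_{k-1}$ and $c$; the element $g=\Phi\Psi^{-1}\in\Ker(\theta)$ is then the desired map.

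Realising this lift is the main technical obstacle, and amounts to correcting the disturbance of the local slide on $d_{k-2},d_{k-3},\dots$ one curve at a time, by point-pushes supported away from $c,d_k,d_{k-1}$ (so that the assignment $[c]\mapsto[d_k]$ is preserved), the process terminating because the chain $d_1,\dots,d_{k-1}$ is finite. Here I expect the explicit fork geometry of Figure \ref{fig3_2}, the nested point-push elements $f_1,\dots,f_{n-1}$ from the proof of Theorem \ref{thm3_5}, and the intersection-number computations of Proposition \ref{prop3_0} to be exactly the tools needed to show that each correction lies in $\Ker(\theta)$ and that the relevant intersection numbers collapse as required; this bookkeeping, rather than any single conceptual step, is where the real work lies.
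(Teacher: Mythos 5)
Your opening reduction is sound (the annulus between $c$ and $d_k$, the trivial case $x\notin A$, and the identification of $\Ker(\theta)$ with the point-pushing subgroup), and it is even true that the element $g$ ultimately lies in $\pi_1(\Sigma_n,x)$, so a point-push is indeed what one is looking for. The problem is that everything after this reduction is deferred rather than proved. You yourself observe that your engine $P_0$, a push along a loop parallel to $d_{k-1}$, necessarily disturbs $[d_{k-2}]$ and hence the whole chain; your repair -- find $\Phi$ by change of coordinates, then select a lift $\Psi$ of $\theta(\Phi)$ fixing $d_1,\dots,d_{k-1},c$ by ``correcting the disturbance one curve at a time'' with pushes supported away from $c,d_k,d_{k-1}$ -- is never constructed. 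No argument is given that a push correcting $[d_{k-2}]$ exists at all, that it can be chosen not to re-disturb the curves already arranged (a loop that moves $d_{k-2}$ must cross $d_{k-2}$, and controlling its effect on $d_{k-3},\dots,d_1$ is the same problem one level down), or that the process converges to a lift with all the required properties simultaneously. Since ``the point-pushing subgroup acts transitively on such configurations relative to a fixed chain'' is essentially a restatement of the lemma itself, declaring this step to be bookkeeping is circular: the bookkeeping \emph{is} the lemma. Moreover, even your local engine claim ($P_0([d_{k-1}])=[d_{k-1}]$ and $P_0([c])=[d_k]$, ``by the Alexander method'') is left unverified, and it is genuinely delicate: whether a given push carries $[c]$ to $[d_k]$ depends on how the pushing loop sits relative to the puncture and the annulus, which is exactly the kind of question that forces the multi-case cut-surface analyses in Lemmas \ref{lem5_5} and \ref{lem6_1} of the paper.

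For contrast, the paper's proof of Lemma \ref{lem5_3} sidesteps any correction process by a single algebraic construction. After normalizing the configuration (the cylinder $C$ between $c$ and $d_k$ containing $x$, a regular neighborhood of $\big(\bigcup_{i=1}^{k-1}d_i\big)\cup C$, and an unwinding of $d_{k-1}$ by twists along $c$ and $d_k$, as in Figure \ref{fig5_1}), the circles $d_1,\dots,d_{k-1},d_k,c$ form a standard $D_{k+1}$-fork, giving geometric representations $\psi_1:A[D_{k+1}]\to\MM(\Sigma_n,x)$ and $\psi_2:A[A_k]\to\MM(\Sigma_n,x)$. One then sets $g=\psi_1(\Delta_{D,k})\,\psi_2(\Delta_{A,k}^{-2})$: the known conjugation action of the Garside elements ($\Delta_{D,k}$ fixes $t_1,\dots,t_{k-1}$ and swaps the fork tips when $k$ is even; $\Delta_{A,k}^2$ is central) yields $g([d_i])=[d_i]$ for $i\le k-1$ and $g([c])=[d_k]$ \emph{simultaneously, by construction}, and $g\in\Ker(\theta)$ follows from the word-level identity of Lemma \ref{lemA5} (in effect $\pi(\Delta_D)=\Delta_A^2$) once one notes $\theta(T_c)=\theta(T_{d_k})$. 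A parity issue (the $D$-type Garside element swaps the fork tips only when $k+1$ is odd) forces the $k$ odd case to enlarge the picture to a $D_{k+2}$-configuration with an auxiliary circle $e$ -- a subtlety invisible to your scheme. The lesson is that the simultaneous preservation of the chain, which your proposal treats as routine, is the entire content of the lemma, and the paper handles it not by induction on disturbances but by choosing $g$ inside a group where its action on all the relevant curves is dictated by Artin-group relations.
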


\begin{proof}
We identify $D_3$ with $A_3$ in this proof to treat the cases $k=2$ and $k=1$.
We first assume that $k$ is even.
If $c$ is isotopic in $\Sigma_n\setminus\{x\}$ to $d_k$, then it suffices to take $g=\id$.
So, we can assume that $c$ and $d_k$ are not isotopic in $\Sigma_n\setminus\{x\}$.
Since $c$ and $d_k$ are isotopic in $\Sigma_n$, by Epstein \cite[Lemma 2.4]{Epste1} there exists a cylinder $C$ in $\Sigma_n$ whose boundary components are $d_k$ and $c$.
Since $c$ and $d_k$ are not isotopic in $\Sigma_n\setminus\{x\}$, this cylinder must contain the puncture $x$.

Let $\Sigma'$ be a regular neighborhood of $\big(\bigcup_{i=1}^{k-1}d_i\big)\cup C$.
The surface $\Sigma'$ contains the cylinder $C$ with boundaries $c$ and $d_k$, having the puncture $x$ in it, and $d_{k-1}$ intersects $c$ and $d_k$ once. 
Hence an arc of the curve $d_{k-1}$ connects a point on $c$ with a point on $d_k$ within the cylinder $C$, and it may wind around the cylinder in different ways (see Figure \ref{fig5A1}).
However, by applying suitable Dehn twists about $c$ and $d_k$, one can unwind this arc to the
simplest case, shown in Figure \ref{fig5_1}. 
Hence, up to homeomorphism of the surface $\Sigma_n$, we may assume that the circles $d_1, \dots, d_k, c$ are arranged as in Figure \ref{fig5_1}.

\begin{figure}[ht!]
\begin{center}
\includegraphics[width=4.2cm]{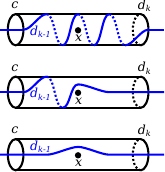}
\caption{Intersection of $C$ with $d_{k-1}$}\label{fig5A1}
\end{center}
\end{figure}

\begin{figure}[ht!]
\begin{center}
\includegraphics[width=6.6cm]{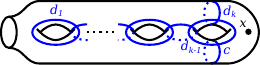}
\caption{Regular neighborhood of $\big(\bigcup_{i=1}^{k-1}d_i\big)\cup C$, case where $k$ is even}\label{fig5_1}
\end{center}
\end{figure}

By Proposition \ref{prop3_0} there are homomorphisms $\psi_1:A[D_{k+1}]\to\MM(\Sigma_n,x)$ and $\psi_2:A[A_k]\to\MM(\Sigma_n,x)$ defined by
\begin{gather*}
\psi_1(t_i)=T_{d_i}\text{ for }1\le i\le k\,,\ \psi_1(t_{k+1})=T_{c}\,,\\
\psi_2(s_i)=T_{d_i}\text{ for }1\le i\le k-1\,,\ \psi_2(s_k)=T_{c}\,.
\end{gather*}
We denote by $\Delta_{D,k}$ the Garside element of $A[D_{k+1}]$ and by $\Delta_{A,k}$ the Garside element of $A[A_k]$, and we set $g=\psi_1(\Delta_{D,k})\,\psi_2(\Delta_{A,k}^{-2})$.
We have $\Delta_{D,k}t_i\Delta_{D,k}^{-1}=t_i$ for all $1\le i\le k-1$, $\Delta_{D,k}t_{k+1}\Delta_{D,k}^{-1}=t_k$ and $\Delta_{A,k}^2s_i\Delta_{A,k}^{-2}=s_i$ for all $1\le i\le k$, hence $gT_{d_i}g^{-1}=T_{g(d_i)}=T_{d_i}$ for all $1\le i\le k-1$ and $gT_cg^{-1}=T_{g(c)}=T_{d_k}$.
It follows that $g([d_i])=[d_i]$ for all $1\le i\le k-1$ and $g([c])= [d_k]$ (see Farb--Margalit \cite[Fact 3.6]{FarMar1}).

Since $c$ and $d_k$ are isotopic in $\Sigma_n$, the corresponding Dehn twists $T_c$ and $T_{d_k}$ are
equal in $\MM (\Sigma_n)$, hence for $T_c$ and $T_{d_k}$, viewed on the surface $\Sigma_n \setminus \{ x \}$, we have $\theta (T_c) = \theta (T_{d_k})$.
Moreover,
\begin{gather*}
\Delta_{D,k}=(t_1\cdots t_{k-1}t_kt_{k+1}t_{k-1}\cdots t_1)\cdots(t_{k-1}t_kt_{k+1}t_{k-1})(t_kt_{k+1})\,,\\
\Delta_{A,k}^2=(s_1\cdots s_{k-1}s_k^2s_{k-1}\cdots s_1)\cdots(s_{k-1}s_k^2s_{k-1}) s_k^2\,,
\end{gather*}
(see Lemma \ref{lemA5} for the second equality), hence $\theta (\psi_1(\Delta_{D,k})) = \theta ( \psi_2 (\Delta_{A,k}^{2}))$, and therefore $\theta(g)=1$.
So, $g\in\Ker(\theta)$.

Now, assume $k$ is odd.
If $c$ is isotopic in $\Sigma_n\setminus\{x\}$ to $d_k$, then we can take $g=\id$.
So, we can assume that $c$ and $d_k$ are not isotopic in $\Sigma_n\setminus\{x\}$.
Since $c$ and $d_k$ are isotopic in $\Sigma_n$, there exists a cylinder $C$ in $\Sigma_n$ whose boundary components are $d_k$ and $c$.
Since $c$ and $d_k$ are not isotopic in $\Sigma_n\setminus\{x\}$, this cylinder must contain the puncture $x$.
Let $\Sigma'$ be a closed regular neighborhood of $\big(\bigcup_{i=1}^{k-1}d_i\big)\cup C$.
Then $\Sigma'$ is a surface of genus $\frac{k-1}{2}$ with two boundary components and the circles $d_1,\dots, d_{k-1},d_k,c$ are arranged as shown in Figure \ref{fig5_2}.
Since $k\le m$ and $k$ is odd, $\frac{k-1}{2}$ is strictly less than the genus of $\Sigma_n$, hence we can choose a sub-surface $\Sigma''$ of $\Sigma_n$ of genus $\frac{k+1}{2}$, with one boundary component, and containing $\Sigma'$.
We can also choose a generic circle $e$ in $\Sigma''\setminus\{x\}$ such that $|e\cap d_1|=1$, $|e\cap c|=1$ if $k=1$, $e\cap d_i=\emptyset$ for all $2\le i\le k$ and $e\cap c=\emptyset$ if $k\ge 2$ (see Figure \ref{fig5_2}).
By Proposition \ref{prop3_0} there are homomorphisms $\psi_1:A[D_{k+2}]\to\MM(\Sigma_n,x)$ and $\psi_2:A[A_{k+1}]\to\MM(\Sigma_n,x)$ defined by
\begin{gather*}
\psi_1(t_1)=T_e\,,\ \psi_1(t_i)=T_{d_{i-1}}\text{ for }2\le i\le k+1\,,\ \psi_1(t_{k+2})=T_{c}\,,\\
\psi_2(s_1)=T_e\,,\ \psi_2(s_i)=T_{d_{i-1}}\text{ for }2\le i\le k\,,\ \psi_2(s_{k+1})=T_{c}\,.
\end{gather*}
We denote by $\Delta_{D,k+1}$ the Garside element of $A[D_{k+2}]$ and by $\Delta_{A,k+1}$ the Garside element of $A[A_{k+1}]$, and we set $g=\psi_1(\Delta_{D,k+1})\,\psi_2(\Delta_{A,k+1}^{-2})$.
Then, as in the case where $k$ is even, we have $g([d_i])=[d_i]$ for all $1\le i \le k-1$, $g([c])=[d_k]$, and $g\in\Ker(\theta)$.
\end{proof}

\begin{figure}[ht!]
\begin{center}
\includegraphics[width=6.6cm]{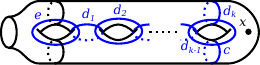}
\caption{Regular neighborhood of $\big(\bigcup_{i=1}^{k-1}d_i\big)\cup C$, case where $k$ is odd}\label{fig5_2}
\end{center}
\end{figure}

The following lemma is the extension of Lemma \ref{lem5_3} to the case $c \cap d_k \neq \emptyset$.

\begin{lem}\label{lem5_4}
Let $n\ge3$.
Set $m=n-1$ if $n$ is odd and $m=n-2$ if $n$ is even.
Let $1\le k\le m$.
Let $c$ be a generic circle of $\Sigma_n\setminus\{x\}$ such that $c\cap d_i=\emptyset$ for $1\le i\le k-2$, $|c\cap d_{k-1}|=1$ if $k\ge 2$, and $c$ is isotopic to $d_k$ in $\Sigma_n$.
Then there exists $g\in\Ker(\theta)$ such that $g([d_i])=[d_i]$ for all $1\le i\le k-1$ and $g([c])=[d_k]$.
\end{lem}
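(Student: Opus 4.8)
The plan is to reduce Lemma \ref{lem5_4} to Lemma \ref{lem5_3}. The only difference between the two statements is that Lemma \ref{lem5_3} requires $c \cap d_k = \emptyset$ whereas here $c$ may meet $d_k$; every other hypothesis, and the conclusion, are identical. So it suffices to produce an element $g_0 \in \Ker(\theta)$ such that $g_0([d_i]) = [d_i]$ for all $1 \le i \le k-1$ and such that $c' = g_0(c)$ is a generic circle satisfying $c' \cap d_i = \emptyset$ for $1 \le i \le k-2$, $|c' \cap d_{k-1}| = 1$, $c' \cap d_k = \emptyset$, and $c'$ isotopic to $d_k$ in $\Sigma_n$. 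Granting this, Lemma \ref{lem5_3} applied to $c'$ yields $g_1 \in \Ker(\theta)$ with $g_1([d_i]) = [d_i]$ for $1 \le i \le k-1$ and $g_1([c']) = [d_k]$, and then $g = g_1 g_0 \in \Ker(\theta)$ satisfies $g([d_i]) = [d_i]$ and $g([c]) = g_1([c']) = [d_k]$, as required. Note that a direct generalization of the construction in Lemma \ref{lem5_3} is not available, since when $c \cap d_k \neq \emptyset$ the Dehn twists $T_{d_k}$ and $T_c$ need no longer satisfy the defining relation (Proposition \ref{prop3_0}), so the homomorphism $\psi_1 : A[D_{k+1}] \to \MM(\Sigma_n, x)$ used there does not exist; this is exactly why one must first disjointify.

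To build $g_0$ I would use the description of $\Ker(\theta)$ as the point-pushing subgroup $\pi_1(\Sigma_n, x)$ (Birman \cite{Birma1}), recalled before diagram (\ref{eq3_1}). Since $c$ and $d_k$ are isotopic in $\Sigma_n$, we have $i(c, d_k) = 0$ in $\Sigma_n$; putting $c$ and $d_k$ in minimal position in $\Sigma_n \setminus \{x\}$, if $c \cap d_k \neq \emptyset$ then an innermost bigon between them is a disk in $\Sigma_n$ which, by minimality, must contain the puncture $x$, for otherwise it would be a removable bigon in $\Sigma_n \setminus \{x\}$. Pushing $x$ across this bigon is realized by a point-push along a suitable loop $\gamma$ based at $x$, it leaves $[d_k]$ unchanged, and it decreases $|c \cap d_k|$ by two. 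Iterating this finitely many times, $c$ is carried to a circle disjoint from $d_k$, and the composite of the corresponding point-pushes is the candidate for $g_0$.

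The main obstacle is to perform this disjointification while fixing the isotopy classes of $d_1, \dots, d_{k-1}$, that is, to route each pushing loop $\gamma$ in the complement of these curves. Since $d_1, \dots, d_{k-2}$ are already disjoint from both $c$ and $d_k$, an innermost-disk argument lets one take the bigons, hence the loops $\gamma$, disjoint from $d_1, \dots, d_{k-2}$, so the associated point-pushes fix $[d_1], \dots, [d_{k-2}]$. The delicate curve is $d_{k-1}$: it meets both $c$ and $d_k$ once and therefore has an arc crossing the region swept by the push, so one must check that the loops can still be chosen so that $d_{k-1}$ returns to its isotopy class and keeps a single intersection point with $c'$. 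The cleanest way to organize this, mirroring the proof of Lemma \ref{lem5_3}, is to pass to a regular neighborhood $\Sigma'$ of $\big( \bigcup_{i=1}^{k-1} d_i \big) \cup c \cup d_k$, to exhibit inside $\Sigma'$ the once-punctured annulus cobounded by $c$ and $d_k$, and to assemble $g_0$ from Dehn twists supported in $\Sigma'$ and equal to the identity near $d_1, \dots, d_{k-1}$, combining Garside-type elements so that the resulting class lies in $\Ker(\theta)$ by Lemma \ref{lemA5} together with the equality $\theta(T_c) = \theta(T_{d_k})$. Verifying that such a neighborhood and such twists can be chosen compatibly with the essential crossing $|c \cap d_{k-1}| = 1$ is the technical heart of the argument, after which Lemma \ref{lem5_3} completes the proof.
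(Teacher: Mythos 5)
Your overall reduction --- produce $g_0\in\Ker(\theta)$ fixing $[d_1],\dots,[d_{k-1}]$ and carrying $c$ to a circle disjoint from $d_k$, then invoke Lemma \ref{lem5_3} --- is the right target, and it is also where the paper is headed; but the step that is supposed to produce $g_0$ is a genuine gap. The claim that ``pushing $x$ across this bigon is realized by a point-push along a suitable loop $\gamma$'' which ``leaves $[d_k]$ unchanged and decreases $|c\cap d_k|$ by two'' is precisely what needs proof, and no local argument can give it: a homeomorphism of $(\Sigma_n,x)$ supported in a disk neighborhood of the bigon and equal to the identity near its boundary can never move the arc of $c$ to the other side of the puncture, because the mapping class group of a once-punctured disk rel boundary is trivial (the boundary twist is isotopic to the identity through maps fixing the puncture), while the two arcs are non-isotopic in the punctured disk. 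So any point-push doing what you want must be globally supported --- compare the element $g$ at the end of the proof of Lemma \ref{lem5_5} and the loop $\mu$ of Figure \ref{fig6_3}, which travel around a cylinder or a handle of $\Sigma_n$ --- and proving that such a global push exists \emph{and} can be chosen to fix $[d_1],\dots,[d_{k-1}]$ (let alone $[d_k]$ as well) is essentially the content of Lemma \ref{lem5_4} itself. Your fallback for this ``technical heart'' is moreover circular: you propose to mirror the construction of Lemma \ref{lem5_3} on a regular neighborhood, using ``the once-punctured annulus cobounded by $c$ and $d_k$'' and Garside elements, but when $c\cap d_k\neq\emptyset$ no such annulus exists (Epstein's lemma requires disjoint circles), and the homomorphism $\psi_1:A[D_{k+1}]\to\MM(\Sigma_n,x)$ requires exactly the disjointness pattern you are trying to create --- an obstruction you yourself point out in your first paragraph.

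The paper closes this gap with a trick your proposal is missing: it never constructs a bigon-removing push directly. It argues by induction on $i([c],[d_k])$ (computed in $\Sigma_n\setminus\{x\}$) and performs surgery on the \emph{curve} rather than on the surface: $c$ is replaced by the circle $c'$ running parallel to $c$ except inside the $x$-containing bigon, where it follows the arc of $d_k$. This $c'$ still satisfies the hypotheses of the lemma, is disjoint from $c$, and satisfies $i([c'],[d_k])<i([c],[d_k])$, so the induction hypothesis (not Lemma \ref{lem5_3} directly) yields $g_1\in\Ker(\theta)$ with $g_1([d_i])=[d_i]$ and $g_1([c'])=[d_k]$. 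Realizing $g_1$ by a homeomorphism $G_1$ with $G_1(d_i)=d_i$ and $G_1(c')=d_k$ (Farb--Margalit \cite{FarMar1}, Lemma 2.9), the circle $c''=G_1(c)$ is then \emph{automatically} disjoint from $d_k=G_1(c')$, simply because $c$ was disjoint from $c'$; now Lemma \ref{lem5_3} applies to $c''$, and composing the two elements finishes the proof. In other words, the disjointification is achieved by transporting $c$ with a map that sends a disjoint proxy $c'$ of $c$ onto $d_k$, not by pushing the puncture out of the bigon; this transport step is the idea your argument needs and cannot currently supply.
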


\begin{proof}
We argue by induction on $i([c],[d_k])$, which is computed on the surface $\Sigma_n \setminus \{ x\}$ and not on $\Sigma_n$.
The case $i([c],[d_k])=0$ is proved in Lemma \ref{lem5_3}, hence we can assume that $i([c],[d_k])\ge 1$ and that the induction hypothesis holds.
Note that now $c$ and $d_k$ cannot be isotopic in $\Sigma_n\setminus\{x\}$ since $i([c],[d_k])\neq 0$.
We can assume without loss of generality that $i([c],[d_k])=|c\cap d_k|$.
Since $c$ and $d_k$ are isotopic in $\Sigma_n$, there exists a bigon $D$ in $\Sigma_n$ cobounded by an arc of $d_k$ and an arc of $c$ as shown in Figure \ref{fig5_3}.
We can choose this bigon to be minimal in the sense that its interior intersects neither $c$ nor $d_k$.
The bigon $D$ cannot intersect $d_i$ for $1\le i\le k-2$ and one can easily modify $c$ so that $D$ does not intersect $d_{k-1}$ either.
Since $c$ and $d_k$ are not isotopic in $\Sigma_n\setminus\{x\}$, $D$ necessarily contains the puncture $x$ in its interior.
We choose a circle $c'$ parallel to $c$ except in the bigon $D$ where it follows the arc of $d_k$ which borders $D$ as illustrated in Figure \ref{fig5_3}.
By construction $c'\cap d_i=\emptyset$ for $1\le i\le k-2$, $|c'\cap d_{k-1}|=1$ if $k\ge 2$, and $c'$ is isotopic to $d_k$ in $\Sigma_n$.
Moreover $i([c'],[d_k])\le |c'\cap d_k|<|c\cap d_k|=i([c],[d_k])$.
By the induction hypothesis there exists $g_1\in\Ker(\theta)$ such that $g_1([d_i])=[d_i]$ for all $1\le i\le k-1$ and $g_1([c'])=[d_k]$.
By Farb--Margalit \cite[Lemma 2.9]{FarMar1}, we can choose $G_1\in\Homeo^+(\Sigma_n,x)$ which represents $g_1$ such that $G_1(d_i)=d_i$ for all $1\le i\le k-1$ and $G_1(c')=d_k$.
We set $c''=G_1(c)$.
Then $c''\cap d_i=\emptyset$ for $1\le i\le k-2$, $|c''\cap d_{k-1}|=1$ if $k\ge 2$, $c''\cap d_k=\emptyset$, and $c''$ is isotopic to $d_k$ in $\Sigma_n$.
By Lemma \ref{lem5_3} there exists $g_2\in\Ker(\theta)$ such that $g_2([d_i])=[d_i]$ for all $1\le i\le k-1$ and $g_2([c''])=[d_k]$.
We set $g=g_2\circ g_1$.
Then $g\in\Ker(\theta)$, $g([d_i])=[d_i]$ for all $1\le i\le k-1$ and $g([c])=[d_k]$.
\end{proof}

\begin{figure}[ht!]
\begin{center}
\includegraphics[width=3.6cm]{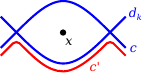}
\caption{Bigon cobounded by $c$ and $d_k$}\label{fig5_3}
\end{center}
\end{figure}

\begin{lem}\label{lem5_5}
Let $n$ be an even number, $n\ge 4$.
Let $c$ be a generic circle of $\Sigma_n\setminus\{x\}$ such that $c\cap d_i=\emptyset$ for all $1\le i\le n-3$, $|c\cap d_{n-2}|=1$, $c\cap d_{n-1}=\emptyset$, and $c$ is isotopic to $d_{n-1}$ in $\Sigma_n$.
Then we have one of the following two possibilities.
\begin{itemize}
\item[(1)]
$c$ is isotopic to $d_{n-1}$ in $\Sigma_n\setminus\{x\}$.
\item[(2)]
There exists $g\in\Ker(\theta)$ such that $g([d_i])=[d_i]$ for all $1\le i\le n-1$ and $g([c])=[d_n]$.
\end{itemize}
\end{lem}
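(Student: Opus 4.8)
The plan is to dispose of case (1) first and then, assuming $c$ is not isotopic to $d_{n-1}$ in $\Sigma_n\setminus\{x\}$, to recognise $c$ as a copy of $d_n$ and to produce the required element of $\Ker(\theta)$. Since $c$ and $d_{n-1}$ are disjoint and isotopic in $\Sigma_n$ but, by assumption, not in $\Sigma_n\setminus\{x\}$, they cobound an annulus $C$ in $\Sigma_n$ (Epstein \cite{Epste1}) whose interior contains the puncture $x$. The whole dichotomy of the statement reflects the position of $x$: on the once-punctured annulus $C\setminus\{x\}$ a boundary-parallel circle is isotopic either to $d_{n-1}$ or to the circle lying on the far side of $x$, and the latter is exactly the standard second prong $d_n$.

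First I would put the configuration in standard form, as in the proof of Lemma \ref{lem5_3}. I take a regular neighbourhood $\Sigma'$ of $\big( \bigcup_{i=1}^{n-2} d_i \big) \cup C$ and analyse it. The hypotheses $c\cap d_i=\emptyset$ for $i\le n-3$, $|c\cap d_{n-2}|=1$ and $c\cap d_{n-1}=\emptyset$ determine how the arc of $d_{n-2}$ threads the cylinder $C$; after unwinding by Dehn twists along $c$ and $d_{n-1}$ one reaches the single standard picture of Figure \ref{fig3_2}, in which $d_1,\dots,d_{n-2},d_{n-1}$ occupy their usual positions and $c$ is boundary-parallel in $C$. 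Here the minimality $|c\cap d_{n-2}|=1$ is essential: it forbids any intermediate winding of $c$ around $x$, so that in $\Sigma_n\setminus\{x\}$ the circle $c$ is isotopic either to $d_{n-1}$, which gives case (1), or to $d_n$.

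It then remains, in the second alternative, to realise the isotopy by an element $g\in\Ker(\theta)$ that fixes $[d_1],\dots,[d_{n-1}]$ and sends $[c]$ to $[d_n]$. Here the essential difference with Lemma \ref{lem5_3} appears. In the interior cases one interchanged the two prongs of a $D$-type fork of odd rank by conjugating with the corresponding Garside element; but because $n$ is even, the Garside element $\Delta[D_n]$ centralises every $t_i$ and so induces no swap of the two prongs, while the odd-rank trick of Lemma \ref{lem5_3} is unavailable for want of genus once one reaches the top fork. Instead I would exhibit $g$ directly inside $\Ker(\theta)=\pi_1(\Sigma_n,x)$ (see Birman \cite{Birma1} and Theorem \ref{thm3_5}), as an appropriate point-pushing map that drags the boundary-parallel circle $c$ across $x$ onto $d_n$ while, the pushing loop being isotopic off $d_1,\dots,d_{n-1}$, fixing each $[d_i]$ for $1\le i\le n-1$.

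The main obstacle is precisely this last construction for even $n$: since no sub-Artin group of $D$ type of the correct parity is at hand to swap $c$ and $d_n$ by a Garside element, the swap must be produced by hand inside $\Ker(\theta)$, and one must verify that the chosen point-pushing loop genuinely fixes the second prong $d_{n-1}$, and the whole chain $d_1,\dots,d_{n-1}$, and not merely $d_1,\dots,d_{n-2}$. A secondary but real difficulty is to justify rigorously that the minimality $|c\cap d_{n-2}|=1$ forces the clean two-term dichotomy, ruling out the a priori $\Z$-family of windings of $c$ around the puncture.
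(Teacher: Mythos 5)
Your proposal has a genuine gap, and it is precisely at the step you call a ``secondary but real difficulty'': the claimed two-term dichotomy is false. It is \emph{not} true that the hypotheses force $c$ to be isotopic in $\Sigma_n\setminus\{x\}$ to either $d_{n-1}$ or $d_n$, and the hypothesis $|c\cap d_{n-2}|=1$ cannot rule out the $\Z$-family of windings, because that winding takes place entirely in the complement of $\bigcup_{i=1}^{n-1}d_i$. Concretely: cut $\Sigma_n$ along $d_1,\dots,d_{n-1}$ and look at the component $\Omega_2$ containing $x$ and the boundary component $\partial_2$; the trace of $c$ is a single arc $\ell$, and $\ell$ may wind around $\partial_2$ and $x$ together (Figure \ref{fig5A2} of the paper). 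Such a winding creates no intersection with any $d_i$, and it unravels as soon as the puncture is forgotten, so $c$ is still isotopic to $d_{n-1}$ in $\Sigma_n$; yet in $\Sigma_n\setminus\{x\}$ the resulting classes $[c]$ form an infinite family $P_\mu^{k}([d_n])$, $k\in\Z$, pairwise distinct and distinct from $[d_{n-1}]$ and $[d_n]$, where $P_\mu\in\pi_1(\Omega_2,x)\subset\Ker(\theta)$ is the point-push around $\partial_2$. This is exactly the content of the Remark following Lemma \ref{lem5_5}: the element $g$ of case (2) must in general be non-trivial, which is why the lemma is stated with an existential $g\in\Ker(\theta)$ rather than with ``$c$ is isotopic to $d_n$ in $\Sigma_n\setminus\{x\}$''. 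Note also that your deployment of point-pushing is logically misplaced: if $c$ \emph{were} isotopic to $d_n$ in $\Sigma_n\setminus\{x\}$, then $g=\id$ would already satisfy case (2) and there would be nothing left to ``realise''; point-pushing is needed precisely to undo the winding you are trying to exclude.

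The paper's proof is built around this phenomenon rather than against it. It cuts along all of $d_1,\dots,d_{n-1}$, so that the trace of $c$ is an arc $\ell$ in one of the two complementary cylinders $\Omega_1,\Omega_2$ (Figure \ref{fig5_5}); in the cylinder \emph{forgetting the puncture} there are exactly two isotopy classes of arcs joining the two copies of the point $c\cap d_{n-2}$, and the hypothesis that $c$ is isotopic to $d_{n-1}$ in $\Sigma_n$ selects one of them, say $\ell_4$. One then takes an ambient isotopy $F_t$ of $\Omega_2$ (not of $\Omega_2\setminus\{x\}$) with $F_1(\ell)=\ell_4$ and \emph{tracks where $F_1$ sends $x$}: if $F_1(x)$ lands on one side of $\ell_4$ one exhibits an embedded annulus avoiding $x$ and gets case (1); if it lands on the other side one composes with a second isotopy $F'_t$ carrying $\ell_4$ to the trace $\ell_5$ of $d_n$ and carrying $F_1(x)$ back to $x$. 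The homeomorphism $F_1'\circ F_1$, extended by the identity outside $\Omega_2$, is the required $g$: it lies in $\Ker(\theta)$ because it is isotopic to the identity once $x$ is forgotten, it sends $[c]$ to $[d_n]$, and it fixes every $[d_i]$, $1\le i\le n-1$, automatically, since it is supported in $\Omega_2$ --- so the verification you worry about in your last paragraph never arises. In short, your cut-and-standardize ingredients and the appeal to $\Ker(\theta)=\pi_1(\Sigma_n,x)$ are of the right kind, but the argument must construct $g$ \emph{as a function of the winding of $\ell$}, not prove the winding away; as proposed, your proof establishes the lemma only in the special case where $\ell$ is already unwound.
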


\begin{proof}
The surface $\Sigma_n$ is a surface of genus $\frac{n-2}{2}$ with two boundary components $\partial_1$ and $\partial_2$.
We assume that the circles $d_1,\dots,d_{n-1},d_n$ are arranged as in Figure \ref{fig5_4}.
Let $\Omega$ be the surface obtained by cutting $\Sigma_n$ along $\bigcup_{i=1}^{n-1}d_i$.
Then $\Omega$ has two connected components $\Omega_1$ and $\Omega_2$.
Each of these components is a cylinder that we represent by a square with a hole in the middle as shown in Figure \ref{fig5_5}.
Two opposite sides of each square represent arcs of $d_{n-2}$, one side represents an arc of $d_{n-1}$, and the last side represents a union of arcs of $d_1,\dots ,d_{n-3}$.
The boundary of the hole represents $\partial_1$ for $\Omega_1$ and $\partial_2$ for $\Omega_2$.
The puncture $x$ sits inside $\Omega_2$.
The trace of the circle $c$ in $\Omega$ is a simple arc $\ell$, either in $\Omega_1$ or in $\Omega_2$.

\begin{figure}[ht!]
\begin{center}
\includegraphics[width=5.2cm]{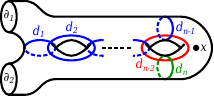}
\caption{Circles $d_1,\dots, d_n$}\label{fig5_4}
\end{center}
\end{figure}

\begin{figure}[ht!]
\begin{center}
\begin{tabular}{cc}
\parbox[c]{3.8cm}{\includegraphics[width=3.6cm]{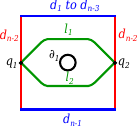}}
&
\parbox[c]{3.8cm}{\includegraphics[width=3.6cm]{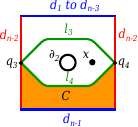}}\\
\\
$\Omega_1$ & $\Omega_2$
\end{tabular}
\caption{The surface $\Omega$}\label{fig5_5}
\end{center}
\end{figure}

Suppose $\ell$ is in $\Omega_1$.
Let $q$ be the intersection point of $c$ with $d_{n-2}$.
Then $q$ is represented in $\Omega_1$ by two points $q_1$ and $q_2$ on two opposite sides of $\Omega_1$ as shown in Figure \ref{fig5_5}, and $\ell$ is a simple arc connecting $q_1$ with $q_2$.
Up to isotopy pointwise fixing the boundary of $\Omega_1$, there exist exactly two simple arcs in $\Omega_1$ connecting $q_1$ to $q_2$ that are represented by the arcs $\ell_1$ and $\ell_2$ depicted in Figure \ref{fig5_5}.
The arc $\ell$ cannot be isotopic to $\ell_1$, otherwise $c$ would not be isotopic to $d_{n-1}$ in $\Sigma_n$.
So, $\ell$ is isotopic to $\ell_2$ in $\Omega_1$ which implies that $c$ is isotopic to $d_{n-1}$ in $\Sigma_n\setminus\{x\}$.

Now suppose $\ell$ is in $\Omega_2$.
Let $q$ be the intersection point of $c$ with $d_{n-2}$.
Then $q$ is represented in $\Omega_2$ by two points $q_3$ and $q_4$ on two opposite sides of $\Omega_2$ as shown in Figure \ref{fig5_5}, and $\ell$ is a simple arc connecting $q_3$ with $q_4$.
Up to isotopy (in $\Omega_2$ and not in $\Omega_2\setminus\{x\}$) pointwise fixing the boundary of $\Omega_2$, there exist exactly two simple arcs in $\Omega_2$ connecting $q_3$ to $q_4$ that are represented by the arcs $\ell_3$ and $\ell_4$ depicted in Figure \ref{fig5_5}.
The arc $\ell$ cannot be isotopic to $\ell_3$ in $\Omega_2$, otherwise $c$ would not be isotopic to $d_{n-1}$ in $\Sigma_n$.
So, $\ell$ is isotopic to $\ell_4$ in $\Omega_2$.
Let $\{F_t:\Omega_2\to \Omega_2\}_{t\in[0,1]}$ be an isotopy such that $F_0=\id$, $F_1(\ell)=\ell_4$ and $F_t $ is the identity on the boundary of $\Omega_2$ for all $t\in[0,1]$.
The arc $\ell_4$ divides $\Omega_2$ into two parts, the lower one which does not contain the hole bordered by $\partial_2$ and the puncture $x$, and the upper one which contains the hole bordered by $\partial_2$ and the puncture $x$, as shown in Figure \ref{fig5_5}.

Suppose $F_1(x)$ is in the upper part.
Let $C$ be the domain of $\Omega_2$ bounded by $\ell_4$, two arcs of $d_{n-2}$ and an arc of $d_{n-1}$ as shown in Figure \ref{fig5_5}.
Let $C'=F_1^{-1}(C)$.
Then $C'$ is a domain of $\Omega_2$ bounded by $\ell$, two arcs of $d_{n-2}$ and an arc of $d_{n-1}$ and $C'$  does not contain the puncture $x$.
The existence of such a domain implies that $c$ is isotopic to $d_{n-1}$ in $\Sigma_n\setminus\{x\}$.

Now, suppose $F_1(x)$ is in the lower part.
We can assume without loss of generality that the trace of $d_n$ on $\Omega_2$ is the simple arc $\ell_5$ drawn in Figure \ref{fig5_6}.
We can choose an isotopy $\{F_t':\Omega_2\to \Omega_2\}_{t\in[0,1]}$ such that $F_0'=\id$, $F_1'(\ell_4)= \ell_5$, $F_t'$ is the identity on the boundary of $\Omega_2$ for all $t\in [0,1]$, and $F_1'(F_1(x))=x$.
Let $\tilde F:\Sigma_n\to\Sigma_n$ be the homeomorphism which is $F_1'\circ F_1$ on $\Omega_2$ and is the identity outside $\Omega_2$, and let $g\in\MM(\Sigma_n,x)$ be the mapping class represented by $\tilde F$.
Then $g\in\Ker(\theta)$, $g([d_i])=[d_i]$ for all $1\le i\le n-1$, and $g([c])=[d_n]$.
\end{proof}

\begin{figure}[ht!]
\begin{center}
\includegraphics[width=3.6cm]{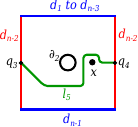}
\caption{The arc $\ell_5$}\label{fig5_6}
\end{center}
\end{figure}

\begin{rem}
The element $g$ at the end of the proof of Lemma \ref{lem5_5} is not necessarily trivial. 
For example, $\ell$ can be as shown in Figure \ref{fig5A2} up to isotopy and, in this case, $g$ must be non-trivial. 
In fact, $g$ can be any element of the fundamental group $\pi_1(\Omega_2,x)$, which is an infinite cyclic group, seen as a subgroup of $\MM (\Sigma_n,x)$.
\end{rem}

\begin{figure}[ht!]
\begin{center}
\includegraphics[width=3.6cm]{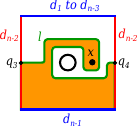}
\caption{An arc $\ell$ non-isotopic to $\ell_5$}\label{fig5A2}
\end{center}
\end{figure}

The following lemma is the extension of Lemma \ref{lem5_5} to the case $c \cap d_k \neq \emptyset$.

\begin{lem}\label{lem5_6}
Let $n$ be an even number, $n\ge4$.
Let $c$ be a generic circle of $\Sigma_n\setminus\{x\}$ such that $c\cap d_i=\emptyset$ for all $1\le i \le n-3$, $|c\cap d_{n-2}|=1$, and $c$ is isotopic to $d_{n-1}$ in $\Sigma_n$.
Then there exists $g\in\Ker(\theta)$ such that $g([d_i])=[d_i]$ for all $1\le i\le n-2$, and either $g([c]) =[d_{n-1}]$ or $g([c])=[d_n]$.
\end{lem}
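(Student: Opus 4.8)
The plan is to argue by induction on the intersection number $i([c],[d_{n-1}])$, computed on $\Sigma_n\setminus\{x\}$ and not on $\Sigma_n$, exactly as in the proof of Lemma~\ref{lem5_4}. I may assume throughout that $c$ has been isotoped into minimal position with respect to all of $d_1,\dots,d_{n-1}$ simultaneously, so that the hypotheses $c\cap d_i=\emptyset$ for $1\le i\le n-3$ and $|c\cap d_{n-2}|=1$ persist. For the base case $i([c],[d_{n-1}])=0$, minimal position gives $c\cap d_{n-1}=\emptyset$, so $c$ meets all the hypotheses of Lemma~\ref{lem5_5}. That lemma then yields either that $c$ is isotopic to $d_{n-1}$ in $\Sigma_n\setminus\{x\}$, in which case $g=\id$ works with $g([c])=[d_{n-1}]$, or an element $g\in\Ker(\theta)$ fixing $[d_1],\dots,[d_{n-1}]$ with $g([c])=[d_n]$; in either case the conclusion holds.

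For the inductive step assume $i([c],[d_{n-1}])\ge1$. Since $c$ and $d_{n-1}$ are isotopic in $\Sigma_n$ but, by positivity of the intersection number, not in $\Sigma_n\setminus\{x\}$, they cobound an innermost bigon $D$ in $\Sigma_n$. As in Lemma~\ref{lem5_4}, $D$ is disjoint from $d_1,\dots,d_{n-3}$, and after a small modification of $c$ one may assume $D$ is disjoint from $d_{n-2}$ as well; necessarily $x$ lies in the interior of $D$. Pushing a parallel copy $c'$ of $c$ across $D$ so that it follows the arc of $d_{n-1}$ bounding $D$, I obtain a generic circle $c'$ with $c'\cap c=\emptyset$, with $c'\cap d_i=\emptyset$ for $1\le i\le n-3$, with $|c'\cap d_{n-2}|=1$, isotopic to $d_{n-1}$ in $\Sigma_n$, and with $i([c'],[d_{n-1}])<i([c],[d_{n-1}])$.

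By the induction hypothesis there is $g_1\in\Ker(\theta)$ with $g_1([d_i])=[d_i]$ for $1\le i\le n-2$ and $g_1([c'])\in\{[d_{n-1}],[d_n]\}$. Using the change of coordinates principle (Farb--Margalit \cite[Lemma~2.9]{FarMar1}) I realise $g_1$ by $G_1\in\Homeo^+(\Sigma_n,x)$ with $G_1(d_i)=d_i$ for $1\le i\le n-2$ and $G_1(c')$ equal to $d_{n-1}$ or $d_n$ accordingly. Set $c''=G_1(c)$. Since $c\cap c'=\emptyset$ we have $c''\cap G_1(c')=\emptyset$, while $c''\cap d_i=G_1(c\cap d_i)$ shows $c''\cap d_i=\emptyset$ for $1\le i\le n-3$ and $|c''\cap d_{n-2}|=1$; moreover $c''$ is isotopic to $d_{n-1}$ in $\Sigma_n$, because $\theta(g_1)=1$ forces $G_1$ to be isotopic to the identity in $\Sigma_n$. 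When $G_1(c')=d_{n-1}$ the circle $c''$ is disjoint from $d_{n-1}$ and satisfies all hypotheses of Lemma~\ref{lem5_5}, which produces $g_2\in\Ker(\theta)$ fixing $[d_1],\dots,[d_{n-1}]$ with $g_2([c''])\in\{[d_{n-1}],[d_n]\}$; taking $g=g_2\circ g_1$ finishes this case, since $g([c])=g_2(g_1([c]))=g_2([c''])$.

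The main obstacle is the remaining case $G_1(c')=d_n$, where $c''$ is disjoint from $d_n$ rather than from $d_{n-1}$, so Lemma~\ref{lem5_5} does not apply verbatim. Here I would invoke the symmetry of the configuration of Figure~\ref{fig5_4} that exchanges $d_{n-1}$ and $d_n$ while fixing $d_1,\dots,d_{n-2}$ and the puncture $x$ --- the geometric incarnation of the automorphism $\zeta$ --- under which Lemma~\ref{lem5_5} yields the analogous statement with the roles of $d_{n-1}$ and $d_n$ interchanged. Applying this $d_n$-version to $c''$ gives $g_2\in\Ker(\theta)$ fixing $[d_1],\dots,[d_{n-2}]$ and $[d_n]$ with $g_2([c''])\in\{[d_{n-1}],[d_n]\}$, and again $g=g_2\circ g_1$ works. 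The delicate points to check carefully are that the bigon modification preserves the single transverse intersection with $d_{n-2}$ (using $|d_{n-1}\cap d_{n-2}|=1$ together with $D\cap d_{n-2}=\emptyset$), and that the $d_{n-1}\leftrightarrow d_n$ symmetry is realised by a homeomorphism lying over the identity of $\Sigma_n$, so that the symmetric form of Lemma~\ref{lem5_5} genuinely holds and its output lies in $\Ker(\theta)$.
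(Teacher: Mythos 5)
Your overall strategy is the paper's: remove, by induction, puncture-containing bigons between $c$ and $d_{n-1}$ so as to reduce to Lemma~\ref{lem5_5}; and your bookkeeping (induction on $i([c],[d_{n-1}])$ alone, with the case $g_1([c'])=[d_n]$ handled by a ``$d_n$-version'' of Lemma~\ref{lem5_5}) is a legitimate variant of the paper's induction on $i([c],[d_{n-1}])+i([c],[d_n])$, a symmetric quantity the paper uses precisely so that its ``exchange the roles of $d_{n-1}$ and $d_n$'' reductions preserve the induction variable. The genuine gap is in the verification you propose for the point you yourself flag as delicate: for $n$ even, the $d_{n-1}\leftrightarrow d_n$ symmetry is \emph{not} realised by any homeomorphism lying over the identity of $\Sigma_n$; in fact no element of $\MM(\Sigma_n,x)$ at all fixes the classes $[d_1],\dots,[d_{n-2}]$ and sends $[d_{n-1}]$ to $[d_n]$. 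To see this, let $N$ be a regular neighbourhood of $d_1\cup\dots\cup d_{n-1}$. Since this chain has odd length $n-1$, $N$ has two boundary circles, one of which is isotopic to $\partial_1$ in $\Sigma_n\setminus\{x\}$ (in Figure~\ref{fig5_5} it is the outer boundary of $\Omega_1$, the complementary cylinder not containing the puncture); that circle lies in the radical of the intersection form of $H_1(N)$, so after suitable choices of orientations $[\partial_1]=[d_1]+[d_3]+\cdots+[d_{n-1}]$ in $H_1(\Sigma_n\setminus\{x\})$, with $[d_{n-1}]$ occurring with coefficient $1$. On the other hand $[d_{n-1}]-[d_n]=\pm[\delta_x]$, where $\delta_x$ is a small circle around $x$, and $[d_1],\dots,[d_{n-1}],[d_n]$ are linearly independent in $H_1(\Sigma_n\setminus\{x\})$ (the first $n-1$ span the image of $H_1(N)$, which injects, and $[\delta_x]$ does not lie in that image). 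Now any $f\in\MM(\Sigma_n,x)$ is the identity near $\partial\Sigma_n$, hence satisfies $f_*[\partial_1]=[\partial_1]$; if $f$ also fixed each $[d_i]$, $i\le n-2$, and sent $[d_{n-1}]$ to $[d_n]$, then applying $f_*$ to the relation above would express $[\partial_1]$ as a $\pm1$-combination of $[d_1],[d_3],\dots,[d_{n-3}],[d_n]$ with no $[d_{n-1}]$ term, contradicting independence. This obstruction is the geometric face of the fact that $\zeta$ is not inner for $n$ even, and it is exactly why the odd-$n$ analogue, Lemma~\ref{lem6_1}, has a third alternative that Lemma~\ref{lem5_5} lacks; so the check you planned cannot succeed.

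The statement you need is nevertheless true, so the gap is reparable. The honest form of the symmetry is an orientation-preserving homeomorphism of $\Sigma_n$ fixing $x$, preserving each $d_i$ ($i\le n-2$) and exchanging $d_{n-1}$ with $d_n$, which by the computation above must also exchange $\partial_1$ with $\partial_2$ --- hence it is not an element of $\MM(\Sigma_n,x)$ and does not lie over anything in $\MM(\Sigma_n)$. Conjugation by such a homeomorphism still carries $\MM(\Sigma_n,x)$ into itself and $\Ker(\theta)$ into itself (conjugate an isotopy to the identity that is the identity near the boundary collars, which the homeomorphism merely permutes), and this yields your $d_n$-version of Lemma~\ref{lem5_5} with output in $\Ker(\theta)$. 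Alternatively, and more simply, rerun the proof of Lemma~\ref{lem5_5} verbatim with the chain $d_1,\dots,d_{n-2},d_n$ in place of $d_1,\dots,d_{n-2},d_{n-1}$: the complement is again a disjoint union of two cylinders, one containing the puncture, and the argument never uses the labels of the boundary components. This is what the paper implicitly does, both in its base case ``$|c\cap d_n|=0$'' and in its without-loss-of-generality steps. With this one repair your proof is correct and follows essentially the same route as the paper's.
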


\begin{proof}
In this proof the intersection number of two circles is computed on the surface $\Sigma_n \setminus \{ x\}$ and not on $\Sigma_n$.
We can assume that $|c\cap d_{n-1}|=i([c],[d_{n-1}])$ and $|c\cap d_n|=i([c],[d_n])$.
We argue by induction on $|c\cap d_{n-1}|+|c\cap d_n|=i([c],[d_{n-1}])+i([c],[d_n])$.
The case $|c\cap d_{n-1}|=0$ follows directly from Lemma \ref{lem5_5}, and the case $|c\cap d_n|=0$ is proved in the same way by replacing $d_{n-1}$ with $d_n$.
So we can assume that $i([c],[d_{n-1}])=|c\cap d_{n-1}|\ge 1$, $i([c],[d_n])=|c\cap d_n|\ge1$, and that the induction hypothesis holds.
Note that now $c$ and $d_{n-1}$ cannot be isotopic in $\Sigma_n\setminus\{x\}$.
Since $c$ and $d_{n-1}$ are isotopic in $\Sigma_n$, there exists a bigon $D$ in $\Sigma_n$ cobounded by an arc of $d_{n-1}$ and an arc of $c$ (see Figure \ref{fig5_7}).
Since $c$ and $d_{n-1}$ are not isotopic in $\Sigma_n\setminus\{x\}$, this bigon necessarily contains the puncture $x$.
We can choose $D$ to be minimal in the sense that its interior does not intersect $c$ and $d_{n-1}$.
Moreover, up to exchanging the roles of $d_{n-1}$ and $d_n$ if necessary, we can also assume that $d_n$ does not intersect the interior of $D$.
Clearly, $D$ does not intersect $d_i$ for any $1\le i\le n-3$ and, up to replacing $c$ with an isotopic circle, we can assume that $D$ does not intersect $d_{n-2}$ either.
Let $c'$ be a circle parallel to $c$ except in the bigon $D$ where it follows the arc of $d_{n-1}$ which borders $D$ as illustrated in Figure \ref{fig5_7}.
We have $c'\cap d_i=\emptyset$ for all $1\le i\le n-3$, $|c'\cap d_{n-2}|=1$ and $c'$ is isotopic to $d_{n-1}$ in $\Sigma_n$.
We also have $i([c'],[d_{n-1}])<i([c],[d_{n-1}])$ and $i([c'],[d_n])\le i([c],[d_n])$, hence by the induction hypothesis there exists $g_1\in\Ker(\theta)$ such that $g_1([d_i])=[d_i]$ for all $1\le i\le n-2$, and either $g_1([c'])=[d_{n-1}]$ or $g_1([c'])=[d_n]$.
Without loss of generality we can assume that $g_1([c'])=[d_{n-1}]$.
We choose $G_1\in\Homeo^+(\Sigma_n,x)$ which represents $g_1$ such that $G_1(d_i)=d_i$ for all $1\le i\le n-2$ and $G_1(c')=d_{n-1}$.
We set $c''=G_1(c)$.
Then $c''\cap d_i=\emptyset$ for all $1\le i\le n-3$, $|c''\cap d_{n-2}|=1$, $c''\cap d_{n-1}=\emptyset$, and $c''$ is isotopic to $d_{n-1}$ in $\Sigma_n$.
By Lemma \ref{lem5_5} there exists $g_2\in\Ker(\theta)$ such that $g_2([d_i])=[d_i]$ for all $1\le i\le n-2$, and either $g_2([c''])=[d_{n-1}]$ or $g_2([c''])=[d_n]$.
We set $g=g_2\circ g_1$.
Then $g\in\Ker(\theta)$, $g([d_i])=[d_i]$ for all $1\le i\le n-2$, and either $g([c])=[d_{n-1}]$ or $g([c])=[d_n]$.
\end{proof}

\begin{figure}[ht!]
\begin{center}
\includegraphics[width=3.6cm]{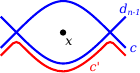}
\caption{Bigon cobounded by $c$ and $d_{n-1}$}\label{fig5_7}
\end{center}
\end{figure}

\begin{lem}\label{lem5_7}
 Let $n\ge6$.
Let $c_1,\dots,c_{n-1}$ be generic circles in $\Sigma_n\setminus\{x\}$ such that
\begin{itemize}
\item[(a)]
$|c_i\cap c_j|=1$ if $|i-j|=1$ and $|c_i\cap c_j|=0$ if $|i-j|\ge2$, for all $1\le i,j\le n-1$,
\item[(b)]
$c_i$ is isotopic to $d_i$ in $\Sigma_n$ for all $1\le i\le n-1$.
\end{itemize}
Then:
\begin{itemize}
\item[(1)]
If $n$ is odd, then there exists $g\in\Ker(\theta)$ such that $g([c_i])=[d_i]$ for all $1\le i\le n-1$.
\item[(2)]
If $n$ is even, then there exists $g\in\Ker(\theta)$ such that $g([c_i])=[d_i]$ for all $1\le i\le n-2$, and either $g([c_{n-1}])=[d_{n-1}]$ or $g([c_{n-1}])=[d_n]$.
\end{itemize}
\end{lem}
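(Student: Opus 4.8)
The plan is to straighten the circles $c_1, \dots, c_{n-1}$ to $d_1, \dots, d_{n-1}$ one at a time by an induction on the index, invoking Lemma \ref{lem5_4} for the generic steps and Lemma \ref{lem5_6} to handle the last circle when $n$ is even. Set $m = n-1$ if $n$ is odd and $m = n-2$ if $n$ is even. I would prove by induction on $k \in \{0, 1, \dots, m\}$ the statement: there exists $g \in \Ker(\theta)$ such that $g([c_i]) = [d_i]$ for all $1 \le i \le k$. The base case $k = 0$ is trivial with $g = \id$. Two facts keep the induction alive. First, any element of $\Ker(\theta)$ is represented by a homeomorphism of $(\Sigma_n, x)$, hence preserves geometric intersection numbers, so the family $g(c_1), \dots, g(c_{n-1})$ still satisfies condition (a). Second, since $\theta(g) = 1$, the mapping class $g$ becomes trivial once $x$ is capped off, so $g(c_i)$ is isotopic to $c_i$, hence to $d_i$, in $\Sigma_n$; thus condition (b) persists as well. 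Consequently, at each stage I may replace the whole family by its image under the map produced so far and simply assume $[c_i] = [d_i]$ in $\CC(\Sigma_n \setminus \{x\})$ for $i < k$.

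For the inductive step, fix $1 \le k \le m$ and examine $c_k$. Combining (a) with $[c_i] = [d_i]$ for $i < k$ gives $i([c_k],[d_i]) = i([c_k],[c_i]) = 0$ for $1 \le i \le k-2$ and $i([c_k],[d_{k-1}]) = 1$ when $k \ge 2$, while (b) gives that $c_k$ is isotopic to $d_k$ in $\Sigma_n$. These are exactly the hypotheses of Lemma \ref{lem5_4}, which produces $h \in \Ker(\theta)$ fixing $[d_1], \dots, [d_{k-1}]$ and sending $[c_k]$ to $[d_k]$. Composing $h$ with the map from the previous stage advances the induction, since $h$ leaves the already-straightened classes untouched.

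When $n$ is odd the induction reaches $k = m = n-1$ and yields a single $g \in \Ker(\theta)$ with $g([c_i]) = [d_i]$ for all $1 \le i \le n-1$, which is part (1). When $n$ is even the induction stops at $k = m = n-2$, so it remains to treat $c_{n-1}$. After straightening, condition (a) gives $i([c_{n-1}],[d_i]) = 0$ for $1 \le i \le n-3$ and $i([c_{n-1}],[d_{n-2}]) = 1$, and condition (b) gives that $c_{n-1}$ is isotopic to $d_{n-1}$ in $\Sigma_n$; these are precisely the hypotheses of Lemma \ref{lem5_6}. That lemma supplies $h \in \Ker(\theta)$ fixing $[d_1], \dots, [d_{n-2}]$ with $h([c_{n-1}])$ equal to either $[d_{n-1}]$ or $[d_n]$, and one final composition gives the $g$ of part (2).

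Since the genuine topological work has been carried out in Lemmas \ref{lem5_3} through \ref{lem5_6}, this statement is essentially an inductive assembly and there is no single hard computation. The point requiring the most care is checking that conditions (a) and (b) truly survive each successive $\Ker(\theta)$-modification, which is exactly what the two preservation facts above guarantee. The remaining subtlety is structural rather than technical: the parity of $n$ forces the switch from Lemma \ref{lem5_4} (valid only for $k \le m$) to Lemma \ref{lem5_6} for the terminal circle, and it is this switch that produces the dichotomy $g([c_{n-1}]) = [d_{n-1}]$ versus $g([c_{n-1}]) = [d_n]$ appearing in part (2).
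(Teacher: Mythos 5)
Your proposal follows the same route as the paper's proof: the same induction that straightens one circle at a time via Lemma \ref{lem5_4}, with Lemma \ref{lem5_6} brought in for the terminal circle when $n$ is even, and your two ``preservation facts'' (conditions (a) and (b) survive any modification by an element of $\Ker(\theta)$) are precisely the remarks the paper makes along the way. The overall structure and the parity dichotomy are handled correctly.

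There is, however, one step that does not go through as written. After your replacement step you know only that the new $c_i$ is \emph{isotopic} to $d_i$ for $i<k$, and from this you deduce the class-level identities $i([c_k],[d_i])=0$ for $i\le k-2$ and $i([c_k],[d_{k-1}])=1$. But these are \emph{not} ``exactly the hypotheses of Lemma \ref{lem5_4}'': that lemma (and likewise Lemma \ref{lem5_6}) requires an honest representative $c$ with $c\cap d_i=\emptyset$ and $|c\cap d_{k-1}|=1$, where the $d_i$ are the specific circles of Figure \ref{fig3_2}. Vanishing intersection \emph{numbers} do not by themselves produce a representative of $[c_k]$ in minimal position with all the $d_i$ simultaneously; realizing minimal position against a whole collection at once needs an argument (naive bigon removal against one $d_i$ can create new intersections with another). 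The paper closes exactly this gap by choosing, at each stage, a homeomorphism $G_{k-1}$ representing $g_{k-1}$ with $G_{k-1}(c_i)=d_i$ \emph{on the nose} for $i\le k-1$ --- possible by Farb--Margalit \cite[Lemma 2.9]{FarMar1}, since both collections are pairwise in minimal position --- and then applying Lemma \ref{lem5_4} to $c_k'=G_{k-1}(c_k)$, whose literal intersections with the $d_i$ equal those of $c_k$ with the $c_i$. Your argument becomes correct once you insert this device (or an equivalent simultaneous realization, e.g.\ geodesic representatives followed by an ambient isotopy carrying them onto the $d_i$). A smaller point of the same kind: $i([c_k],[c_{k-1}])=1$ deserves a one-line justification (a single transverse intersection point cannot be removed, by the bigon criterion or by mod-$2$ parity); it is true, but it is not automatic from $|c_k\cap c_{k-1}|=1$.
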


\begin{proof}
For $1\le k\le n-2$ we construct by induction on $k$ an element $g_k\in\Ker(\theta)$ such that $g_k([c_i])=[d_i]$ for all $1\le i\le k$.
Assume $k=1$.
Then, by Lemma \ref{lem5_4} applied to $k=1$, there exists $g_1\in\Ker(\theta)$ such that $g_1([c_1])=[d_1]$.
Suppose $2\le k\le n-1$ and $g_{k-1}$ is constructed.
We choose $G_{k-1}\in\Homeo^+(\Sigma_n,x)$ which represents $g_{k-1}$ and such that $G_{k-1}(c_i)=d_i$ for all $1\le i\le k-1$, and we set $c_k'=G_{k-1}(c_k)$.
Note that, since $g_{k-1}\in\Ker(\theta)$, the circle $c_k'$ is isotopic to $c_k$ in $\Sigma_n$.
Then, by Lemma \ref{lem5_4}, there exists $h_k\in\Ker(\theta)$ such that $h_k([d_i])=[d_i]$ for all $1\le i\le k-1$ and $h_k([c_k'])=[d_k]$.
We set $g_k=h_k\circ g_{k-1}$.
Then $g_k([c_i])=[d_i]$ for all $1\le i\le k$.
Note that when $n$ is odd we can extend the induction to $k=n-1$ and conclude the proof here by setting $g=g_{n-1}$.
The case where $n$ is even requires an extra argument.

Assume $n$ is even.
We choose $G_{n-2}\in\Homeo^+(\Sigma_n,x)$ which represents $g_{n-2}$ and such that $G_{n-2}(c_i)=d_i$ for all $1\le i\le n-2$, and we set $c_{n-1}'=G_{n-2}(c_{n-1})$.
Again, since $g_{n-2}\in\Ker(\theta)$, the circle $c_{n-1}'$ is isotopic to $c_{n-1}$ in $\Sigma_n$.
By Lemma \ref{lem5_6} there exists $h_{n-1}\in\Ker(\theta)$ such that $h_{n-1}([d_i])=[d_i]$ for all $1\le i\le n-2$, and either $h_{n-1}([c_{n-1}'])=[d_{n-1}]$ or $h_{n-1}([c_{n-1}'])=[d_n]$.
We set $g=h_{n-1}\circ g_{n-2}$.
Then $g([c_i])=[d_i]$ for all $1\le i\le n-2$, and either $g([c_{n-1}])=[d_{n-1}]$ or $g([c_{n-1}])=[d_n]$.
\end{proof}

\begin{proof}[Proof of Theorem \ref{thm2_2}]
Let $n \ge 6$ and let $\varphi:A[A_{n-1}]\to A[D_n]$ be a homomorphism.
Composing $\varphi$ with $\pi$, we get a homomorphism $\pi \circ \varphi : A [A_{n-1}] \to A [D_n] \to A [A_{n-1}]$.
We know by Theorem \ref{thm3_4} that we have one of the following possibilities.
\begin{itemize}
\item
$\pi\circ\varphi$ is cyclic.
\item
There exist $\psi\in\langle\bar\chi\rangle$ and $p\in\Z$ such that $\pi\circ\varphi$ is conjugate to $\psi\circ\bar\gamma_p$.
\end{itemize}
By Lemma \ref{lem5_1}, if $\pi\circ\varphi$ is cyclic, then $\varphi$ is cyclic.
So, we can assume that there exists $\psi\in\langle\bar\chi\rangle$ and $p\in\Z$ such that $\pi\circ\varphi$ is conjugate to $\psi\circ\bar\gamma_p$.
Up to conjugating and composing $\varphi$ on the left by $\chi$ if necessary, we can assume that $\pi\circ\varphi=\bar\gamma_p$, that is, $(\pi\circ\varphi)(s_i)=s_i\Delta_A^{2p}$, where $\Delta_A$ denotes the Garside element of $A[A_{n-1}]$.

Set $U=\rho_A(\Delta_A^2)$.
If $n$ is odd, then $U^2=T_{\partial}$, where $\partial$ is the boundary component of $\Sigma_n$, and if $n$ is even, then $U=T_{\partial_1}T_{\partial_2}$, where $\partial_1$ and $\partial_2$ are the two boundary components of $\Sigma_n$ (see Labruère--Paris \cite[Proposition 2.12]{LabPar1}).
In particular $U^2\in Z(\MM(\Sigma_n))$ in both cases.

By Theorem \ref{thm3_6} we know that there exist generic circles $c_1,\dots,c_{n-1}$ in $\Sigma_n\setminus\{x\}$, $\varepsilon\in\{\pm 1\}$ and $f_0\in\MM(\Sigma_n,x)$ such that
\begin{itemize}
\item[(a)]
$|c_i\cap c_j|=1$ if $|i-j|=1$ and $|c_i\cap c_j|=0$ if $|i-j|\ge2$, for all $1\le i,j\le n-1$,
\item[(b)]
$f_0$ commutes with $T_{c_i}$ for all $1\le i\le n-1$,
\item[(c)]
$(\rho_D\circ\varphi)(s_i)=T_{c_i}^{\varepsilon}f_0$ for all $1\le i\le n-1$.
\end{itemize}
For $1\le i\le n-1$ we denote by $b_i$ the circle in $\Sigma_n$ obtained by composing $c_i:\S^1\to \Sigma_n\setminus\{x\}$ with the embedding $\Sigma_n\setminus\{x\}\hookrightarrow\Sigma_n$.
In addition we set $g_0=\theta(f_0)$.
Then $(\theta\circ\rho_D\circ\varphi)(s_i)=T_{b_i}^\varepsilon g_0$ for all $1\le i\le n-1$.
Note that, since $\theta \circ \rho_D = \rho_A \circ \pi$ (see Diagram \ref{eq3_1}), we also have
$(\theta \circ \rho_D \circ \varphi) (s_i) = (\rho_A \circ \bar \gamma_p) (s_i) = \rho_A (s_i \Delta_A^{2p}) = T_{a_i} U^p$ for all $1 \le i \le n-1$, where the $a_i$'s are the circles depicted
in Figure \ref{fig3_1}.

{\it Claim.}
We have $\varepsilon=1$, $g_0=U^p$ and $b_i$ is isotopic to $a_i$ in $\Sigma_n$ for all $1\le i\le n-1$.

{\it Proof of the claim.}
Note that $g_0 = \theta (f_0)$ commutes with $T_{b_i} = \theta (T_{c_i})$ and $U = \rho_A (\Delta_A^2)$ commutes with $T_{a_i} = \rho_A (s_i)$, hence $T_{b_i}^{2 \varepsilon} g_0^2 = (T_{b_i}^\varepsilon g_0)^2 = (T_{a_i} U^p)^2 = T_{a_i}^2 U^{2p}$.
Since $g_0^2$ commutes with $T_{b_i}^{2\varepsilon}g_0^2=T_{a_i}^2U^{2p}$ and $U^2\in Z(\MM(\Sigma_n))$, $g_0^2$ commutes with $T_{a_i}^2$ for all $1\le i\le n-1$.
By Lemma \ref{lem5_2} it follows that $g_0^4\in Z(\MM(\Sigma_n))$.
By Proposition \ref{prop3_2} applied to $\MM (\Sigma_n)$ we deduce that $\SS(T_{a_i}^4U^{4p})=\SS(T_{b_i}^{4\varepsilon}g_0^4)=\{[a_i]\}=\{[b_i]\}$, hence $[a_i]=[b_i]$ for all $1\le i\le n-1$. 
Then $T_{a_i}^{4-4\varepsilon}=U^{-4p}g_0^4$, hence, by Proposition \ref{prop3_2}, $4-4\varepsilon=0$, and therefore $\varepsilon=1$.
Finally, from the equality $T_{a_i}U^p=T_{a_i}g_0$ it follows that $g_0=U^p$.
This finishes the proof of the claim.

From the claim it follows that $c_i$ is isotopic to $d_i$ in $\Sigma_n$ hence, by Lemma \ref{lem5_7}, there exists $g\in\Ker(\theta)$ such that $g([c_i]) =[d_i]$ for all $1\le i\le n-2$, $g([c_{n-1}])=[d_{n-1}]$ if $n$ is odd, and either $g ([c_{n-1}])=[d_{n-1}]$ or $g([c_{n-1}])=[d_n]$ if $n$ is even.
These equalities imply that $gT_{c_i}g^{-1}=T_{d_i}$ for $1\le i\le n-2$, $gT_{c_{n-1}}g^{-1}=T_{d_{n-1}}$ if $n$ is odd, and either $gT_{c_{n-1}}g^{-1}=T_{d_{n-1}}$ or $gT_{c_{n-1}}g^{-1}=T_{d_n}$ if $n$ is even.
By Theorem \ref{thm3_5}\,(1) there exists $v\in\Ker(\pi)$ such that $\rho_D(v)=g$.
So, up to composing $\varphi$ on the left by $\ad_v$ first, and composing on the left by $\zeta$ if necessary after, we can assume that $(\rho_D\circ\varphi)(s_i)=T_{d_i}f_0$ for all $1\le i\le n-1$, where $f_0$ commutes with $T_{d_i}$ for all $1\le i\le n-1$.
Since $T_{d_1}=\rho_D(t_1)\in\Im(\rho_D)$, we have $f_0\in\Im(\rho_D)$, hence there exists $u_0\in A[D_n]$ such that $\rho_D(u_0)=f_0$.
Since $\rho_D$ is injective (see Theorem \ref{thm3_5}), we deduce that $\varphi(s_i)=t_iu_0$ for all $1\le i\le n-1$ and $u_0$ commutes with $t_i$ for all $1\le i\le n-1$.
We set $Y=\{t_1,\dots,t_{n-1}\}$, $\Delta_Y=\Delta_Y[D_n]$, $\Delta_D=\Delta[D_n]$, $\kappa=2$ if $n$ is odd, and $\kappa=1$ if $n$ is even.
By Paris \cite[Theorem 1.1]{Paris1} the centralizer of $Y$ in $A[D_n]$ is generated by $\Delta_Y^2$ and $\Delta_D^\kappa$, hence there exists $q,r\in\Z$ such that $u_0=\Delta_Y^{2q}\Delta_D^{\kappa r}$.
We conclude that $\varphi=\beta_{q,r}$.
\end{proof}


\section{Endomorphisms of $A[D_n]$}\label{sec6}

The following lemma is a counterpart of Lemma \ref{lem5_7} for the case of odd $n$, and it is a preliminary to the proof of Theorem \ref{thm2_3}.

\begin{lem}\label{lem6_1}
Let $n$ be an odd number, $n\ge 5$.
Let $c$ be a generic circle of $\Sigma_n\setminus\{x\}$ such that $c\cap d_i=\emptyset$ for $1\le i \le n-3$, $|c\cap d_{n-2}|=1$, $c\cap d_{n-1}=\emptyset$, and $c$ is isotopic to $d_{n-1}$ in $\Sigma_n$.
Then we have one of the following three possibilities.
\begin{itemize}
\item[(1)]
$c$ is isotopic to $d_{n-1}$ in $\Sigma_n\setminus\{x\}$.
\item[(2)]
There exists $g\in\Ker(\theta)$ such that $g([d_i])=[d_i]$ for all $1\le i\le n-1$ and $g([c])=[d_n]$.
\item[(3)]
There exists $g\in\Ker(\theta)$ such that $g([d_i])=[d_i]$ for all $1\le i\le n-2$, $g([d_{n-1}])=[d_n]$ and $g([c])=[d_{n-1}]$.
\end{itemize}
\end{lem}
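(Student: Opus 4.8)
The plan is to adapt the proof of Lemma \ref{lem5_5} to the case of odd $n$, the essential new feature being that $\Sigma_n$ now has a single boundary component, which both changes the shape of the cut-open surface and produces the extra third possibility. As in Lemma \ref{lem5_5}, I would first fix a concrete model: arrange $d_1,\dots,d_{n-1},d_n$ in standard position on the genus $\frac{n-1}{2}$ surface with one boundary component $\partial$ and puncture $x$, recalling that $d_{n-1}$ and $d_n$ are disjoint and cobound an annulus containing $x$ (this is the geometric shadow of $\theta(T_{d_{n-1}})=\theta(T_{d_n})=T_{a_{n-1}}$ in Diagram \ref{eq3_1}). Then I would cut $\Sigma_n$ along $\bigcup_{i=1}^{n-1}d_i$. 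Since the chain $d_1,\dots,d_{n-1}$ fills $\Sigma_n$ and $\Sigma_n$ has only one boundary component, the resulting surface $\Omega$ is connected, and an Euler characteristic count ($\chi(\Omega)=-1$ with two boundary circles and one puncture) identifies $\Omega$ as an annulus with one puncture, one boundary circle being $\partial$ and the other a polygon formed of arcs of $d_1,\dots,d_{n-1}$, with $x$ in its interior. The trace of $d_n$ in $\Omega$ is a fixed interior arc $\ell_n$ joining two points on arcs of $d_{n-2}$, and the trace of $c$, which meets the chain only at $d_{n-2}$ and there only once, is a simple arc $\ell$ joining the two copies $q_1,q_2$ of that intersection point.

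Next I would classify, up to isotopy fixing $\partial\Omega$ pointwise, the simple arcs in $\Omega$ from $q_1$ to $q_2$, keeping track of the position of $x$ relative to the arc, exactly as the arcs $\ell_1,\dots,\ell_4$ were analysed in Lemma \ref{lem5_5}. Because $c$ is isotopic to $d_{n-1}$ in $\Sigma_n$, hence, after filling $x$, to both $d_{n-1}$ and $d_n$, the arc $\ell$ is isotopic in $\Omega$, forgetting $x$, either to the boundary arc along $d_{n-1}$ or to the interior arc $\ell_n$; the isotopy class of $\ell$ in $\Omega\setminus\{x\}$ is then pinned down by which complementary region contains the puncture. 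The admissible configurations yield the three conclusions: if $x$ can be isotoped off the band between $\ell$ and the $d_{n-1}$-boundary arc we are in case (1); if $\ell$ runs parallel to $\ell_n$ on the punctured side we build a point-pushing $g\in\Ker(\theta)$ fixing every $[d_i]$ and carrying $[c]$ to $[d_n]$, which is case (2); the remaining configuration forces case (3).

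The construction of $g$ is where I would spend the most care, and case (3) is the genuinely new point. There $g$ must fix $[d_1],\dots,[d_{n-2}]$, interchange $[d_{n-1}]$ and $[d_n]$, and send $[c]$ to $[d_{n-1}]$. Such a $g$ exists precisely because $d_{n-1}$ and $d_n$ cobound an annulus containing $x$: a homeomorphism supported near this annulus that rotates it so as to swap its two boundary circles is isotopic to the identity once $x$ is filled (as $d_{n-1}\simeq d_n$ in $\Sigma_n$), hence represents an element of $\Ker(\theta)$; composing it with a homeomorphism straightening $\ell$ onto the $d_{n-1}$-arc produces the required $g$. This swap map is unavailable in Lemma \ref{lem5_5}, where the two boundary components separate $d_{n-1}$ from $d_n$ and rigidify the picture; its appearance here is the topological counterpart of the fact, recorded in Section \ref{sec2}, that for odd $n$ the automorphism $\zeta$ exchanging $t_{n-1}$ and $t_n$ is inner, realised by conjugation by $\Delta[D_n]$.

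The main obstacle I anticipate is the arc classification of the second paragraph: one must check that the list of isotopy classes of $\ell$, together with the location of $x$, is complete and that each class falls into exactly one of the three stated cases, excluding any spurious configuration. As in Lemma \ref{lem5_5} this is handled by cutting $\Omega$ further along the reference arcs and determining directly which component contains $x$, but the bookkeeping is heavier here because $\Omega$ is connected and both $d_{n-1}$ and $d_n$ interact with the same punctured region.
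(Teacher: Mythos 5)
Your global strategy (cut $\Sigma_n$ along $\bigcup_{i=1}^{n-1}d_i$, identify the cut surface $\Omega$ as a punctured annulus whose outer boundary is a polygon of arcs of the $d_i$'s, classify the trace arc $\ell$ of $c$, then split according to the position of the puncture) is the same as the paper's, and your identification of $\Omega$ agrees with the paper's octagon-with-hole picture. The genuine gap is in your construction of $g$ for case (3), which you rightly single out as the new phenomenon. You propose ``a homeomorphism supported near this annulus [the annulus $A$ cobounded by $d_{n-1}$ and $d_n$ and containing $x$] that rotates it so as to swap its two boundary circles.'' No such homeomorphism exists: a homeomorphism of $\Sigma_n$ equal to the identity outside a neighborhood $N$ of $A$ restricts to a homeomorphism of $N$ fixing $\partial N$ pointwise, and such a map cannot exchange the two disjoint core circles $d_{n-1}$ and $d_n$ of $N$. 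Indeed, it must preserve the closure of the component of $N\setminus(d_{n-1}\cup d_n)$ containing a given component of $\partial N$, and that closure meets $\{d_{n-1},d_n\}$ in exactly one circle, which is therefore preserved; equivalently, the mapping class group of an annulus rel boundary is generated by the Dehn twist about the core, which fixes the isotopy class of every core circle. In the paper's terms: every element of $\Ker(\theta)=\pi_1(\Sigma_n,x)$ supported near $A$ is a point-push along a power of the core of $A$, and all such elements fix both $[d_{n-1}]$ and $[d_n]$. Your appeal to the innerness of $\zeta$ does not rescue this: the mapping class $\rho_D(\Delta[D_n])$ realizing $\zeta$ satisfies $\theta(\rho_D(\Delta[D_n]))=\rho_A(\Delta[A_{n-1}]^2)\neq 1$, so it is not an element of $\Ker(\theta)$ and cannot serve as (a factor of) $g$.

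What the paper actually does here is necessarily non-local: it first builds $g_1\in\Ker(\theta)$ carrying $[c]$ to the class of an auxiliary circle $c'$, and then composes with the point-pushing element $g_2\in\pi_1(\Sigma_n,x)$ along a loop $\mu$ (Figure \ref{fig6_3}) that leaves the annulus $A$ and travels around a handle of the surface; it is this essential loop that yields $g_2([d_{n-1}])=[d_n]$ and $g_2([c'])=[d_{n-1}]$ while fixing $[d_i]$ for $i\le n-2$ (note that $g_2$ shifts these classes, it is not a local swap). A secondary weakness: you never pin down when case (2) rather than case (3) occurs; ``the remaining configuration forces case (3)'' is not an argument. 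The paper's criterion is combinatorial: $d_{n-3}$ and $d_{n-1}$ cut $d_{n-2}$ into two arcs, $e_1$ (crossed by $d_n$) and $e_2$ (not crossed by $d_n$), and the point $q=c\cap d_{n-2}$ lies on $e_1$ in the configurations leading to case (2) and on $e_2$ in those leading to case (3). This distinction is essential because an element of $\Ker(\theta)$ fixing all $[d_i]$, $i\le n-1$, preserves each of $e_1$ and $e_2$ (it fixes the two intersection points and cannot reverse $d_{n-2}$, since it acts trivially on $H_1(\Sigma_n)$), so a circle crossing $e_2$ can never be carried to $[d_n]$ by such an element. Your stated dichotomy, ``$\ell$ is parallel to the $d_{n-1}$-side or to $\ell_n$,'' conflates the two situations: for $q$ on $e_1$ the two options coincide as isotopy classes in $\Omega$, while for $q$ on $e_2$ the arc $\ell_n$ is not even a candidate, since its endpoints lie on different sides of the octagon.
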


\begin{proof}
The surface $\Sigma_n$ is a surface of genus $\frac{n-1}{2}$ with one boundary component, $\partial$.
We assume that the circles $d_1,\dots,d_{n-1},d_n$ are arranged as shown in Figure \ref{fig6_1}.
The circles $d_{n-3}$ and $d_{n-1}$ divide $d_{n-2}$ into two arcs, $e_1$ and $e_2$, where the arc $e_1$ intersects $d_n$ and the arc $e_2$ does not intersect $d_n$ (see Figure \ref{fig6_1}).
Let $\Omega$ be the surface obtained by cutting $\Sigma_n$ along $\bigcup_{i=1}^{n-1}d_i$.
Then $\Omega$ is a cylinder represented by an octagon with a hole in the middle (see Figure \ref{fig6_2}).
Two opposite sides of this octagon represent arcs of $d_{n-1}$ and two opposite sides represent arcs of $d_1,\dots,d_{n-3}$, as shown in the figure.
Two other sides represent arcs of $e_1$ and the last two sides represent arcs of $e_2$, arranged as shown in Figure \ref{fig6_2}.
The boundary of the hole represents $\partial$.

\begin{figure}[ht!]
\begin{center}
\includegraphics[width=6.6cm]{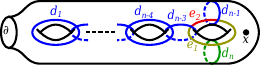}
\caption{The circles $d_1,\dots,d_n$}\label{fig6_1}
\end{center}
\end{figure}

\begin{figure}[ht!]
\begin{center}
\includegraphics[width=3.6cm]{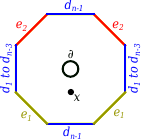}
\qquad
\includegraphics[width=3.6cm]{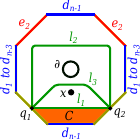}
\qquad
\includegraphics[width=3.6cm]{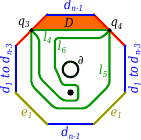}
\caption{The surface $\Omega$}\label{fig6_2}
\end{center}
\end{figure}

The circle $c$ intersects $d_{n-2}$ in a point $q$, and $q$ is either on the arc $e_1$ or on the arc $e_2$.
Suppose first that $q$ is on the arc $e_1$.
Then $q$ is represented on $\Omega$ by two points $q_1$ and $q_2$ lying on two different sides of $\Omega$ that represent $e_1$, and the trace of $c$ in $\Omega$ is a simple arc $\ell$ connecting $q_1$ to $q_2$.
Up to isotopy (in $\Omega$ and not in $\Omega\setminus\{x\}$) pointwise fixing the boundary of $\Omega$, there are exactly two simple arcs in $\Omega$ connecting $q_1$ to $q_2$ represented by the arcs $\ell_1$ and $\ell_2$ depicted in  Figure \ref{fig6_2}.
The arc $\ell$ cannot be isotopic to $\ell_2$, otherwise $c$ would not be isotopic to $d_{n-1}$ in $\Sigma_n$.
So, $\ell$ is isotopic to $\ell_1$ in $\Omega$.
Let $\{F_t:\Omega\to \Omega\}_{t\in[0,1]}$ be an isotopy such that $F_0=\id$, $F_1(\ell)=\ell_1$ and $F_t$ is the identity on the boundary of $\Omega$ for all $t\in [0,1]$.
The arc $\ell_1$ divides $\Omega$ into two parts, the lower one which does not contain the hole bounded by $\partial$ and the puncture $x$, and the upper one which contains the hole bounded by $\partial$ and the puncture $x$, as shown in Figure \ref{fig6_2}.

Suppose $F_1(x)$ is in the upper part.
Let $C$ be the domain of $\Omega$ bounded by $\ell_1$, two arcs of $e_1$ and an arc of $d_{n-1}$ as shown in Figure \ref{fig6_2}.
Let $C'=F_1^{-1}(C)$.
Then $C'$ is a domain of $\Omega$ bounded by $\ell$, two arcs of $e_1$ and an arc of $d_{n-1}$ which does not contain the puncture $x$.
The existence of such a domain implies that $c$ is isotopic to $d_{n-1}$ in $\Sigma_n\setminus\{x\}$.

Suppose $F_1(x)$ is in the lower part.
We can suppose that the trace of $d_n$ on $\Omega$ is the arc $\ell_3$ depicted in Figure \ref{fig6_2}.
We can choose an isotopy $\{F'_t:\Omega\to\Omega\}_{t\in[0,1]}$ such that $F'_0=\id$, $F_1'(\ell_1)= \ell_3$, $F_t'$ is the identity on the boundary of $\Omega$ for all $t\in [0,1]$, and $F_1'(F_1(x))=x$.
Let $\tilde F:\Sigma_n\to\Sigma_n$ be the homeomorphism which is $F_1'\circ F_1$ on $\Omega$ and is the identity outside $\Omega$, and let $g\in\MM(\Sigma_n,x)$ be the mapping class represented by $\tilde F$.
Then $g\in\Ker(\theta)$, $g([d_i])=[d_i]$ for all $1\le i\le n-1$, and $g([c])=[d_n]$.

Suppose now that $q$ is on the arc $e_2$.
Then $q$ is represented on $\Omega$ by two points $q_3$ and $q_4$ lying on two different sides of $\Omega$ which represent $e_2$, and the trace of $c$ in $\Omega$ is a simple arc $\ell$ connecting $q_3$ to $q_4$.
Up to isotopy (in $\Omega$ and not in $\Omega\setminus\{x\}$) pointwise fixing the boundary of $\Omega$, there are exactly two simple arcs in $\Omega$ connecting $q_3$ to $q_4$ represented by the arcs $\ell_4$ and $\ell_5$ depicted in Figure \ref{fig6_2}.
The arc $\ell$ cannot be isotopic to $\ell_5$, otherwise $c$ would not be isotopic to $d_{n-1}$ in $\Sigma_n$.
So, $\ell$ is isotopic to $\ell_4$ in $\Omega$.
Let $\{F_t:\Omega\to\Omega\}_{t\in[0,1]}$ be an isotopy such that $F_0=\id$, $F_1(\ell)=\ell_4$ and $F_t$ is the identity on the boundary of $\Omega$ for all $t\in [0,1]$.
The arc $\ell_4$ divides $\Omega$ into two parts, the upper one which does not contain the hole bounded by $\partial$ and the puncture $x$, and the lower one which contains the hole bounded by $\partial$ and the puncture $x$, as shown in Figure \ref{fig6_2}.

Suppose $F_1(x)$ is in the lower part.
Let $D$ be the domain of $\Omega$ bounded by $\ell_4$, two arcs of $e_2$ and an arc of $d_{n-1}$ as shown in Figure \ref{fig6_2}.
Let $D'=F_1^{-1}(D)$.
Then $D'$ is a domain of $\Omega$ bounded by $\ell$, two arcs of $e_2$ and an arc of $d_{n-1}$ which does not contain the puncture $x$. 
The existence of such a domain implies that $c$ is isotopic to $d_{n-1}$ in $\Sigma_n\setminus\{x\}$.

Suppose $F_1(x)$ is in the upper part.
Let $c'$ be the circle drawn in Figure \ref{fig6_3}.
We can assume that the trace of $c'$ on $\Omega$ is the arc $\ell_6$ drawn in Figure \ref{fig6_2}.
We can choose an isotopy $\{F'_t:\Omega\to\Omega\}_{t\in[0,1]}$ such that $F'_0=\id$, $F_1'(\ell_4)= \ell_6$, $F_t'$ is the identity on the boundary of $\Omega$ for all $t\in [0,1]$, and $F_1'(F_1(x))=x$.
Let $\tilde F:\Sigma_n\to\Sigma_n$ be the homeomorphism which is $F_1'\circ F_1$ on $\Omega$ and is the identity outside $\Omega$, and let $g_1\in\MM(\Sigma_n,x)$ be the mapping class represented by $\tilde F$.
Then $g_1\in\Ker(\theta)$, $g_1([d_i])=[d_i]$ for all $1\le i\le n-1$, and $g_1([c])=[c']$.

\begin{figure}[ht!]
\begin{center}
\includegraphics[width=7.6cm]{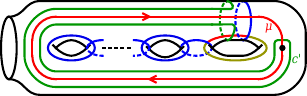}
\caption{The circle $c'$ and the loop $\mu$}\label{fig6_3}
\end{center}
\end{figure}

Let $g_2\in\pi_1(\Sigma_n,x)=\Ker(\theta)$ be the element represented by the loop $\mu$ drawn in Figure \ref{fig6_3}.
Let us mention here that $g_2$ is not the Dehn twist $T_\mu$ along $\mu$, but rather the image of the point-pushing map applied to $\mu$, which is equal to $T_{\mu_1} T_{\mu_2}^{-1}$ for $\mu_1$ and $\mu_2$ the two boundary curves of a small regular neighborhood of $\mu$, as explained in Farb--Margalit \cite[Section 4.2.2]{FarMar1}.
We have $g_2([d_i])=[d_i]$ for all $1\le i\le n-2$, $g_2([d_{n-1}])=[d_n]$ and $g_2([c'])=[d_{n-1}]$.
Set $g=g_2\circ g_1$.
Then $g\in\Ker(\theta)$, $g([d_i])=[d_i]$ for all $1\le i\le n-2$, $g([d_{n-1}])=[d_n]$ and $g([c])=[d_{n-1}]$.
\end{proof}

\begin{proof}[Proof of Theorem \ref{thm2_3}]
Let $n\ge6$.
Let $\varphi:A[D_n]\to A[D_n]$ be an endomorphism.
Consider the composition homomorphism $\varphi \circ \iota : A [A_{n-1}] \to A[D_n] \to A[D_n]$.
We know from Theorem \ref{thm2_2} that we have one of the following two possibilities up to conjugation.
\begin{itemize}
\item[(1)]
$\varphi\circ\iota$ is cyclic.
\item[(2)]
There exist $\psi\in\langle\zeta,\chi\rangle$ and $p,q\in\Z$ such that 
$\varphi\circ\iota=\psi\circ\beta_{p,q}$.
\end{itemize}

Suppose $\varphi\circ\iota$ is cyclic.
Then there exists $u\in A[D_n]$ such that $\varphi(t_i)=(\varphi\circ\iota)(s_i)=u$ for all $1\le i\le n-1$.
We also have
\begin{gather*}
\varphi(t_n)=\varphi(t_{n-2}t_nt_{n-2}t_n^{-1}t_{n-2}^{-1})=\varphi(t_{n-2}t_n )\,\varphi(t_{n-2})\,
\varphi(t_n^{-1}t_{n-2}^{-1})=\\
\varphi(t_{n-2}t_n)\,\varphi(t_1)\,\varphi(t_n^{-1}t_{n-2}^{-1})=
\varphi(t_1)=u\,,
\end{gather*}
hence $\varphi$ is cyclic.

So, we can assume that there exist $\psi\in\langle\zeta,\chi\rangle$ and $p,q\in\Z$ such that $\varphi\circ\iota$ is conjugate to $\psi\circ\beta_{p,q}$.
We set $Y=\{t_1,\dots,t_{n-2},t_{n-1}\}$, $\Delta_Y=\Delta_Y[D_n]$, $\Delta_D=\Delta[D_n]$, $\kappa=2$ if $n$ is odd, and $\kappa=1$ if $n$ is even.
Up to conjugating and composing $\varphi$ on the left by $\zeta$ if necessary, we can assume that there exist $\varepsilon\in\{\pm 1\}$ and $p,q\in\Z$ such that $\varphi(t_i)=(\varphi\circ\iota)(s_i)=t_i^\varepsilon\Delta_{Y}^{2p}\Delta_D^{\kappa q}$ for all $1\le i\le n-1$. 
The remainder of the proof is divided into four cases depending on whether $p$ is zero or not and whether $n$ is even or odd.

{\it Case 1: $n$ is even and $p\neq 0$.}
Then $\Sigma_n$ is a surface of genus $\frac{n-2}{2}$ with two boundary components, $\partial_1$ and $\partial_2$, and $\kappa=1$.
We have $\rho_D(t_i)=T_{d_i}$ for $1\le i\le n-1$ and, by Labruère--Paris \cite[Proposition 2.12]{LabPar1}, $\rho_D(\Delta_Y^2)=T_eT_{\partial_1}$ and $\rho_D(\Delta_D)=T_{\partial_1}T_{\partial_2}$, where $e$ is the circle drawn in Figure \ref{fig6_4}.
Set $f_i=(\rho_D\circ\varphi)(t_i)$ for all $1\le i\le n$.
Then, by the above,
\[
f_i=T_{d_i}^\varepsilon T_e^{p}T_{\partial_1}^{p+q}T_{\partial_2}^q\quad\text{for all }1\le i\le n-1\,.
\]
In particular, $\SS(f_i)=\{[d_i],[e]\}$ for all $1\le i\le n-1$.
Since $t_n$ is conjugate in $A[D_n]$ to $t_1$, $f_n$ is conjugate to $f_1$ in $\MM(\Sigma_n,x)$, hence $f_n$ is of the form $f_n=T_{d'}^\varepsilon T_{e'}^{p} T_{\partial_1}^{p+q}T_{\partial_2}^q$, where $d'$ is a non-separating circle and $e'$ is a circle that separates $\Sigma_n$ into two components, one being a cylinder containing $x$ and the other being a surface of genus $\frac{n-2}{2}$ with two boundary components, $\partial_1$ and $e'$, which does not contain $x$.
Moreover, by Theorem \ref{thm2_1}, $(\pi\circ\varphi)(t_{n-1})=(\pi\circ\varphi)(t_n)$, hence
\[
T_{d_{n-1}}^\varepsilon T_e^p T_{\partial_1}^{p+q} T_{\partial_2}^q = \theta (f_{n-1}) = \theta (f_n) = T_{d'}^\varepsilon T_{e'}^p T_{\partial_1}^{p+q} T_{\partial_2}^q 
\]
on $\Sigma_n$, that is, $T_{d_{n-1}}^\varepsilon T_e^p =  T_{d'}^\varepsilon T_{e'}^p$ as multitwists on $\Sigma_n$.
Now we can invoke Farb--Margalit \cite[Lemma 3.14]{FarMar1} to conclude that each curve of the set $\{ d_{n-1}, e \}$ is isotopic to a curve from the set $\{d', e'\}$ in $\Sigma_n$. 
To decide which curve of one set is isotopic to which curve in the other set we observe that removing a puncture does not change the property of a curve being non-separating, but can make a separating curve peripheral. 
Since both $d_{n-1}$ and $d'$ are non-separating, whereas $e$ and $e'$ are both separating or peripheral in $\Sigma_n$, we conclude that $d_{n-1}$ is isotopic to $d'$ in $\Sigma_n$ (and also that $e$ is isotopic to $e'$ in $\Sigma_n$).

We have $f_1f_n=f_nf_1$, hence by Theorem \ref{thm3_1}\,(3) we have $f_n(\SS(f_1))=\SS(f_1)$, thus $[e]$ is a reduction class for $f_n$, and therefore $i([e],[e'])=0$, because $[e']$ is an essential reduction class for $f_n$.
We can choose representatives $e$ and $e'$ such that $e \cap e' = \emptyset$ either by eliminating bigons, or by choosing geodesic representatives. 
Let $C, C' \subset \Sigma_n$ be cylinders containing $x$ and having boundary $\partial_2 \cup e$, $\partial_2 \cup e'$, respectively. 
Then either $C \subset C'$ if $e \subset C'$, or $C' \subset C$ if $e' \subset C$, with $x \in C \cap C'$. 
Say $C \subset C'$. 
Being a separating circle on $\Sigma_n$, $e$ separates $C'$ into two subsurfaces, one containing $\partial_2$ and $x$, and the other containing $e'$.
Being a subsurface with two boundary components lying inside a cylinder, the latter must be a cylinder itself. 
This cylinder establishes an isotopy between $e$ and $e'$ in $\Sigma_n \setminus \{ x\}$, hence $[e] = [e']$.
So, we can assume that $e=e'$.

Choose representatives $d_{n-1}$, $d'$ in minimal position in $\Sigma_n \setminus \{ x\}$.
Denote by $C_0$ and $\Sigma'$ the two components into which the curve $e$ separates $\Sigma_n$, with $C_0$ being a cylinder containing $x$, and $\Sigma'$ being the rest of the surface $\Sigma_n$, containing $d_1, \dots, d_{n-1}$. 
Suppose $d_{n-1} \cap d' \neq \emptyset$.
Then $d_{n-1}$ and $d'$ cobound a bigon. 
Since $d_{n-1}$ and $d'$ were chosen to be in minimal position in $\Sigma_n \setminus \{ x\}$, such a bigon must contain $x$. 
This implies that $d'$ has nonempty intersection with the cylinder $C_0$ which $e$ separates from the rest of the surface $\Sigma_n$, and since $e$ and $d'$ are disjoint, $d'$ lies entirely in $C_0$.
This is not possible because any generic circle in $C_0$ is peripheral in $\Sigma_n$ and $d'$ is non-separating in $\Sigma_n$.
So, $d_{n-1} \cap d' = \emptyset$.
Then there exists an embedded cylinder $C$ in $\Sigma_n$ with boundary components $d_{n-1}$ and $d'$.
Since $e$ is disjoint from $d'$ and $d_{n-1}$, $e$ either lies entirely in $C$ or is disjoint from $C$. 
The circle $e$ cannot lie entirely in $C$ because $e$ is peripheral in $\Sigma_n$ and, since both $d_{n-1}$ and $d'$ are non-separating in $\Sigma_n$, any generic circle lying in $C$ must be non-separating.
So, $e$ is disjoint from $C$, hence $C$ lies in $\Sigma'$, and therefore $d_{n-1}$ is isotopic to $d'$ in $\Sigma_n \setminus \{ x\}$.
Thus, we can also assume $d' = d_{n-1}$.

In conclusion we have $(\rho_D\circ\varphi)(t_{n-1})=(\rho_D\circ\varphi)(t_n)= T_{d_{n-1}}^\varepsilon T_e^{p}T_{\partial_1}^{p+q}T_{\partial_2}^q$, hence $\varphi(t_{n-1})=\varphi(t_n)=t_{n-1}^\varepsilon\Delta_Y^{2p}\Delta_D^q$.
We conclude that $\varphi=\beta_{p,q}\circ\pi$ if $\varepsilon=1$ and $\varphi=\chi\circ\beta_{-p,-q}\circ\pi$ if $\varepsilon=-1$.

\begin{figure}[ht!]
\begin{center}
\includegraphics[width=5.8cm]{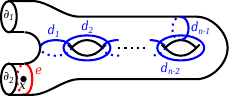}
\caption{Circles in $\Sigma_n$, case where $n$ is even and $p\neq 0$}\label{fig6_4}
\end{center}
\end{figure}

{\it Case 2: $n$ is odd and $p\neq 0$.}
Then $\Sigma_n$ is a surface of genus $\frac{n-1}{2}$ with one boundary component, $\partial$, and $\kappa=2$.
We have $\rho_D(t_i)=T_{d_i}$ for $1\le i\le n-1$ and, by Labruère--Paris \cite[Proposition 2.12]{LabPar1}, $\rho_D(\Delta_Y^4)=T_e$ and $\rho_D(\Delta_D^2)=T_{\partial}$, where $e$ is the circle drawn in Figure \ref{fig6_5}.
Set $f_i=(\rho_D\circ\varphi)(t_i)$ for all $1\le i\le n$.
Then, by the above,
\[
f_i^2=T_{d_i}^{2\varepsilon}T_e^{p}T_{\partial}^{2q}\quad\text{for all }1\le i\le n-1\,.
\]
In particular, $\SS(f_i)=\SS(f_i^2)=\{[d_i],[e]\}$ for all $1\le i\le n-1$.
The element $t_n$ is conjugate to $t_1$ in $A[D_n]$, hence $\varphi(t_n)$ is conjugate to $\varphi(t_1)$ in $A[D_n]$, and therefore there exists $ v\in A[D_n]$ such that $\varphi(t_n)=v\,\varphi(t_1)\,v^{-1}=(vt_1^\varepsilon v^{-1})(v\Delta_Y^{2p}v^{-1})\Delta_D^{2q}$.
The element $\rho_D(vt_1v^{-1})$ is conjugate to $\rho_D(t_1)=T_{d_1}$, hence $\rho_D(vt_1v^{-1})=T_{d'}$, where $d'$ is a non-separating circle.
The element $\rho_D(v\Delta_Y^4v^{-1})$ is conjugate to $\rho_D(\Delta_Y^4)=T_e$, hence $\rho_D(v\Delta_Y^4v^{-1})=T_{e'}$, where $e'$ is a circle that separates $\Sigma_n$ into two components, one being a cylinder containing $x$ and the other being a surface of genus $\frac{n-1}{2}$ with one boundary component which does not contain $x$.
We also have $f_n^2=T_{d'}^2T_{e'}^pT_\partial^{2q}$ and $\SS(f_n)=\SS(f_n^2)=\{[d'],[e']\}$.
By Theorem \ref{thm2_1} $(\pi\circ\varphi)(t_{n-1})=(\pi\circ\varphi)(t_n)$, hence $\theta(f_{n-1}^2)=\theta(f_n^2)$.
This implies that $d'$ is isotopic to $d_{n-1}$ in $\Sigma_n$.

Since $f_1f_n=f_nf_1$, by Theorem \ref{thm3_1}\,(3) we have $f_n^2(\SS(f_1))=\SS(f_1)$, hence $[e]$ is a reduction class for $f_n^2$, and therefore $i([e],[e'])=0$, because $[e']$ is an essential reduction class for $f_n^2$.
As in Case 1, we can choose representatives $e$ and $e'$ such that $e \cap e' = \emptyset$. 
Let $C, C' \subset \Sigma_n$ be cylinders containing $x$ and having boundary $\partial \cup e$, $\partial \cup e'$, respectively. 
Then either $C \subset C'$ if $e \subset C'$, or $C' \subset C$ if $e' \subset C$, with $x \in C \cap C'$. 
Say $C \subset C'$. 
Being a separating circle on $\Sigma_n$, $e$ separates $C'$ into two subsurfaces, one containing $\partial$ and $x$, and the other containing $e'$.
Being a subsurface with two boundary components lying inside a cylinder, the latter must be a cylinder itself. 
This cylinder establishes an isotopy between $e$ and $e'$ in $\Sigma_n \setminus \{ x\}$, hence $[e] = [e']$.
So, we can assume that $e = e'$, hence $\rho_D (v \Delta_Y^4 v^{-1}) = T_{e'} = T_e = \rho_D (\Delta_Y^4)$. Since $\rho_D$ is injective, it follows that $v \Delta_Y^4 v^{-1} = \Delta_Y^4$.

Using the same argument as in Case 1, from the fact that $d'$ does not intersect $e' = e$ and that $d'$ is isotopic to $d_{n-1}$ in $\Sigma_n$ it follows that $d'$ is isotopic to $d_{n-1}$ in $\Sigma_n \setminus \{ x \}$, hence we can also assume that $d' = d_{n-1}$.
Then $\rho_D (vt_1v^{-1}) = T_{d'} = T_{d_{n-1}} = \rho_D (t_{n-1})$, hence, since $\rho_D$ is injective, $vt_1v^{-1}=t_{n-1}$.
At this stage of the proof we have that $\varphi(t_n)=t_{n-1}^{\varepsilon}(v\Delta_Y^{2p}v^{-1})\Delta_D^{2q}$ and $(v\Delta_Y^{2p}v^{-1})^2=v\Delta_Y^{4p}v^{-1}=\Delta_Y^{4p}$.
It remains to show that $v\Delta_Y^{2p}v^{-1}=\Delta_Y^{2p}$.

\begin{figure}[ht!]
\begin{center}
\includegraphics[width=5.4cm]{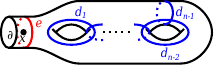}
\caption{Circles in $\Sigma_n$, case where $n$ is odd and $p\neq 0$}\label{fig6_5}
\end{center}
\end{figure}

By Theorem \ref{thm2_2} there exists $\psi\in\langle\zeta,\chi\rangle$ and $r,s\in\Z$ such that $\varphi\circ\zeta\circ\iota$ is conjugate to $\psi\circ\beta_{r,s}$.
The automorphism $\zeta$ is inner since $n$ is odd, hence we can assume that $\psi\in\langle\chi\rangle$.
So, there exist $w\in A[D_n]$, $\mu\in\{\pm 1\}$ and $r,s\in\Z$ such that $\varphi(t_i)=wt_i^\mu\Delta_Y^{2r}\Delta_D^{2s}w^{-1}$ for all $1\le i\le n-2$ and $\varphi(t_n)= wt_{n-1}^\mu\Delta_Y^{2r}\Delta_D^{2s}w^{-1}$.
Set $g=\rho_D(w)$.
We have $(\rho_D\circ\varphi)(t_i^2)=T_{d_i}^{2 \varepsilon}T_e^pT_{\partial}^{2q}=gT_{d_i}^{2 \mu}T_e^rT_{\partial}^{2s} g^{-1}$ for all $1\le i\le n-2$ and $(\rho_D\circ\varphi)(t_n^2)=T_{d_{n-1}}^{2 \varepsilon} T_e^p T_{\partial}^{2q} = g T_{d_{n-1}}^{2 \mu} T_e^rT_{\partial}^{2s}g^{-1}$.
So, $g^{-1}(\SS(T_{d_i}^{2 \varepsilon}T_e^pT_{\partial}^{2q}))=\SS(T_{d_{i}}^{2\mu}T_e^rT_{\partial}^{2s})$, hence $g^{-1}(\{[d_i],[e]\})\subset\{[d_i],[e]\}$ for all $1\le i\le n-1$.
This implies $g^{-1}([d_i])=[d_i]$, hence $g$ commutes with $T_{d_i}$, and therefore $w$ commutes with $t_i$ for all $1\le i \le n-1$.
Since $\Delta_Y$ is in the subgroup of $A[D_n]$ generated by $Y=\{t_1,\dots,t_{n-1}\}$ and $\Delta_D^2$ is central, it follows that $\varphi(t_i)=t_i^\mu\Delta_Y^{2r}\Delta_D^{2s}$ for all $1\le i\le n-2$ and $\varphi(t_n)=t_{n-1}^\mu\Delta_Y^{2r}\Delta_D^{2s}$.
Consider the equality $\varphi (t_1) = t_1^\varepsilon \Delta_Y^{2p} \Delta_D^{2q} = t_1^\mu \Delta_Y^{2r} \Delta_D^{2s}$. 
Then $t_1^{\varepsilon - \mu} \Delta_Y^{2 (p - r)} = \Delta_D^{2 (s - q)}$. 
The right-hand side of this equality lies in the center of $A[D_n]$, the left-hand side lies in $A_Y[D_n]$ and, by Paris \cite[Corollary 2.6]{Paris3}, the intersection of $A_Y [D_n]$ with the center of $A [D_n]$ is trivial, hence $s = q$ and $t_1^{\varepsilon - \mu} = \Delta_Y^{2 (r -p)}$. 
The element $\Delta_Y^{2 (r -p)}$ lies in the center of $A_Y [D_n]$ and $\langle t_1 \rangle$ is a proper parabolic subgroup of $A_Y [D_n]$, hence, again by Paris \cite[Corollary 2.6]{Paris3}, $t_1^{\varepsilon - \mu} = \Delta_Y^{2 (r -p)} = 1$, and therefore $\varepsilon = \mu$ and $r = p$. 
Here we use that $A[D_n]$ is torsion-free, which follows from Deligne \cite{Delig1} where it is proved that $A[D_n]$ has a finite-dimensional classifying space.
So, $\varphi(t_n)=t_{n-1}^\varepsilon\Delta_Y^{2p}\Delta_D^{2q}$.
We conclude that $\varphi=\beta_{p,q}\circ\pi$ if $\varepsilon=1$ and $\varphi=\chi\circ\beta_{-p,-q}\circ\pi$ if $\varepsilon=-1$.

{\it Case 3: $n$ is even and $p=0$.}
Then, again, $\Sigma_n$ is a surface of genus $\frac{n-2}{2}$ with two boundary components, $\partial_1$ and $\partial_2$, and $\kappa=1$.
We have $\rho_D(t_i)=T_{d_i}$ for $1\le i \le n-1$ and, by Labruère--Paris \cite[Proposition 2.12]{LabPar1}, $\rho_D(\Delta_D)= T_{\partial_1}T_{\partial_2}$.
Set $f_i=(\rho_D\circ\varphi)(t_i)$ for all $1\le i\le n$.
Then, by the above,
\[
f_i=T_{d_i}^\varepsilon T_{\partial_1}^{q}T_{\partial_2}^q\quad\text{for all }1\le i\le n-1\,.
\]
In particular, $\SS(f_i)=\{[d_i]\}$ for all $1\le i\le n-1$.
Since $t_n$ is conjugate in $A[D_n]$ to $t_1$, $f_n$ is of the form $f_n=T_{d'}^\varepsilon T_{\partial_1}^{q}T_{\partial_2} ^q$ where $d'$ is a non-separating circle.

For $1\le i\le n-3$ we have $t_it_n=t_nt_i$, hence $T_{d_i}T_{d'}=T_{d'}T_{d_i}$, and therefore, by Proposition \ref{prop3_0}, $i([d_i],[d'])=0$.
Similarly, we have $i([d_{n-1}],[d'])=0$.
Since $t_{n-2}t_nt_{n-2}=t_nt_{n-2}t_n$, we have $T_{d_{n-2}}T_{d'}T_{d_{n-2}} =T_{d'}T_{d_{n-2}}T_{d'}$, hence, by Proposition \ref{prop3_0}, $i([d_{n-2}],[d'])=1$.
So  we can assume that $d_i\cap d'=\emptyset$ for $1\le i\le n-3$, $d_{n-1}\cap d'=\emptyset$ and $|d_{n-2}\cap d'|=1$.
Moreover, by Theorem \ref{thm2_1}, $(\pi\circ\varphi)(t_{n-1})=(\pi\circ\varphi)(t_n)$, hence $\theta(f_{n-1})=\theta(f_n)$, and therefore $d'$ is isotopic to $d_{n-1}$ in $\Sigma_n$.
By Lemma \ref{lem5_5} it follows that we have one of the following two possibilities.
\begin{itemize}
\item[(1)]
$d'$ is isotopic to $d_{n-1}$ in $\Sigma_n\setminus\{x\}$.
\item[(2)]
There exists $g\in\Ker(\theta)$ such that $g([d_i])=[d_i]$ for all $1\le i\le n-1$ and $g([d'])=[d_n]$.
\end{itemize}

Suppose $d'$ is isotopic to $d_{n-1}$ in $\Sigma_n\setminus\{x\}$.
Then $(\rho_D\circ\varphi)(t_n)=T_{d_{n-1}}^\varepsilon T_{\partial_1}^{q}T_{\partial_2}^q$, hence, since $\rho_D$ is injective, $\varphi(t_n)=t_{n-1}^\varepsilon\Delta_D^q$.
We conclude that $\varphi=\beta_{0,q}\circ\pi$ if $\varepsilon=1$ and $\varphi=\chi\circ\beta_{0,-q}\circ\pi$ if $\varepsilon=-1$.

Suppose there exists $g\in\Ker(\theta)$ such that $g([d_i])=[d_i]$ for all $1\le i\le n-1$ and $g([d'])=[d_n]$.
We have
\[
(\rho_D\circ\varphi)(t_i)=
T_{d_i}^\varepsilon T_{\partial_1}^qT_{\partial_2}^q=
g^{-1}T_{d_i}^\varepsilon T_{\partial_1}^qT_{\partial_2}^qg
\]
for all $1\le i\le n-1$ and
\[
(\rho_D\circ\varphi)(t_n)=
T_{d'}^\varepsilon T_{\partial_1}^qT_{\partial_2}^q=
g^{-1}T_{d_n}^\varepsilon T_{\partial_1}^qT_{\partial_2}^qg\,.
\]
By Theorem \ref{thm3_5} there exists $v\in\Ker(\pi)\subset A[D_n]$ such that $\rho_D(v)=g$.
Since, $\rho_D$ is injective it follows that
\[
\varphi(t_i)=v^{-1}t_i^\varepsilon\Delta_D^qv\quad\text{for all }1\le i\le n\,.
\]
We conclude that $\varphi=\ad_{v^{-1}}\circ\gamma_q$ if $\varepsilon=1$ and $\varphi=\ad_{v^{-1}}\circ\chi\circ\gamma_{-q}$ if $\varepsilon=-1$.

{\it Case 4: $n$ is odd and $p=0$.}
Then, again, $\Sigma_n$ is a surface of genus $\frac{n-1}{2}$ with one boundary component, $\partial$, and $\kappa=2$.
We have $\rho_D(t_i)=T_{d_i}$ for $1\le i\le n-1$ and, by Labruère--Paris \cite[Proposition 2.12]{LabPar1}, $\rho_D(\Delta_D^2)=T_{\partial}$.
Set $f_i=(\rho_D\circ\varphi)(t_i)$ for all $1\le i\le n$.
Then, by the above,
\[
f_i=T_{d_i}^\varepsilon T_{\partial}^{q}\quad\text{for all }1\le i\le n-1\,.
\]
In particular, $\SS(f_i)=\{[d_i]\}$ for all $1\le i\le n-1$.
Since $t_n$ is conjugate in $A[D_n]$ to $t_1$, $f_n$ is conjugate to $f_1$ in $\MM(\Sigma_n,x)$, hence $f_n$ is of the form $f_n=T_{d'}^\varepsilon T_{\partial}^q$ where $d'$ is a non-separating circle.

For $1\le i\le n-3$ we have $t_it_n=t_nt_i$, hence $T_{d_i}T_{d'}=T_{d'}T_{d_i}$, and therefore, by Proposition \ref{prop3_0}, $i([d_i],[d'])=0$.
Similarly, we have $i([d_{n-1}],[d'])=0$.
Since $t_{n-2}t_nt_{n-2}=t_nt_{n-2}t_n$, we have $T_{d_{n-2}}T_{d'}T_{d_{n-2}}=T_{d'}T_{d_{n-2}}T_{d'}$, hence, by Proposition \ref{prop3_0}, $i([d_{n-2}],[d'])=1$.
So, we can assume that $d_i\cap d'=\emptyset$ for $1\le i \le n-3$, $d_{n-1}\cap d'=\emptyset$ and $|d_{n-2}\cap d'|=1$.
Moreover, by Theorem \ref{thm2_1}, $(\pi\circ\varphi)(t_{n-1})=(\pi\circ\varphi)(t_n)$, hence $\theta(f_{n-1})=\theta(f_n)$, and therefore $d'$ is isotopic to $d_{n-1}$ in $\Sigma_n$.
By Lemma \ref{lem6_1} it follows that we have one of the following three possibilities.
\begin{itemize}
\item[(1)]
$d'$ is isotopic to $d_{n-1}$ in $\Sigma_n\setminus\{x\}$.
\item[(2)]
There exists $g\in\Ker(\theta)$ such that $g([d_i])=[d_i]$ for all $1\le i\le n-1$ and $g([d'])=[d_n]$.
\item[(3)]
There exists $g\in\Ker(\theta)$ such that $g([d_i])=[d_i]$ for all $1\le i\le n-2$, $g([d_{n-1}])=[d_n]$ and $g([d'])=[d_{n-1}]$.
\end{itemize}

If $d'$ is isotopic to $d_{n-1}$ in $\Sigma_n\setminus\{x\}$, then we prove as in the case where $n$ is even that $\varphi=\beta_{0,q}\circ\pi$ if $\varepsilon=1$ and $\varphi=\chi\circ\beta_{0,-q}\circ\pi$ if $\varepsilon=-1$.
Similarly, if there exists $g\in\Ker(\theta)$ such that $g([d_i])=[d_i]$ for all $1\le i\le n-1$ and $g([d'])=[d_n]$, then we prove as in the case where $n$ is even that $\varphi=\ad_{v^{-1}}\circ\gamma_q$ if $\varepsilon=1$ and $\varphi=\ad_{v^{-1}}\circ\chi\circ\gamma_{-q}$ if $\varepsilon=-1$, where $v$ is an element of $\Ker(\pi)\subset A[D_n]$.

Suppose there exists $g\in\Ker(\theta)$ such that $g([d_i])=[d_i]$ for all $1\le i\le n-2$, $g([d_{n -1}])=[d_n]$ and $g([d'])=[d_{n-1}]$.
We have
\begin{gather*}
(\rho_D\circ\varphi)(t_i)=T_{d_i}^\varepsilon T_\partial^q=g^{-1}T_{d_i}^\varepsilon T_\partial^qg\quad\text{for }1\le i\le n-2\,,\\
(\rho_D\circ\varphi)(t_{n-1})=T_{d_{n-1}}^\varepsilon T_\partial^q=g^{-1}T_{d_n}^\varepsilon T_\partial^qg\,,\\
(\rho_D\circ\varphi)(t_n)=T_{d'}^\varepsilon T_\partial^q=g^{-1}T_{d_{n-1}}^\varepsilon T_\partial^qg\,.
\end{gather*}
By Theorem \ref{thm3_5} there exists $v\in\Ker(\pi)\subset A[D_n]$ such that $\rho_D(v)=g$.
Since $\rho_D$ is injective it follows that
\begin{gather*}
\varphi(t_i)=v^{-1}t_i^\varepsilon\Delta_D^{2q}v\quad\text{ for }1\le i\le n-2\,,\\
\varphi(t_{n-1})=v^{-1}t_n^\varepsilon\Delta_D^{2q}v\,,\quad\varphi(t_n)=v^{-1}
t_{n-1}^\varepsilon\Delta_D^{2q}v\,.
\end{gather*}
We conclude that $\varphi=\ad_{v^{-1}}\circ\zeta\circ\gamma_q$ if $\varepsilon=1$ and 
$\varphi=\ad_{v^{-1}}\circ\zeta\circ\chi\circ\gamma_{-q}$ if $\varepsilon=-1$.
\end{proof}


\section{Endomorphisms of $A [D_n] / Z (A [D_n])$}\label{sec7}

\begin{proof}[Proof of Proposition \ref{propA1}]
Let $\Delta$ be the Garside element of $A[D_n]$. 
We set $\kappa = 2$ if $n$ is odd and $\kappa = 1$ if $n$ is even. 
Recall that $Z (A [D_n])$ is the cyclic group generated by $\Delta^{\kappa}$. 
Let $\varphi_Z : A_Z [D_n] \to A_Z [D_n]$ be an endomorphism. 
For each $1 \le i \le n-2$ we define $u_i \in A[D_n]$ by induction on $i$ as follows. 
First, choose any $u_1 \in A[D_n]$ such that $\xi (u_1) = \varphi_Z (t_{Z,1})$. 
Now, assume that $2 \le i \le n-2$ and that $u_{i-1}$ is defined. 
Choose $u_i' \in A [D_n]$ such that $\xi (u_i') = \varphi_Z (t_{Z,i})$. 
Since $\varphi_Z (t_{Z,i-1} t_{Z,i} t_{Z,i-1}) = \varphi_Z (t_{Z, i} t_{Z, i-1} t_{Z, i})$, there exists $k_i \in \Z$ such that $u_{i-1} u_i' u_{i-1} = u_i' u_{i-1} u_i' \Delta^{\kappa k_i}$. 
Then set $u_i = u_i' \Delta^{\kappa k_i}$. 
Note that $\xi (u_i) = \xi (u_i') = \varphi_Z (t_{Z,i})$ and
\[
u_{i-1} u_i u_{i-1} = u_{i-1} u_i' u_{i-1} \Delta^{\kappa k_i} = u_i' u_{i-1} u_i' \Delta^{2 \kappa k_i} = u_i u_{i-1} u_i \,.
\]
Define in the same way $u_{n-1}, u_n \in A[D_n]$ such that $\xi (u_{n-1}) = \varphi_Z (t_{Z, n-1})$, $\xi (u_n) = \varphi_Z (t_{Z, n})$, $u_{n-2} u_{n-1} u_{n-2} = u_{n-1} u_{n-2} u_{n-1}$ and $u_{n-2} u_n u_{n-2} = u_n u_{n-2} u_n$.

Let $i,j \in \{1, \dots, n\}$ be such that $i \neq j$ and $t_i t_j = t_j t_i$. 
We have $\varphi_Z (t_{Z, i} t_{Z, j}) = \varphi_Z (t_{Z, j} t_{Z, i})$, hence there exists $\ell \in \Z$ such that $u_i u_j = u_j u_i \Delta^{\kappa \ell}$. 
Recall the homomorphism $z : A [D_n] \to \Z$ which sends $t_i$ to $1$ for all $1 \le i \le n$. 
Since $z(\Delta) = n (n-1)$, the previous equality implies that
\[
z(u_i) + z(u_j) = z(u_j) + z(u_i) + \kappa \ell n (n-1)\,,
\]
hence $\ell = 0$, and therefore $u_i u_j = u_j u_i$.

By the above we have an endomorphism $\varphi : A [D_n] \to A [D_n]$ which sends $t_i$ to $u_i$ for all $1 \le i \le n$, and this endomorphism is a lift of $\varphi_Z$.
\end{proof}

\begin{proof}[Proof of Theorem \ref{thmA2}]
Let $n \ge 6$. 
Let $\varphi_Z : A_Z [D_n] \to A_Z [D_n]$ be an endomorphism. 
We know from Proposition \ref{propA1} that $\varphi_Z$ admits a lift $\varphi : A [D_n] \to A [D_n]$. 
By Theorem \ref{thm2_3} we have one of the following three possibilities up to conjugation.
\begin{itemize} 
\item[(1)]
$\varphi$ is cyclic.
\item[(2)]
There exist $\psi \in \langle \zeta, \chi \rangle$ and $p, q \in \Z$ such that $\varphi = \psi \circ \beta_{p, q} \circ \pi$.
\item[(3)]
There exist $\psi \in \langle \zeta, \chi \rangle$ and $p \in \Z$ such that $\varphi = \psi \circ \gamma_p$.
\end{itemize}
Clearly, if $\varphi$ is cyclic, then $\varphi_Z$ is cyclic.

Now, we show that the second case cannot occur. 
Suppose there exist $\psi \in \langle \zeta, \chi \rangle$ and $p, q \in \Z$ such that $\varphi = \psi \circ \beta_{p, q} \circ \pi$. 
As ever, we set $\kappa = 2$ if $n$ is odd and $\kappa = 1$ if $n$ is even.
Recall that the center of $A[D_n]$ is generated by $\Delta^\kappa$, where $\Delta$ is the Garside element of $A[D_n]$.
We need to show that $\varphi (\Delta^\kappa) \not\in Z (A[D_n]) = \langle \Delta^\kappa \rangle$, which leads to a contradiction.
Since $\psi \in \Aut (A [D_n])$, we have $\psi (Z (A [D_n])) = Z (A [D_n])$, hence we can assume that $\varphi = \beta_{p, q} \circ \pi$. 
Let $Y = \{t_1, \dots, t_{n-1}\}$ and let $\Delta_Y = \Delta_Y [D_n]$ be the Garside element of $A_Y [D_n]$.
Since  
\begin{gather*}
\Delta = (t_1 \cdots t_{n-2} t_{n-1} t_n t_{n-2} \cdots t_1) \cdots (t_{n-2} t_{n-1} t_n t_{n-2}) (t_{n-1} t_n)\,,\\
\Delta [A_{n-1}]^2 = (s_1 \cdots s_{n-2} s_{n-1}^2 s_{n-2} \cdots s_1) \cdots (s_{n-2} s_{n-1}^2 s_{n-2}) s_{n-1}^2
\end{gather*}
(see Lemma \ref{lemA5} for the second equality), we have $\pi (\Delta) = \Delta [A_{n-1}]^2$, hence
\[
\varphi (\Delta^\kappa) = (\beta_{p, q} \circ \pi) (\Delta^\kappa) = \beta_{p, q} (\Delta [A_{n-1}]^{2 \kappa}) = \Delta_Y^{2 \kappa (1+p n (n-1))} \Delta^{\kappa^2 q n (n-1)}\,.
\]
This element does not belong to $Z (A[D_n]) = \langle \Delta^\kappa \rangle$, because $\kappa (1 + p n (n-1)) \neq 0$ and $\langle \Delta_Y^2 \rangle \cap \langle \Delta^\kappa \rangle = \{ 1 \}$.

Suppose we are in the third case. 
So, there exist $\psi \in \langle \zeta, \chi \rangle$ and $p \in \Z$ such that $\varphi = \psi \circ \gamma_p$. 
We have
\[
\gamma_p (\Delta^\kappa) = \Delta^{\kappa (1+ \kappa p n (n-1))} \in \langle \Delta^\kappa \rangle\,,
\]
hence $\gamma_p$ induces an endomorphism $\gamma_{Z,p} : A_Z [D_n] \to A_Z [D_n]$.
Moreover, for all $1 \le i \le n$,
\[
\gamma_{Z,p} (t_{Z,i}) = \xi (t_i \Delta^{\kappa p}) = \xi (t_i) = t_{Z,i}\,,
\]
hence $\gamma_{Z,p} = \id$.
Clearly, $\psi$ is the lift of an element $\psi_Z \in \langle \zeta_Z, \chi_Z \rangle$, hence $\varphi_Z = \psi_Z \circ \gamma_{Z, p} = \psi_Z$.
\end{proof}

Now, as promised in Section \ref{sec2}, we prove the following.

\begin{lem}\label{lemA6}
Let $n \ge 4$. 
If $n$ is even, then $\langle \zeta_Z, \chi_Z \rangle \cap \Inn (A_Z [D_n]) = \{ \id \}$, and if $n$ is odd, then $\langle \chi_Z \rangle \cap \Inn (A_Z [D_n]) = \{ \id \}$.
\end{lem}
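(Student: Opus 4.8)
The plan is to detect these automorphisms on the abelianization of $A_Z[D_n]$, which is cheap to compute and invisible to inner automorphisms. Let $z : A[D_n] \to \Z$ be the homomorphism sending each $t_i$ to $1$ (the one already used above); since $z(\Delta) = n(n-1)$, we have $z(\Delta^\kappa) = \kappa n(n-1)$, so $z$ descends to a homomorphism $\bar z : A_Z[D_n] \to \Z / N\Z$ with $N = \kappa n(n-1)$, which for $n \ge 4$ satisfies $N \ge 12$. Because $\Z/N\Z$ is abelian, any inner automorphism $\ad_{h} \in \Inn(A_Z[D_n])$ satisfies $\bar z \circ \ad_h = \bar z$. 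Thus, to rule out a given automorphism $\phi$ of $A_Z[D_n]$ being inner, it suffices to exhibit one $t_{Z,i}$ with $\bar z(\phi(t_{Z,i})) \neq \bar z(t_{Z,i})$.

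First I would dispatch the automorphisms that change the abelianization. Since $\chi(t_i) = t_i^{-1}$, we get $\chi_Z(t_{Z,i}) = \xi(t_i^{-1})$ and hence $\bar z(\chi_Z(t_{Z,i})) = -1$, whereas $\bar z(t_{Z,i}) = 1$. If $\chi_Z$ were inner this would force $-1 \equiv 1 \pmod N$, i.e.\ $N \mid 2$, contradicting $N \ge 12$; so $\chi_Z$ is not inner, both for $n$ even and for $n$ odd. For $n$ even the element $\zeta_Z \chi_Z$ is handled identically: $\zeta$ fixes the abelianization, so $(\zeta_Z \chi_Z)(t_{Z,i})$ again has $\bar z$-value $-1$, and the same arithmetic rules it out. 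This disposes of every non-identity element of $\langle \chi_Z \rangle$ in the odd case and of $\chi_Z, \zeta_Z\chi_Z$ in the even case.

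The remaining and genuinely harder element is $\zeta_Z$ itself in the even case, which I expect to be the main obstacle: since $\zeta$ fixes the abelianization, $\bar z(\zeta_Z(t_{Z,i})) = 1 = \bar z(t_{Z,i})$ and the abelianization invariant is blind to it. Here I would argue by lifting. Suppose $\zeta_Z = \ad_{g_Z}$ with $g_Z \in A_Z[D_n]$, and pick $g \in A[D_n]$ with $\xi(g) = g_Z$. For each $i$ we have $\xi(\zeta(t_i)) = \zeta_Z(t_{Z,i}) = \ad_{g_Z}(t_{Z,i}) = \xi(g t_i g^{-1})$, so $\zeta(t_i)(g t_i g^{-1})^{-1} \in \Ker \xi = \langle \Delta^\kappa \rangle$; write $\zeta(t_i) = g t_i g^{-1}\,\Delta^{\kappa a_i}$. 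Applying $z$ and using $z(\zeta(t_i)) = z(g t_i g^{-1}) = 1$ gives $1 = 1 + \kappa a_i n(n-1)$, hence $a_i = 0$ for all $i$. Therefore $\zeta(t_i) = g t_i g^{-1}$ for every $i$, i.e.\ $\zeta = \ad_g$ is inner in $A[D_n]$ — contradicting the fact (Paris \cite{Paris1}) that $\zeta$ is not inner when $n$ is even. This shows $\zeta_Z$ is not inner for $n$ even. Note that for $n$ odd this last step would not yield a contradiction, which is exactly why the statement only claims $\langle \chi_Z \rangle \cap \Inn(A_Z[D_n]) = \{\id\}$ in that case: there $\zeta$ equals $\ad_\Delta$, so $\zeta_Z$ is inner. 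Combining the three computations, every non-identity element of the relevant group is non-inner, which is the assertion.
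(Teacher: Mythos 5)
Your proof is correct, and it differs from the paper's in an interesting way. The paper proves a single, more general statement: any automorphism $\varphi$ of $A[D_n]$ whose induced map $\varphi_Z$ is inner must itself be inner. Its proof lifts $\ad_{g_Z}$ to $A[D_n]$, writes $\varphi(t_i) = g t_i g^{-1}\Delta^{\kappa k_i}$, uses the braid relations along edges of $D_n$ plus connectivity of the graph to force all $k_i$ equal to a common $k$, and then uses surjectivity of $z \circ \varphi$ to get $k=0$; the lemma then follows from the known fact that $\langle \zeta, \chi \rangle \cap \Inn(A[D_n]) = \{\id\}$ (even case) and $\langle \chi \rangle \cap \Inn(A[D_n]) = \{\id\}$ (odd case). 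You instead split the elements. For $\chi_Z$ and $\zeta_Z\chi_Z$ you use the finite abelianization $\bar z : A_Z[D_n] \to \Z/N\Z$, $N = \kappa n(n-1) \ge 12$, on which inner automorphisms act trivially but these elements act by $-1$; this is more elementary and also more self-contained, since it does not invoke the non-innerness of $\chi$ or $\zeta\chi$ in $A[D_n]$ at all (indeed it reproves it). For $\zeta_Z$ in the even case you use the same lifting idea as the paper, but specialized to $\zeta$: because $z(\zeta(t_i)) = 1$ is known outright, applying $z$ kills the correction exponents $a_i$ immediately, so you can skip the braid-relation/connectivity step and the surjectivity argument; you then quote Paris's result that $\zeta$ is not inner for $n$ even, which the paper relies on as well. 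What the paper's route buys in exchange for the extra work is the general lifting statement (and the uniqueness of lifts of $\zeta_Z$, $\chi_Z$), which has independent interest; your route is leaner but element-specific. Your closing remark that $\zeta_Z = \ad_{\xi(\Delta)}$ is inner for $n$ odd, explaining the asymmetry in the statement, is also accurate.
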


\begin{proof}
We first show that, if $\varphi : A [D_n] \to A[D_n]$ is an automorphism such that $\varphi_Z \in \Inn (A_Z [D_n])$, then $\varphi \in \Inn (A [D_n])$. 
Let $\varphi \in \Aut (A[D_n])$ be such that $\varphi_Z \in \Inn (A_Z [D_n])$. 
There exists $g_Z \in A_Z [D_n]$ such that $\varphi_Z (t_{Z,i}) = g_Z t_{Z,i} g_Z^{-1}$ for all $1 \le i \le n$. 
Again, we denote by $\Delta$ the Garside element of $A[D_n]$, and we set $\kappa = 2$ if $n$ is odd and $\kappa = 1$ if $n$ is even. 
Let $g \in A [D_n]$ be such that $\xi (g) = g_Z$. 
For every $1 \le i \le n$, there exists $k_i \in \Z$ such that $\varphi (t_i) = g t_i g^{-1} \Delta^{\kappa k_i}$. 
Let $i, j \in \{1, \dots, n\}$ be such that $\{t_i, t_j\}$ is an edge of $D_n$. 
From the equality $t_i t_j t_i = t_j t_i t_j$ it follows that
\[
g t_i t_j t_i g^{-1} \Delta^{\kappa (2k_i + k_j)} = \varphi (t_i t_j t_i) = \varphi (t_j t_i t_j) = g t_j t_i t_j g^{-1} \Delta^{\kappa (k_i + 2 k_j)} \,,
\]
hence $2 k_i + k_j = k_i + 2 k_j$, and therefore $k_i = k_j$. 
Since $D_n$ is a connected graph, it follows that $k_i = k_j$ for all $i,j \in \{1, \dots, n\}$. 
So, there exists $k \in \Z$ such that $\varphi (t_i) = g t_i g^{-1} \Delta^{\kappa k}$ for all $1 \le i \le n$. 
Recall the homomorphism $z : A [D_n] \to \Z$ which sends $t_i$ to $1$ for all $1 \le i \le n$. 
Since $\varphi$ is an automorphism, we have $\Im (z \circ \varphi) = \Im (z) = \Z$. 
Furthermore, since $z (\Delta) = n (n-1)$, we have $(z \circ \varphi) (t_i) = 1 + \kappa k n (n-1)$ for all $1 \le i \le n$, hence $\Im (z \circ \varphi) = \big( 1 + \kappa k n (n-1) \big) \Z$. 
This implies that $k = 0$, hence $\varphi = \ad_g \in \Inn (A [D_n])$.

Arguing in a similar way we can see that lifts of $\zeta_Z$ and $\chi_Z$ in $\Aut (A [D_n])$ are unique.
Since we know that $\langle \zeta, \chi \rangle \cap \Inn (A[D_n]) = \{ \id \}$ if $n$ is even, and $\langle \chi \rangle \cap \Inn (A[D_n]) = \{ \id \}$ if $n$ is odd, it follows that 
$\langle \zeta_Z, \chi_Z \rangle \cap \Inn (A_Z [D_n]) = \{ \id \}$ if $n$ is even, and $\langle \chi_Z \rangle \cap \Inn (A_Z [D_n]) = \{ \id \}$ if $n$ is odd.
\end{proof}



\end{document}